\numberwithin{equation}{section}
\newtheorem{theorem}{Theorem}
\newtheorem{lemma}{Lemma}
\newtheorem{corollary}{Corollary}
\newtheorem{proposition}{Proposition}
\theoremstyle{definition}
\newtheorem{definition}{Definition}
\newtheorem{example}{Example}
\newtheorem{remark}{Remark}
\crefname{section}{Section}{Sections}
\crefname{appendix}{Appendix}{Appendices}
\crefname{theorem}{Theorem}{Theorems}
\crefname{lemma}{Lemma}{Lemmas}
\crefname{corollary}{Corollary}	{Corollaries}			
\crefname{proposition}{Proposition}{Propositions}	
\crefname{claim}{Claim}{Claims}
\crefname{conjecture}{Conjecture}{Conjectures}			
\crefname{definition}{Definition}{Definitions}
\crefname{problem}{Problem}{Problems}
\crefname{example}{Example}{Examples}
\crefname{remark}{Remark}{Remarks}
\crefname{figure}{Figure}{Figures}
\crefname{footnote}{Footnote}{Footnotes}
\crefname{equation}{}{}
\crefname{enumi}{}{}
\newcommand{\QED}{\hfill \ensuremath{\Box}}
\newcommand{\R}{\mathbb{R}}
\newcommand{\ld}{,\ldots,}
\newcommand{\lld}{,\ldots \ldots,}
\newcommand{\ep}{\varepsilon}
\newcommand{\wt}{\widetilde}
\newcommand{\prn}[1]{\left(#1\right)}
\newcommand{\norm}[1]{\left\|#1\right\|}
\newcommand{\abs}[1]{\left|#1\right|}
\newcommand{\D}{\displaystyle}
\newfont{\bg}{cmr9 scaled\magstep2}
\newcommand{\bigzerol}{\smash{\lower1.0ex\hbox{\bg 0}}}
\DeclareMathOperator{\rank}{rank}
\DeclareMathOperator{\corank}{corank}
\DeclareMathOperator{\ke}{Ker}
\DeclareMathOperator{\codim}{codim}
\begin{document}

% title %%%%%%%%%%%%%%%%%%%%%%%%%%%%%%%%%%%%%%%%%%%%%%%%%%%%%%%%%%%%%%%%%%%%%%%%
\title{Simpliciality of strongly convex problems}

% authors %%%%%%%%%%%%%%%%%%%%%%%%%%%%%%%%%%%%%%%%%%%%%%%%%%%%%%%%%%%%%%%%%%%%%%
\author{Naoki \textsc{Hamada}}
\address{
Artificial Intelligence Laboratory,
Fujitsu Laboratories Ltd.,
Kawasaki 211-8588, Japan\\
RIKEN AIP-Fujitsu Collaboration Center,
RIKEN,
Tokyo 103-0027, Japan}
\email{hamada-naoki@fujitsu.com}

\author{Shunsuke \textsc{Ichiki}}
\address{
Department of Mathematical and Computing Science,
School of Computing,
Tokyo Institute of Technology,
Tokyo 152-8552,
Japan}
\email{ichiki@c.titech.ac.jp}

% dates %%%%%%%%%%%%%%%%%%%%%%%%%%%%%%%%%%%%%%%%%%%%%%%%%%%%%%%%%%%%%%%%%%%%%%%%
%\recdate{2019}{11}{22}% Date of reception
%\revdate{2019}{11}{25}% Date of revision
\subjclass[2010]{Primary 90C25; Secondary 57R45}
\keywords{multiobjective optimization, strongly convex problem, simplicial problem, singularity theory, transversality, generic linear perturbation}

%%%%%%%%%%%%%%%%%%%%%%%%%%%%%%%%%%%%%%%%%%%%%%%%%%%%%%%%%%%%%%%%%%%%%%%%%%%%%%%%
\begin{abstract}
\ A multiobjective optimization problem is $C^r$ simplicial if the Pareto set and the Pareto front are $C^r$ diffeomorphic to a simplex and, under the $C^r$ diffeomorphisms, each face of the simplex corresponds to the Pareto set and the Pareto front of a subproblem, where $0\leq r\leq \infty$.
In the paper titled ``Topology of Pareto sets of strongly convex problems,'' it has been shown that a strongly convex $C^r$ problem is $C^{r-1}$ simplicial under a mild assumption on the ranks of the differentials of the mapping for $2\leq r \leq \infty$.
On the other hand, in this paper, we show that a strongly convex $C^1$ problem is $C^0$ simplicial under the same assumption.
Moreover, we establish a specialized transversality theorem on generic linear perturbations of a strongly convex $C^r$ mapping $(r\geq 2)$.
By the transversality theorem, we also give an application of singularity theory to a strongly convex $C^r$ problem for $2\leq r \leq \infty$.
\end{abstract}
%%%%%%%%%%%%%%%%%%%%%%%%%%%%%%%%%%%%%%%%%%%%%%%%%%%%%%%%%%%%%%%%%%%%%%%%%%%%%%%%
\maketitle
%%%%%%%%%%%%%%%%%%%%%%%%%%%%%%%%%%%%%%%%%%%%%%%%%%%%%%%%%%%%%%%%%%%%%%%%%%%%%%%%
\section{Introduction}\label{sec:intro}
%%%%%%%%%%%%%%%%%%%%%%%%%%%%%%%%%%%%%%%%%%%%%%%%%%%%%%%%%%%%%%%%%%%%%%%%%%%%%%%%
In this paper, $m$ and $n$ are positive integers, and we denote the index set $\set{1\ld m}$ by $M$.

We consider the problem of optimizing several functions simultaneously.
More precisely, let $f: X \to \R^m$ be a mapping, where $X$ is a given arbitrary set.
A point $x \in X$ is called a \emph{Pareto optimum} of $f$ if there does not exist another point $y \in X$ such that $f_i(y) \leq f_i(x)$ for all $i \in M$ and $f_j(y) < f_j(x)$ for at least one index $j \in M$.
We denote the set consisting of all Pareto optimums of $f$ by $X^*(f)$, which is called the \emph{Pareto set} of $f$.
The set $f(X^*(f))$ is called the \emph{Pareto front} of $f$.
The problem of determining $X^*(f)$ is called the \emph{problem of minimizing $f$}.

Let $f = (f_1\ld f_m): X \to \R^m$ be a mapping, where $X$ is a given arbitrary set.
For a non-empty subset $I = \set{i_1\ld i_k}$ of $M$ such that $i_1 < \dots < i_k$, set
\begin{align*}
    f_I = (f_{i_1}\ld f_{i_k}).
\end{align*}
The problem of determining $X^*(f_I)$ is called a \emph{subproblem} of the problem of minimizing $f$.
%In order to define the notion of (weak) simpliciality, we prepare some notions. 
Set
\begin{align*}
    \Delta^{m - 1} &= \Set{(w_1, \dots, w_m) \in \R^m | \sum_{i = 1}^m w_i = 1,\ w_i \geq 0}.
 \end{align*}
We also denote a face of $\Delta^{m - 1}$ for a non-empty subset $I$ of $M$ by
\begin{align*}
    \Delta_I = \set{(w_1, \dots, w_m) \in \Delta^{m - 1} | w_i = 0\ (i \not \in I)}.
\end{align*}

For a $C^r$ manifold $N$ (possibly with corners) and a subset $V$ of $\R^\ell$, a mapping $g:N\to V$ is called a \emph{$C^r$ mapping} (resp., a  \emph{$C^r$ diffeomorphism}) if $g:N\to \R^\ell$ is of class $C^r$ (resp.,  $g:N\to \R^\ell$ is a $C^r$ immersion and $g:N\to V$ is a homeomorphism), where $r\geq 1$.
In this paper, $C^0$ mappings and $C^0$ diffeomorphisms are continuous mappings and homeomorphisms, respectively.

By referring to \cite{Hamada2019}, we give the definition of (weakly) simplicial problems in this paper.
\begin{definition}\label{def:simplicial}
    Let $f = (f_1\ld f_m): X \to \R^m$ be a mapping, where $X$ is a subset of $\R^n$.
    The problem of minimizing $f$ is $C^r$ \emph{simplicial} if there exists a $C^r$ mapping $\Phi: \Delta^{m - 1} \to X^*(f)$ such that both the mappings $\Phi|_{\Delta_I}: \Delta_I \to X^*(f_I)$ and $f|_{X^*(f_I)}: X^*(f_I) \to f(X^*(f_I))$ are $C^r$ diffeomorphisms for any non-empty subset $I$ of $M$, where $0\leq r\leq \infty$.
    The problem of minimizing $f$ is $C^r$ \emph{weakly simplicial}\footnote{In \cite{Hamada2019}, the problem of minimizing $f:X\to \R^m$ is said to be $C^r$ \emph{weakly simplicial} if there exists a $C^r$ mapping $\phi: \Delta^{m - 1} \to f(X^*(f))$ satisfying $\phi(\Delta_I) = f(X^*(f_I))$ for any non-empty subset $I$ of $M$.
    On the other hand, a surjective mapping of $\Delta^{m-1}$ into $X^*(f)$ is important to describe $X^*(f)$. 
    %However, the existence of such a mapping is unclear from the definition in \cite{Hamada2019}.
    %On the other hand, our definition ensures such a mapping is available to describe $X^*(f)$.%, we adopt the definition of weak simpliciality in \cref{def:simplicial}.
Hence, the definition of weak simpliciality in this paper is updated from that in \cite{Hamada2019}.
\label{ftn:weak}
    } if there exists a $C^r$ mapping $\phi: \Delta^{m - 1} \to X^*(f)$ such that $\phi(\Delta_I) = X^*(f_I)$ for any non-empty subset $I$ of $M$,  where $0\leq r\leq \infty$.
\end{definition}
As described in \cite{Hamada2019}, simpliciality is an important property, which can be seen in several practical problems ranging from facility location studied half a century ago~\cite{Kuhn1967} to sparse modeling actively developed today~\cite{Hamada2019}. If a problem is simplicial, then we can efficiently compute a parametric-surface approximation of the entire Pareto set with few sample points~\cite{Kobayashi2019}.

A subset $X$ of $\R^n$ is \emph{convex} if $t x + (1 - t) y \in X$ for all $x, y \in X$ and all $t \in [0, 1]$.
Let $X$ be a convex set in $\R^n$.
A function $f: X \to \R$ is \emph{strongly convex} if there exists $\alpha > 0$ such that
\begin{align*}
    f(t x + (1 - t) y) \leq t f(x) + (1 - t) f(y) - \frac{1}{2} \alpha t (1 - t) \norm{x - y}^2
\end{align*}
for all $x, y \in X$ and all $t \in [0, 1]$, where $\norm{z}$ is the Euclidean norm of $z \in \R^n$.
The constant $\alpha$ is called a \emph{convexity parameter} of the function $f$.
A mapping $f = (f_1\ld f_m): X \to \R^m$ is \emph{strongly convex} if $f_i$ is strongly convex for any $i \in M$.
The problem of minimizing a strongly convex $C^r$ mapping is called the \emph{strongly convex $C^r$ problem}.

In \cite{Hamada2019}, we have the following result for the  simpliciality of strongly convex $C^r$ problems, where $2\leq r\leq \infty$.
\begin{theorem}[\cite{Hamada2019}]\label{thm:main-C2}
    Let $f: \R^n \to \R^m$ be a strongly convex $C^r$ mapping, where $2\leq r\leq \infty$.
%    Then, the problem of minimizing $f$ is $C^{r-1}$ weakly simplicial.
    Then, the problem of minimizing $f$ is $C^{r-1}$ simplicial if the rank of the differential $df_x$ is equal to $m - 1$ for any $x \in X^*(f)$.
\end{theorem}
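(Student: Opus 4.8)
The plan is to construct the required $C^{r-1}$ map out of weighted-sum scalarization. For $w=(w_1\ld w_m)\in\Delta^{m-1}$ put $f_w=\sum_{i\in M}w_i f_i$. Because every $f_i$ is strongly convex and $\sum_i w_i=1$ forces at least one $w_i>0$, the sum $f_w$ is again strongly convex, so it has a unique global minimizer $x(w)\in\R^n$; I would set $\Phi(w)=x(w)$. Uniqueness of this minimizer already does much of the work. On the one hand it upgrades weak optimality to optimality: if some $y$ satisfied $f_i(y)\le f_i(x(w))$ for all $i$ with strict inequality somewhere, then $f_w(y)\le f_w(x(w))$, and uniqueness would force $y=x(w)$, contradicting the strict inequality; hence $\Phi(w)\in X^*(f)$. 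On the other hand, the midpoint estimate coming from strong convexity shows $f$ is injective on each subproblem Pareto set: if $x\neq y$ lie in $X^*(f_I)$ with $f_I(x)=f_I(y)$, the midpoint $z=(x+y)/2$ satisfies $f_i(z)<f_i(x)$ for all $i\in I$, dominating $x$, which is impossible.

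Next I would show that $\Phi$ is a $C^{r-1}$ immersion onto $X^*(f)$. Smoothness follows from the implicit function theorem applied to the stationarity equation $\sum_{i\in M}w_i\nabla f_i(x)=0$: its $x$-derivative is the Hessian $H(w)=\sum_{i\in M}w_i\nabla^2 f_i(x(w))$, which is positive definite by strong convexity and hence invertible, so $w\mapsto x(w)$ is $C^{r-1}$. Surjectivity onto $X^*(f)$ is the convex multiplier rule: any $x\in X^*(f)$ admits $w\in\Delta^{m-1}$ with $\sum_i w_i\nabla f_i(x)=0$, and convexity makes such a stationary point the global minimizer of $f_w$, so $x=\Phi(w)$. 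This is where the rank hypothesis enters. Writing $(df_x)^{\top}\colon\R^m\to\R^n$, $\lambda\mapsto\sum_i\lambda_i\nabla f_i(x)$, the assumption $\rank df_x=m-1$ says $\ker(df_x)^{\top}$ is one-dimensional on $X^*(f)$. Two weights with $\Phi(w)=\Phi(w')=x$ both lie in this kernel, hence are proportional, hence equal on $\Delta^{m-1}$, so $\Phi$ is injective. Differentiating the stationarity equation gives, for $v$ tangent to $\Delta^{m-1}$ (so $\sum_i v_i=0$), the formula $d\Phi_w(v)=-H(w)^{-1}(df_{x(w)})^{\top}v$; its vanishing forces $(df_{x(w)})^{\top}v=0$, so $v\in\ker(df_{x(w)})^{\top}=\R\,w$, and $\sum_i v_i=0$ with $\sum_i w_i=1$ gives $v=0$. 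Thus $\Phi$ is an immersion, and since $\Delta^{m-1}$ is compact and $X^*(f)\subseteq\R^n$ is Hausdorff, the continuous bijection $\Phi$ is a homeomorphism, hence a $C^{r-1}$ diffeomorphism onto $X^*(f)$.

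To obtain simpliciality I must match faces with subproblems. For $w\in\Delta_I$ one has $f_w=\sum_{i\in I}w_i f_i$, so the arguments above, applied verbatim to $f_I$, give $\Phi(\Delta_I)=X^*(f_I)$ and make $\Phi|_{\Delta_I}$ a $C^{r-1}$ diffeomorphism onto $X^*(f_I)$, provided the rank condition holds along $X^*(f_I)$. The observation that makes this self-contained is the inclusion $X^*(f_I)\subseteq X^*(f)$, valid for strongly convex $f$: if $x\in X^*(f_I)$ were dominated in the full problem by some $y$, then either $y$ would already dominate $x$ within $f_I$, or $y$ would agree with $x$ on all of $I$, whence uniqueness of the minimizer of the scalarization realizing $x$ would force $y=x$; both are impossible. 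Given this inclusion, $\rank df_x=m-1$ holds on $X^*(f_I)$, and a relation-counting argument promotes it to $\rank d(f_I)_x=\card{I}-1$ there, since an extra relation among $\{\nabla f_i(x)\}_{i\in I}$ would drop the rank of the full $df_x$ below $m-1$. Finally the front map $f|_{X^*(f_I)}$ is injective by the midpoint estimate and is an immersion, because $d(f_I\circ\Phi|_{\Delta_I})_w(v)=-d(f_I)_{x}H(w)^{-1}(d(f_I)_{x})^{\top}v$ vanishes only for $v=0$ by positive-definiteness of $H(w)$ and the same kernel computation; compactness again upgrades it to a $C^{r-1}$ diffeomorphism onto $f(X^*(f_I))$.

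The step I expect to be the genuine obstacle is exactly the propagation of the hypothesis to subproblems: the rank condition is assumed only on $X^*(f)$, whereas the face conditions concern $X^*(f_I)$, which a priori need not sit inside $X^*(f)$. Everything hinges on the inclusion $X^*(f_I)\subseteq X^*(f)$ and on the relation-counting that turns the full-problem rank into the subproblem rank; establishing these cleanly, together with checking that $\Phi$ is compatible with the manifold-with-corners structure of $\Delta^{m-1}$ across faces of every dimension, is where the real care lies, while the scalarization, the implicit function theorem, and the compactness arguments are routine.
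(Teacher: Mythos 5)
Your proposal is correct and takes essentially the same approach as the paper: your $\Phi$ is the scalarization map $x^*$ of \cref{eq:map}, and your surjectivity argument (multiplier rule), injectivity argument (one-dimensionality of $\ker (df_x)^{\top}$ under the rank hypothesis), compactness argument, and the face-matching step $\Phi(\Delta_I)=X^*(f_I)$ with the inclusion $X^*(f_I)\subset X^*(f)$ and the rank-propagation $\rank d(f_I)_x = \card{I}-1$ coincide with what the paper does in \cref{sec:mainproof} and what the cited proof in \cite{Hamada2019} does. The implicit-function-theorem step supplying $C^{r-1}$ smoothness of $x^*$ (via positive definiteness of the Hessian) is precisely the ingredient that requires $r\geq 2$, which is why the paper must argue differently in the $C^1$ case.
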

We give the following remark on \cref{thm:main-C2}.
\begin{remark}\label{rem:weak}
    It is shown that if we remove the assumption on the rank of $df_x$ in \cref{thm:main-C2}, then the problem becomes $C^{r-1}$ weakly simplicial in the sense of \cite{Hamada2019} (for the definition of weak simpliciality in the sense of \cite{Hamada2019}, see also \cref{ftn:weak} in this paper).
    In this paper, we show that the problem becomes $C^{r-1}$ weakly simplicial in the sense of \cref{def:simplicial} (for the result, see \cref{thm:weakly_ simplicial} in \cref{sec:weak}).
\end{remark}
As in \cite{Hamada2019}, the assumption $r \geq 2$ is essentially used in the proof of \cref{thm:main-C2}.
It is difficult to apply the same method as in the proof of \cref{thm:main-C2} to strongly convex $C^1$ mappings.
Hence, as the first purpose of this paper, we give a theorem in the case $r=1$ as follows:
\begin{theorem}\label{thm:main-C1}
    Let $f: \R^n \to \R^m$ be a strongly convex $C^1$ mapping.
    Then, the problem of minimizing $f$ is $C^0$ weakly simplicial.
    Moreover, this problem is $C^0$ simplicial if the rank of the differential $df_x$ is equal to $m - 1$ for any $x \in X^*(f)$.
\end{theorem}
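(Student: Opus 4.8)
The plan is to build the parametrization $\Phi$ of the Pareto set directly from the weighted-sum scalarization and to establish its regularity by hand, since the implicit-function-theorem argument behind \cref{thm:main-C2} is unavailable for merely $C^1$ data. For $w=(w_1\ld w_m)\in\Delta^{m-1}$ write $f_w=\sum_{i=1}^m w_i f_i$. Each $f_w$ is strongly convex with convexity parameter at least $\alpha_0:=\min_i\alpha_i>0$, where $\alpha_i$ is a convexity parameter of $f_i$; hence $f_w$ is coercive and strictly convex and has a unique global minimizer, which I define to be $\Phi(w)$. The quadratic-growth inequality for a strongly convex $g$ with parameter $\alpha$ and minimizer $x^\star$,
\begin{align*}
g(y)-g(x^\star)\geq \tfrac{\alpha}{2}\norm{y-x^\star}^2\qquad(y\in\R^n),
\end{align*}
is the workhorse throughout.

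First I would prove that $\Phi$ is continuous, which is the heart of the matter. Given $w^{(k)}\to w$ in $\Delta^{m-1}$, set $x_k=\Phi(w^{(k)})$ and $x=\Phi(w)$. Applying the growth inequality to $g=f_{w^{(k)}}$ at $x_k$ with $y=x$ gives $\tfrac{\alpha_0}{2}\norm{x-x_k}^2\leq f_{w^{(k)}}(x)-f_{w^{(k)}}(x_k)$, and I would split the right-hand side as
\begin{align*}
\prn{f_{w^{(k)}}(x)-f_{w}(x)}+\prn{f_{w}(x)-f_{w}(x_k)}+\prn{f_{w}(x_k)-f_{w^{(k)}}(x_k)}.
\end{align*}
The middle term is $\leq 0$ because $x$ minimizes $f_w$; the first term tends to $0$ since $w^{(k)}\to w$ with $x$ fixed; the third tends to $0$ once the $x_k$ are known to be bounded, for then $f_i(x_k)$ stays bounded. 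Boundedness of $\{x_k\}$ follows from the same growth inequality (with $y=0$) and the uniform bounds $f_{w^{(k)}}(x_k)\geq\min_i\min_z f_i(z)$ and $f_{w^{(k)}}(0)\leq\max_i f_i(0)$. Hence $\norm{x-x_k}\to 0$, so $\Phi$ is a $C^0$ mapping.

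Next I would identify the image of each face. Since $\Phi(w)$ is the \emph{unique} minimizer of $f_w$, any $y\neq\Phi(w)$ with $f_i(y)\leq f_i(\Phi(w))$ for all $i$ and one strict inequality would give $f_w(y)\leq f_w(\Phi(w))$, contradicting uniqueness; thus $\Phi(w)\in X^*(f)$ for \emph{every} $w$, and the same reasoning on a face yields $\Phi(\Delta_I)\subseteq X^*(f_I)$ and in particular $X^*(f_I)\subseteq X^*(f)$. The reverse inclusion $X^*(f_I)\subseteq\Phi(\Delta_I)$ is the classical supporting-hyperplane characterization of Pareto optima of a convex problem, exactly as in \cite{Hamada2019}. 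Therefore $\Phi(\Delta_I)=X^*(f_I)$ for every non-empty $I\subseteq M$, and since $\Phi$ is a $C^0$ mapping this already establishes that the problem is $C^0$ weakly simplicial in the sense of \cref{def:simplicial}.

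Finally, assume $\rank df_x=m-1$ on $X^*(f)$. For $x=\Phi(w)\in X^*(f_I)$ with $w\in\Delta_I$ one has $\sum_i w_i\nabla f_i(x)=0$, i.e. $w\in\ke df_x^{\top}$, which is $1$-dimensional; restricting to the coordinates in $I$ gives $\ke (df_I)_x^{\top}=\R\,(w|_I)$, so $\rank (df_I)_x=\card{I}-1$ on $X^*(f_I)$. If $\Phi(u)=\Phi(u')=x$ for $u,u'\in\Delta_I$, then $u|_I$ and $u'|_I$ lie on the line $\ke (df_I)_x^{\top}$ and both satisfy $\sum_{i\in I}c_i=1$; a line through the origin meets that hyperplane in at most one point, so $u=u'$ and $\Phi|_{\Delta_I}$ is injective. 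Likewise, if $f(x)=f(x')$ with $x,x'\in X^*(f_I)$ and $x=\Phi(w)$, then $f_w(x')=f_w(x)$ forces $x'=x$ by uniqueness, so $f|_{X^*(f_I)}$ is injective. As $\Delta_I$ and $X^*(f_I)=\Phi(\Delta_I)$ are compact, these continuous injections are homeomorphisms onto their images, i.e. $C^0$ diffeomorphisms, which is exactly $C^0$ simpliciality. The main obstacle is the continuity step: with only $C^1$ data there is no Hessian to feed into the implicit function theorem as in \cref{thm:main-C2}, so differentiability of $\Phi$ is out of reach, and it is the strong-convexity growth estimate that still delivers $C^0$ regularity.
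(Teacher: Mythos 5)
Your proof is correct, and its skeleton is the same as the paper's: the paper parametrizes $X^*(f)$ by the same weighted-sum minimizer map (its $x^*$ in \cref{eq:map} is your $\Phi$), obtains surjectivity onto $X^*(f_I)$ from the classical scalarization/KKT characterization of Pareto optima (\cref{thm:necessary}), obtains injectivity under the rank hypothesis from one-dimensionality of the left kernel of $df_x$ together with the normalization $\sum_i w_i = 1$, and concludes with the same compactness argument (a continuous bijection from a compact space to a Hausdorff space is a homeomorphism). The genuine difference is the continuity step, which you rightly call the heart of the matter. The paper proves a quantitative modulus (\cref{thm:ine}), namely $\norm{x^*(w)-x^*(\wt{w})} \leq \sqrt{\frac{K_0}{\alpha_0}\sum_{i=1}^m\abs{w_i-\wt{w}_i}}$, whose constant $K_0$ bounds the oscillation of the $f_i$ over $X^*(f)\times X^*(f)$; this forces the paper to first establish compactness of $X^*(f)$ (\cref{thm:compact}), which in turn rests on $X^*(f)=X^{\mathrm w}(f)$ for strongly convex mappings (\cref{thm:weak}) and closedness of the weak Pareto set (\cref{thm:closed}). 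Your sequential argument --- the growth inequality at $x_k$, the three-term splitting, and boundedness of $\{x_k\}$ read off from the growth inequality at $y=0$ --- bypasses all of that machinery, at the price of yielding no explicit modulus of continuity. Two smaller substitutions also differ: you transfer the rank condition to subproblems by identifying the left kernel of $(df_I)_x$ with $\R\,(w|_I)$, whereas the paper combines the row-deletion bound $\rank(df_I)_x\geq k-1$ with criticality of Pareto optima (\cref{thm:critical}); and you prove injectivity of $f|_{X^*(f_I)}$ in one line from uniqueness of the scalarized minimizer, whereas the paper quotes \cref{thm:lem-injective-strongly} from \cite{Hamada2019c}. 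Both replacements are valid, so your write-up is self-contained at points where the paper leans on cited lemmas.
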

In \cite{Hamada2019}, as an application of singularity theory to a strongly convex problem, we have the following result (\cref{thm:H_generic}) on generic linear perturbations of a strongly convex $C^r$ mapping $(2\leq r \leq \infty)$.
Here, note that strong convexity is preserved under linear perturbations (see \cref{thm:preserve_strong} in \cref{sec:app}).
Let $\mathcal{L}(\R^n, \R^m)$ be the space consisting of all linear mappings of $\R^n$ into $\R^m$.
In what follows we will regard $\mathcal{L}(\R^n, \R^m)$ as the Euclidean space $(\R^n)^m$ in the obvious way.

\begin{theorem}[\cite{Hamada2019}]\label{thm:H_generic}
    Let $f: \R^n \to \R^m$ $(n \geq m)$ be a strongly convex $C^r$ mapping, where $2\leq r \leq \infty$.
    If $n - 2m + 4 > 0$, then there exists a Lebesgue measure zero subset $\Sigma $ of $\mathcal{L}(\R^n, \R^m)$ such that for any $\pi \in \mathcal{L}(\R^n, \R^m) - \Sigma$, the problem of minimizing $f + \pi: \R^n \to \R^m$ is $C^{r-1}$ simplicial.
\end{theorem}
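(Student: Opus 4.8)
The plan is to deduce the statement from \cref{thm:main-C2} by showing that a generic linear perturbation forces the differential to have rank exactly $m-1$ along the Pareto set. First I would note that, by \cref{thm:preserve_strong}, the mapping $f+\pi$ is strongly convex and of class $C^r$ for every $\pi \in \mathcal{L}(\R^n,\R^m)$, so \cref{thm:main-C2} applies to $f+\pi$ the moment we know that $\rank d(f+\pi)_x = m-1$ for all $x \in X^*(f+\pi)$. The inequality $\rank \leq m-1$ is automatic: writing $g = f+\pi$, every Pareto optimum $x$ of the differentiable mapping $g$ admits, by a theorem of the alternative applied to the absence of a common descent direction, a weight $w \in \Delta^{m-1}$ with $w \neq 0$ and $\sum_{i=1}^m w_i \nabla g_i(x) = 0$; hence $w^{\top} dg_x = 0$ and $\rank dg_x \leq m-1$. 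Thus the whole problem reduces to choosing $\pi$ outside a Lebesgue-null set so that the locus $\{x : \rank d(f+\pi)_x \leq m-2\}$ is empty.

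To control that locus I would introduce the $C^{r-1}$ evaluation map
\begin{align*}
F\colon \R^n \times \mathcal{L}(\R^n,\R^m) \to \mathcal{L}(\R^n,\R^m), \qquad F(x,\pi) = df_x + \pi,
\end{align*}
and observe that, for each fixed $x$, the map $\pi \mapsto df_x + \pi$ is a translation, so $F$ is a submersion and therefore transverse to every submanifold of the target. For $2 \leq j \leq m$, let $\Sigma^j \subset \mathcal{L}(\R^n,\R^m)$ be the corank-$j$ stratum (the linear maps of rank $m-j$), a smooth submanifold of codimension $j(n-m+j)$. Transversality then makes each preimage $F^{-1}(\Sigma^j)$ a $C^{r-1}$ submanifold of $\R^n \times \mathcal{L}(\R^n,\R^m)$ of the same codimension, so that $\dim F^{-1}(\Sigma^j) = n + nm - j(n-m+j)$.

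The key computation is the dimension count. Since $j \mapsto j(n-m+j)$ is strictly increasing for $j \geq 1$ (using $n \geq m$), the hypothesis $n - 2m + 4 > 0$, which is precisely $2(n-m+2) > n$, yields $j(n-m+j) > n$ for all $j \geq 2$; hence $\dim F^{-1}(\Sigma^j) < nm = \dim \mathcal{L}(\R^n,\R^m)$. Projecting along $p\colon \R^n \times \mathcal{L}(\R^n,\R^m)\to \mathcal{L}(\R^n,\R^m)$, the image $\Sigma_j := p\bigl(F^{-1}(\Sigma^j)\bigr)$ is the $C^1$ image of a $\sigma$-compact manifold of dimension strictly below $nm$, hence Lebesgue-null. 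Taking $\Sigma = \bigcup_{j=2}^m \Sigma_j$ (a finite union, still null), any $\pi \notin \Sigma$ satisfies $df_x + \pi \notin \Sigma^j$ for all $x$ and all $j \geq 2$, i.e.\ $\rank(df_x+\pi) \geq m-1$ everywhere; combined with the automatic reverse inequality on the Pareto set, the rank is exactly $m-1$ there, and \cref{thm:main-C2} closes the argument.

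The main obstacle, and the reason a specialized transversality theorem is worth isolating, is handling the noncompact source $\R^n$ while keeping the conclusion measure-theoretic in the finite-dimensional parameter space $\mathcal{L}(\R^n,\R^m)$: one must express $F^{-1}(\Sigma^j)$ as a countable union of compact pieces and invoke the fact that a $C^1$ map sends a manifold of smaller dimension to a null set, which is exactly where the regularity $r \geq 2$ (so that $F$ is at least $C^1$) enters at its minimal threshold. A secondary point requiring care is verifying that the corank loci $\Sigma^j$ genuinely have codimension $j(n-m+j)$ and that it is their top stratum $\Sigma^2$ which produces the sharp bound $n-2m+4>0$; once these are settled, the remainder is the routine submersion-plus-Sard packaging.
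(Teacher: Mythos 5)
Your proposal is correct, but it takes a genuinely different route from the paper's. The paper cites \cref{thm:H_generic} from \cite{Hamada2019} and proves the refinement \cref{thm:maingeneric} by the following architecture: form the parametrized $1$-jet extension $\Gamma(x,\pi)=j^1(f+\pi)(x)$ into $J^1(U,\R^m)$, prove $\Gamma$ is transverse to the corank strata $\Sigma^k(U,\R^m)$ (\cref{thm:transverse}), invoke the parametric transversality lemma (\cref{thm:basic}) to get transversality of $j^1(f+\pi)$ for almost every $\pi$, and then use the dimension count $\codim\Sigma^2(\R^n,\R^m)=2(n-m+2)>n$ to conclude that transversality forces the corank $\geq 2$ locus to be empty (\cref{thm:transversecoro}); \cref{thm:main-C2} finishes. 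You bypass both the jet space and the parametric transversality lemma: because the \emph{entire} Jacobian is being perturbed, your evaluation map $(x,\pi)\mapsto df_x+\pi$ is a submersion into the matrix space $\mathcal{L}(\R^n,\R^m)$, so you can pull back the corank strata there directly and project, needing only the easy half of Sard's theorem (a $C^1$ image of a second-countable manifold of dimension $<nm$ is Lebesgue null); the dimension count is the same one, your rank $\leq m-1$ step on the Pareto set is the paper's \cref{thm:critical} (equivalently \cref{thm:necessary}), and the closing appeal to \cref{thm:preserve_strong} and \cref{thm:main-C2} is identical. What each approach buys: yours is more elementary and self-contained for the statement as given, and it makes transparent that strong convexity plays no role in the genericity step when all $m$ components are perturbed. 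The paper's heavier jet-transversality machinery is what permits the refinement to the restricted perturbation space $\mathcal{L}(\R^n,\R^m)_s$ of \cref{thm:maingeneric}: there your key observation (``a submersion is transverse to every submanifold'') fails, since translations by $\pi$ no longer span all matrix directions, and transversality must instead be recovered from the nonsingularity of the Hessian $H(f_1)_{\wt{x}}$ of the unperturbed component (\cref{thm:hess_strong}), which is precisely the content of \cref{thm:transverse}.
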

In \cref{thm:H_generic}, in order to make a given strongly convex $C^r$ problem simplicial, linear perturbations of all functions $f_1\ld f_m$ are considered, where $f_1\ld f_m$ are the components of $f$.
On the other hand, as the second purpose of this paper, we show that it is sufficient to consider linear perturbations of only $m - 1$ functions (see \cref{thm:maingeneric}).

Let $s$ be an arbitrary integer satisfying $1 \leq s \leq m$.
Set
\begin{align*}
    \mathcal{L}(\R^n, \R^m)_s = \set{(\pi_1\ld \pi_m)\in \mathcal{L}(\R^n, \R^m) | \pi_s = 0}.
\end{align*}
\begin{theorem}\label{thm:maingeneric}
    Let $f: \R^n \to \R^m$ $(n \geq m)$ be a strongly convex $C^r$ mapping, where  $2\leq r \leq \infty$.
    Let $s$ be an arbitrary integer satisfying $1 \leq s \leq m$.
    If $n - 2m + 4 > 0$, then there exists a Lebesgue measure zero subset $\Sigma $ of $\mathcal{L}(\R^n, \R^m)_s$ such that for any $\pi \in \mathcal{L}(\R^n, \R^m)_s - \Sigma$, the problem of minimizing $f + \pi: \R^n \to \R^m$ is $C^{r-1}$ simplicial.
\end{theorem}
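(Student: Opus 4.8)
The plan is to reduce the statement to a genericity statement about the rank of the differential on the Pareto set, and then to establish that rank condition by a transversality argument in which only the $m-1$ perturbations $(\pi_i)_{i\neq s}$ are available. Since $\pi$ is linear, $f+\pi$ is again a strongly convex $C^r$ mapping (strong convexity is preserved under linear perturbations), so \cref{thm:main-C2} applies to $f+\pi$. It therefore suffices to exhibit a Lebesgue measure zero set $\Sigma\subseteq\mathcal{L}(\R^n,\R^m)_s$ such that $\rank d(f+\pi)_x=m-1$ for every $x\in X^*(f+\pi)$ whenever $\pi\notin\Sigma$. At any Pareto optimum the first-order condition furnishes weights $w\in\Delta^{m-1}$ with $\sum_{i} w_i\,\nabla(f_i+\pi_i)(x)=0$, so the $m$ gradients are dependent and $\rank d(f+\pi)_x\le m-1$ holds automatically; the whole problem is thus to rule out, for generic $\pi$, a Pareto point at which the rank drops to $m-2$ or below.

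To organize this, I would stratify $\mathcal{L}(\R^n,\R^m)$ by rank, writing $\Sigma_k$ for the locus of rank at most $k$, and consider the map $x\mapsto d(f+\pi)_x$. By the previous paragraph the Pareto set maps into $\Sigma_{m-1}$, and the goal is that its image avoid $\Sigma_{m-2}$. A direct computation gives $\codim_{\Sigma_{m-1}}\Sigma_{m-2}=n-m+3$, while the Pareto set, being covered by $\Delta^{m-1}$ through $w\mapsto x(w)$, has dimension at most $m-1$. Mirroring the proof of \cref{thm:H_generic}, I would invoke the specialized transversality theorem established in this paper to make the above map transverse to the rank strata for generic linear perturbation; the hypothesis $n-2m+4>0$, i.e.\ $m-1<n-m+3$, then forces the transverse intersection with $\Sigma_{m-2}$ to be of negative dimension, hence empty, outside a measure zero set of $\pi$.

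The one genuinely new point, compared with \cref{thm:H_generic}, is that restricting to $\mathcal{L}(\R^n,\R^m)_s$ freezes the $s$-th row $\nabla f_s(x)$, so I must verify that the surviving perturbations $(a_i)_{i\neq s}$, where $a_i=\nabla\pi_i$, still surject onto the normal space of $\Sigma_{m-2}$ inside $\Sigma_{m-1}$ at each relevant matrix $A=d(f+\pi)_x$ of rank $m-2$. There $\operatorname{coker}A$ is two-dimensional, and perturbing the $i$-th row by $a_i$ realizes exactly $\operatorname{Hom}(\ke A,\R\,\overline{e_i})$, so the indices $i\neq s$ jointly realize $\operatorname{Hom}(\ke A,\operatorname{span}\{\overline{e_i}:i\neq s\})$; the vectors $\{\overline{e_i}:i\neq s\}$ span $\operatorname{coker}A$ \emph{unless} $e_s$ lies in the left kernel of $A$, that is, unless $\nabla f_s(x)=0$. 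Hence at every Pareto point with $\nabla f_s(x)\neq0$ the restricted perturbations already produce the transversality needed above, which is the precise sense in which the $s$-th perturbation is redundant on the Pareto set. The sole exception $\nabla f_s(x)=0$ is the unique minimizer $x^{*}_s$ of the strongly convex $f_s$, a point independent of $\pi$ because $\pi_s=0$; there the $s$-th row of $A$ vanishes and $\rank A=m-1$ reduces to linear independence of the $m-1$ vectors $\nabla f_i(x^{*}_s)+a_i$ ($i\neq s$) in $\R^n$, which holds for all $(a_i)_{i\neq s}$ off a measure zero algebraic set. Deleting this set together with the null set from the transversality step gives the desired $\Sigma$, and \cref{thm:main-C2} then yields $C^{r-1}$ simpliciality of $f+\pi$.

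The hard part will be this last surjectivity verification: making rigorous that omitting the $s$-th perturbation does not destroy transversality to the rank strata along the Pareto set, which itself moves with $\pi$. Concretely, the difficulty is to establish surjectivity of the restricted perturbation map onto the normal space of $\Sigma_{m-2}$ uniformly over the Pareto set, and to feed this into the specialized transversality theorem; the enabling observation is that the omitted cokernel direction $\overline{e_s}$ is recovered from the remaining ones exactly because, on the Pareto set, $\nabla f_s$ is tied to the other gradients by the weight relation whenever $\nabla f_s\neq0$. Handling the single exceptional point $x^{*}_s$ separately, as above, completes the scheme.
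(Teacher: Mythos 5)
Your reduction (strong convexity is preserved under linear perturbation, then force $\rank d(f+\pi)_x=m-1$ on the Pareto set and quote \cref{thm:main-C2}) is exactly the paper's, and your overall scheme is sound; but you reach the key transversality by a genuinely different mechanism. The paper proves the rank statement on \emph{all} of $\R^n$ (\cref{thm:transversecoro}), via the specialized transversality result \cref{thm:transverse}: with $s=1$ say, the parametrized jet map $\Gamma(x,\pi)=j^1(f+\pi)(x)$ is transverse to every corank stratum $\Sigma^k(U,\R^m)$ because its Jacobian contains, besides the identity blocks coming from the perturbations of the rows $i\neq s$, the Hessian block $H(f_s)_x$, which has rank $n$ by strong convexity (\cref{thm:hess_strong}); thus moving the base point $x$ supplies precisely the jet directions that the frozen perturbation $\pi_s=0$ cannot, uniformly at \emph{every} $x$, and \cref{thm:basic} together with the ambient count $\codim\Sigma^k(\R^n,\R^m)\geq 2(n-m+2)>n$ kills the rank-drop locus outright, with no case analysis. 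Your mechanism is first-order instead: at a matrix $A$ whose $s$-th row is nonzero, the restricted row perturbations alone surject onto the normal space $\operatorname{Hom}(\ke A,\operatorname{coker}A)$ (your cokernel-spanning computation is correct), and the single exceptional point --- the minimizer of $f_s$, which is $\pi$-independent because $\pi_s=0$ --- is handled by an elementary measure-zero argument on the $m-1$ remaining gradients. This costs a case split but uses less of strong convexity (only that $\nabla f_s$ vanishes at a single point), so it would generalize beyond the strongly convex setting; the paper's Hessian argument avoids any exceptional locus and yields a clean standalone transversality theorem. Two points in your write-up do need repair, both within your own framework: (i) the avoidance count should be made ambiently --- transversality of $x\mapsto d(f+\pi)_x$ on the open set $\{x:\nabla f_s(x)\neq 0\}$ to strata of codimension at least $2(n-m+2)>n$ forces empty preimage --- not via ``the Pareto set has dimension at most $m-1$,'' since $X^*(f+\pi)$ is a priori only a continuous (H\"older) image of $\Delta^{m-1}$, whose dimension is not controlled before the rank condition is established; fortunately the ambient count requires exactly the same hypothesis $n-2m+4>0$. (ii) The loci ``rank at most $k$'' are singular algebraic varieties, not submanifolds, so the transversality argument must be run stratum by stratum on the rank-exactly-$r$ strata for $r\leq m-2$ (this is what the paper's manifolds $\Sigma^k(U,\R^m)$ accomplish), after which your normal-space computation applies verbatim at each stratum.
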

In this paper, in order to prove \cref{thm:maingeneric}, we also give a specialized transversality theorem on generic linear perturbations of a strongly convex mapping (see \cref{thm:transverse} in \cref{sec:app}).
Hence, \cref{thm:maingeneric} is also an application of singularity theory to a strongly convex problem.

The remainder of this paper is organized as follows.
In \cref{sec:example}, some examples of (weakly) simplicial problems and remarks on \cref{thm:main-C1,thm:maingeneric} are presented.
By lemmas prepared in \cref{sec:pre}, we prove \cref{thm:main-C1} in \cref{sec:mainproof}.
Moreover, in \cref{sec:app}, preliminaries for the proof of \cref{thm:maingeneric} are given, where the specialized transversality theorem (\cref{thm:transverse}) is shown.
By the transversality theorem, we show \cref{thm:maingeneric} in \cref{sec:maingenericproof}.
\cref{sec:appendix} is an appendix for \cref{rem:weak} and \cref{thm:norm_strong} (for \cref{thm:norm_strong}, see \cref{sec:example}). 
%%%%%%%%%%%%%%%%%%%%%%%%%%%%%%%%%%%%%%%%%%%%%%%%%%%%%%%%%%%%%%%%%%%%%%%%%%%%%%%%
\section{Examples of (weakly) simplicial problems and remarks on \texorpdfstring{\cref{thm:main-C1,thm:maingeneric}}{Theorems 2 and 4}}\label{sec:example}
%%%%%%%%%%%%%%%%%%%%%%%%%%%%%%%%%%%%%%%%%%%%%%%%%%%%%%%%%%%%%%%%%%%%%%%%%%%%%%%%
First, we give some examples of (weakly) simplicial problems.
In order to show given mappings are strongly convex, we prepare \cref{thm:norm_strong}, which is a well-known result.
For the sake of readers' convenience, the proof of \cref{thm:norm_strong} is given in \cref{sec:norm_strong}.

Let $X$ be a convex subset of $\R^n$.
A function $f: X \to \R$ is said to be \emph{convex} if
\begin{align*}
    f(t x + (1 - t) y) \leq t f(x) + (1 - t) f(y)
\end{align*}
for all $x, y \in X$ and all $t \in [0, 1]$.
\begin{lemma}%[{\cite[Theorem~2.1.11 (p.~65)]{Nesterov2004}}]
\label{thm:norm_strong}
    Let $X$ be a convex subset of $\R^n$.
    Then, a function $f: X \to \R$ is strongly convex with a convexity parameter $\alpha > 0$ if and only if the function $g: X \to \R$ defined by $g(x) = f(x) - \frac{\alpha}{2} \norm{x}^2$ is convex.
\end{lemma}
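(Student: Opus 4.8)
The plan is to substitute the definition $g(x) = f(x) - \frac{\alpha}{2}\norm{x}^2$ directly into the convexity inequality for $g$ and show, by a single algebraic identity, that the resulting inequality is term-by-term identical to the strong convexity inequality for $f$ with the same parameter $\alpha$. Because the whole argument is a chain of reversible rearrangements, the two implications of the ``if and only if'' will be established simultaneously.

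Concretely, I would first write out what convexity of $g$ means: for all $x, y \in X$ and all $t \in [0,1]$,
\begin{align*}
    f(tx+(1-t)y) - \frac{\alpha}{2}\norm{tx+(1-t)y}^2 \leq t f(x) + (1-t) f(y) - \frac{\alpha}{2}\prn{t\norm{x}^2 + (1-t)\norm{y}^2}.
\end{align*}
Isolating the norm terms, this is equivalent to
\begin{align*}
    f(tx+(1-t)y) \leq t f(x) + (1-t) f(y) - \frac{\alpha}{2}\prn{t\norm{x}^2 + (1-t)\norm{y}^2 - \norm{tx+(1-t)y}^2}.
\end{align*}

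The key step is the identity
\begin{align*}
    t\norm{x}^2 + (1-t)\norm{y}^2 - \norm{tx+(1-t)y}^2 = t(1-t)\norm{x-y}^2,
\end{align*}
which I would verify by expanding $\norm{tx+(1-t)y}^2 = t^2\norm{x}^2 + 2t(1-t)\langle x, y\rangle + (1-t)^2\norm{y}^2$, collecting terms, and using $t - t^2 = t(1-t)$ together with $(1-t) - (1-t)^2 = t(1-t)$ to recognize the remaining expression as $t(1-t)\prn{\norm{x}^2 - 2\langle x,y\rangle + \norm{y}^2}$.

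Substituting this identity into the displayed inequality turns the convexity condition for $g$ into precisely the defining inequality for strong convexity of $f$ with convexity parameter $\alpha$. I expect no genuine obstacle here: the entire content is the polarization-type identity above, and since every transition is an equivalence rather than a one-sided estimate, both directions of the claim are obtained at once. The only point requiring mild care is that the quantifiers ``for all $x,y \in X$ and all $t \in [0,1]$'' are carried unchanged through each step, so that the same fixed $\alpha$ witnesses both properties.
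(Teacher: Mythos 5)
Your proposal is correct and follows essentially the same route as the paper: both hinge on the identity $t\norm{x}^2+(1-t)\norm{y}^2-\norm{tx+(1-t)y}^2=t(1-t)\norm{x-y}^2$ (the paper's Lemma~\ref{thm:norm_strong2}) and then pass through a chain of reversible rearrangements linking the strong convexity inequality for $f$ to the convexity inequality for $g$. The only cosmetic differences are the starting point of the chain and that you verify the identity via the inner-product expansion rather than coordinatewise.
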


\begin{example}\label{ex:standard1}
    Let $f = (f_1, f_2, f_3): \R^3 \to \R^3$ be the mapping defined by
    \begin{align*}
        f_1(x_1, x_2, x_3) &= a(x_1 - 1)^2 + x_2^2 + x_3^2 \quad (a > 0),\\
        f_2(x_1, x_2, x_3) &= x_1^2 + (x_2 - 1)^2 + x_3^2,\\
        f_3(x_1, x_2, x_3) &= x_1^2 + x_2^2 + (x_3 - 1)^2.
    \end{align*}
    First, we show that $f$ is strongly convex.
    
    Let $\wt{f}:\R^3\to \R$ be the mapping defined by $\wt{f}(x)=\sum_{i=1}^3c_i(x_i-p_i)^2$, where $c_i>0$ for any $i=1, 2, 3$, $x=(x_1,x_2,x_3)$ and $(p_1,p_2,p_3)\in \R^3$.
    Set $\alpha=\min\set{c_1,c_2,c_3}$ and $g(x)= \wt{f}(x)-\frac{\alpha}{2}\norm{x}^2$.
    Then, we have
    \begin{align*}
        g(x)
        =\sum_{i=1}^3\left(\left(c_i-\frac{\alpha}{2}\right)x_i^2-2c_ip_ix_i+c_ip_i^2\right).
    \end{align*}
    Since $c_i-\frac{\alpha}{2}>0$ for all $i=1,2,3$, the function $g$ is convex.
    Therefore, $\wt{f}$ is a strongly convex function with a convexity parameter $\alpha$ by \cref{thm:norm_strong}.
    
    Since $\wt{f}$ is strongly convex,  $f$ is also strongly convex for all $a > 0$.
    Since $\rank df_x \geq 2$ for any $x \in \R^3$ and $a > 0$, the problem of minimizing $f$ is $C^\infty$  simplicial for any $a > 0$ by \cref{thm:main-C2} (see \cref{fig:dsq}).
    With the parameter $a$, the shapes of the Pareto set and the Pareto front change while the simpliciality is maintained.
    If $a = 1$, the Pareto set is a triangle as shown in \cref{fig:dsq1-X}.
    If $a = 4$ or $a = 1/4$, the Pareto set is a curved triangle as shown in \cref{fig:dsq2-X,fig:dsq3-X}.
    For the precise description of $X^*(f)$, see \cref{rem:description} in \cref{sec:mainproof}.
    \begin{figure}[hpbt]
        \centering%
        \subcaptionbox{Simplex $\Delta^2$.\label{fig:simplex}}{%
            \begin{overpic}[width=0.5\hsize,clip,trim=20 20 20 -20]{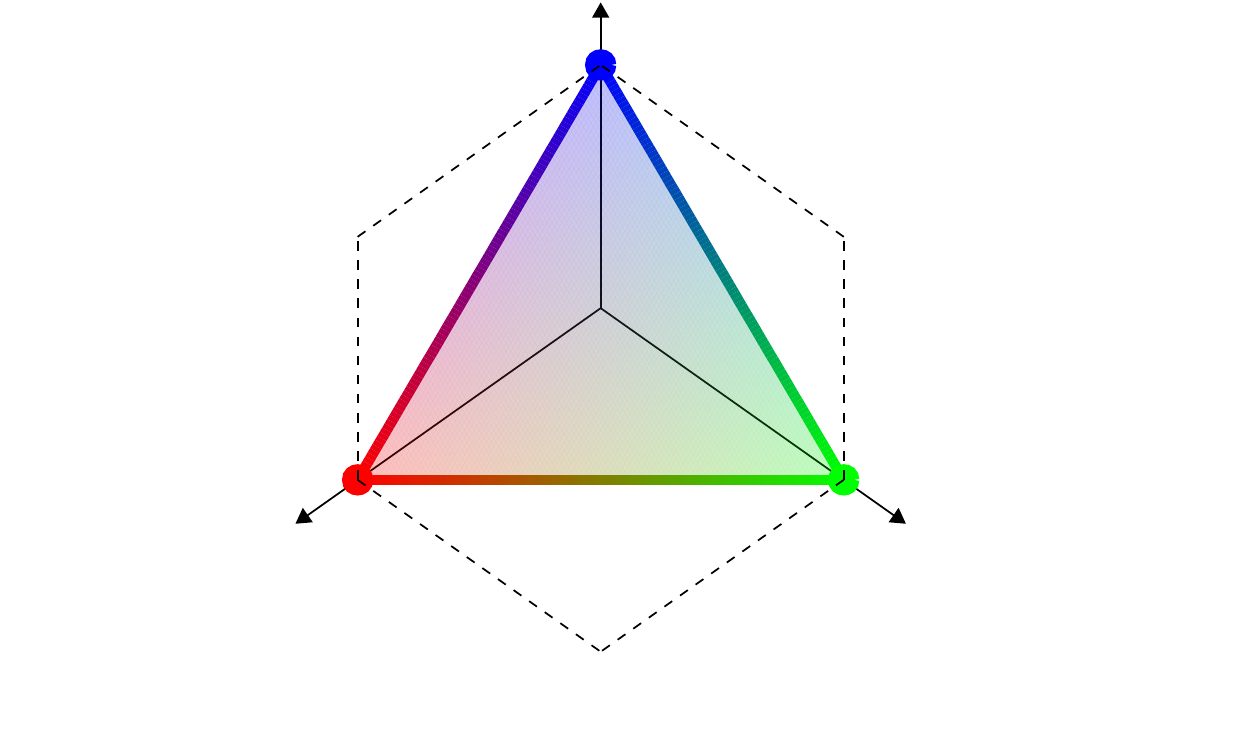}
                \put(47, 29){\tiny $0$}% origin
                \put(25, 13){\tiny $1$}% max(x)
                \put(69, 13){\tiny $1$}% max(y)
                \put(50, 56){\tiny $1$}% max(z)
                \put(14, 11){\tiny $w_1$}% xlabel
                \put(76, 11){\tiny $w_2$}% ylabel
                \put(46, 63){\tiny $w_3$}% zlabel
                \put(14, 20){\tiny $\Delta_{\{1\}}$}
                \put(71, 20){\tiny $\Delta_{\{2\}}$}
                \put(36, 57){\tiny $\Delta_{\{3\}}$}
                \put(42, 14){\tiny $\Delta_{\{1, 2\}}$}
                \put(21, 42){\tiny \colorbox{white}{$\Delta_{\{1, 3\}}$}}
                \put(58, 42){\tiny \colorbox{white}{$\Delta_{\{2, 3\}}$}}
                \put(40, 24){\tiny $\Delta_{\{1, 2, 3\}}$}
            \end{overpic}
        }\\
        \subcaptionbox{Pareto set (left) and Pareto front (right) of $f$ with $a = 1$.\label{fig:dsq1-X}}{%
            \begin{overpic}[width=0.5\hsize,clip,trim=20 20 20 -26]{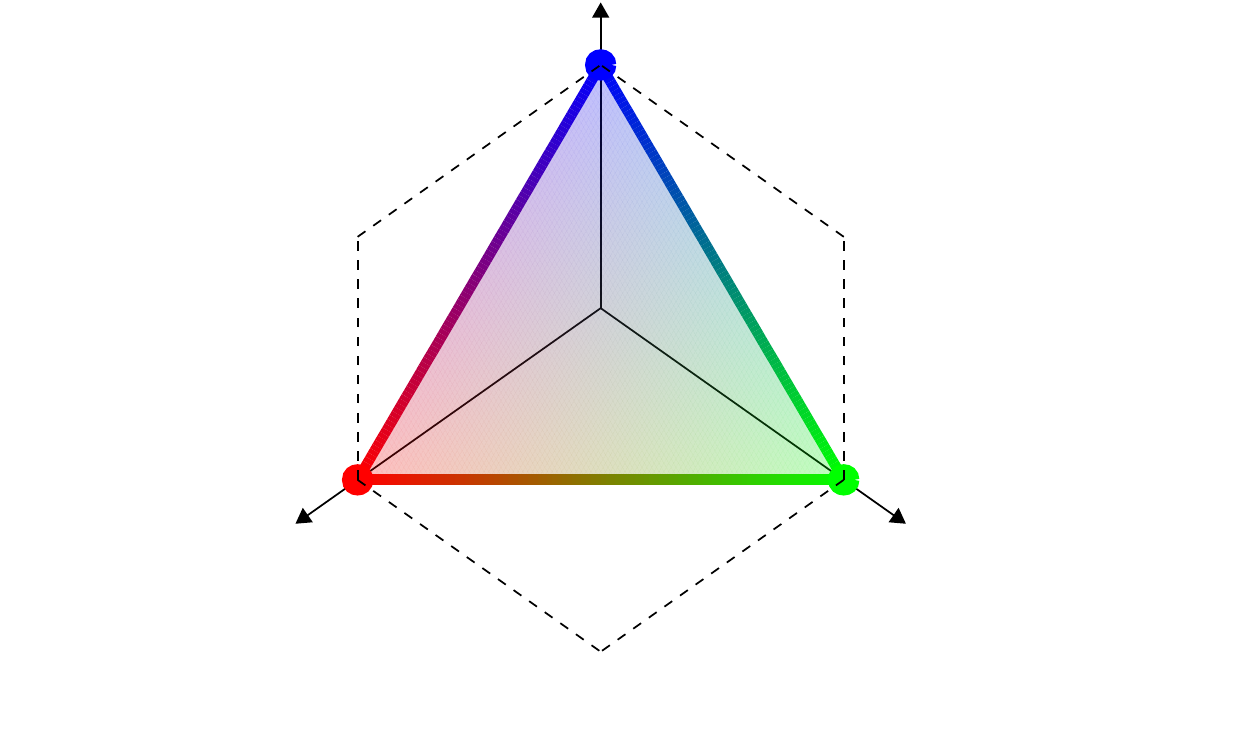}
                \put(47, 29){\tiny $0$}% origin
                \put(25, 13){\tiny $1$}% max(x)
                \put(69, 13){\tiny $1$}% max(y)
                \put(50, 56){\tiny $1$}% max(z)
                \put(14, 11){\tiny $x_1$}% xlabel
                \put(76, 11){\tiny $x_2$}% ylabel
                \put(46, 63){\tiny $x_3$}% zlabel
                \put(05, 20){\tiny $X^*(f_{\{1\}})$}
                \put(71, 20){\tiny $X^*(f_{\{2\}})$}
                \put(28, 57){\tiny $X^*(f_{\{3\}})$}
                \put(37, 14){\tiny $X^*(f_{\{1, 2\}})$}
                \put(13, 42){\tiny \colorbox{white}{$X^*(f_{\{1, 3\}})$}}
                \put(58, 42){\tiny \colorbox{white}{$X^*(f_{\{2, 3\}})$}}
                \put(35, 22){\tiny $X^*(f_{\{1, 2, 3\}})$}
            \end{overpic}
            \begin{overpic}[width=0.5\hsize,clip,trim=20 20 20 -26]{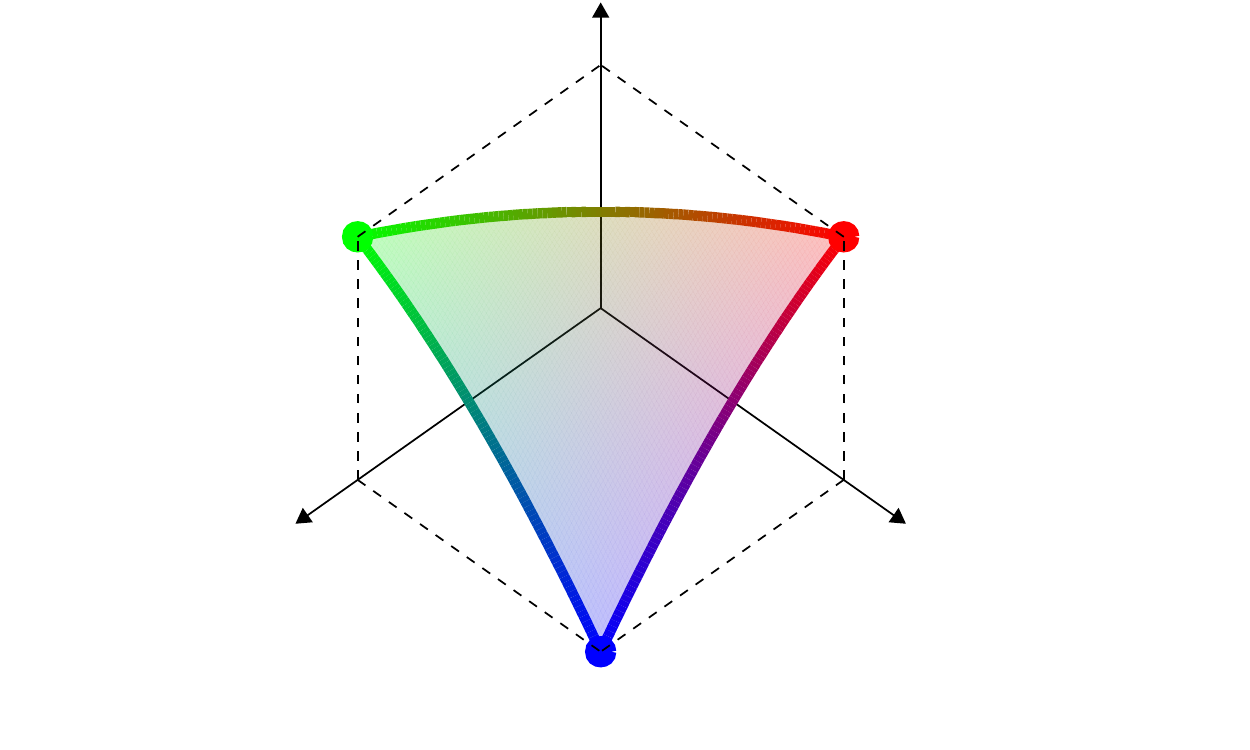}
                \put(47, 29){\tiny $0$}% origin
                \put(25, 13){\tiny $2$}% max(x)
                \put(69, 13){\tiny $2$}% max(y)
                \put(50, 56){\tiny $2$}% max(z)
                \put(14, 11){\tiny $f_1$}% xlabel
                \put(76, 11){\tiny $f_2$}% ylabel
                \put(46, 63){\tiny $f_3$}% zlabel
                \put(70, 44){\tiny $f(X^*(f_{\{1\}}))$}
                \put(02, 44){\tiny $f(X^*(f_{\{2\}}))$}
                \put(51, 02){\tiny $f(X^*(f_{\{3\}}))$}
                \put(34, 46){\tiny \colorbox{white}{$f(X^*(f_{\{1, 2\}}))$}}
                \put(64, 26){\tiny \colorbox{white}{$f(X^*(f_{\{1, 3\}}))$}}
                \put(02, 26){\tiny \colorbox{white}{$f(X^*(f_{\{2, 3\}}))$}}
                \put(32, 36){\tiny $f(X^*(f_{\{1, 2, 3\}}))$}
            \end{overpic}
        }\\
        \subcaptionbox{Pareto set (left) and Pareto front (right) of $f$ with $a = 4$.\label{fig:dsq2-X}}{%
            \begin{overpic}[width=0.5\hsize,clip,trim=20 20 20 -26]{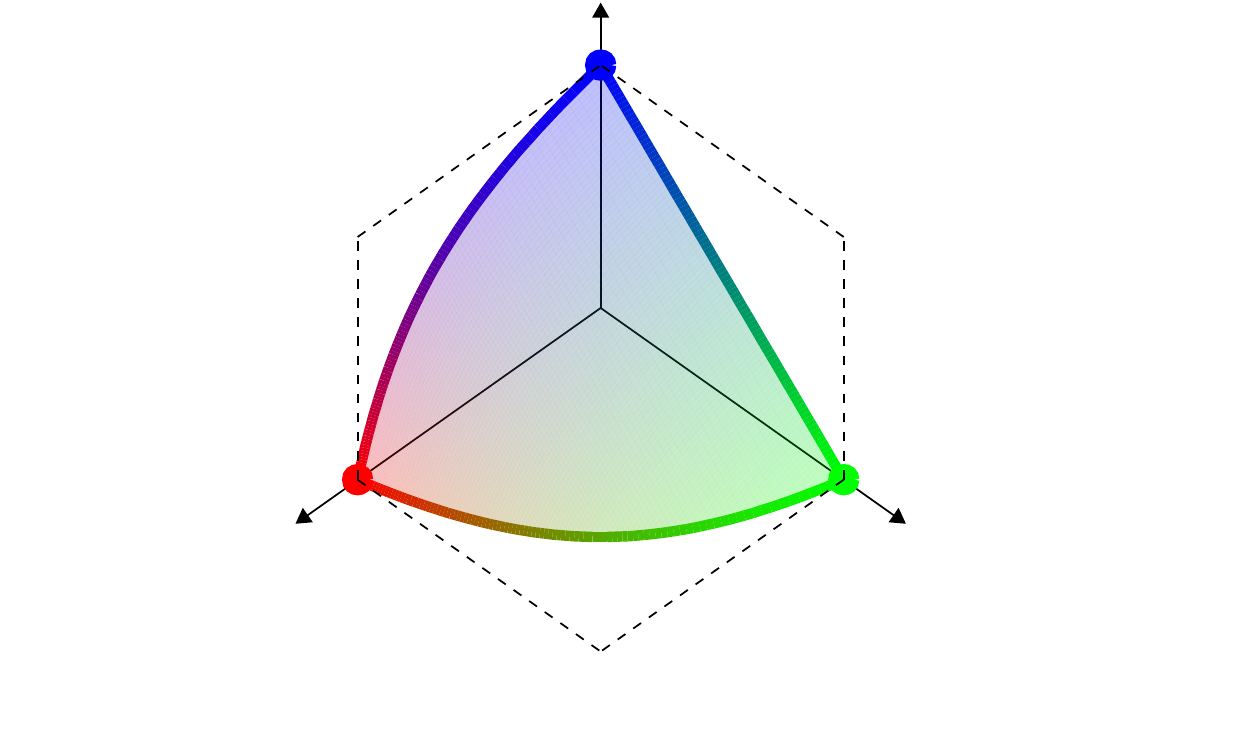}
                \put(47, 29){\tiny $0$}% origin
                \put(25, 13){\tiny $1$}% max(x)
                \put(69, 13){\tiny $1$}% max(y)
                \put(50, 56){\tiny $1$}% max(z)
                \put(14, 11){\tiny $x_1$}% xlabel
                \put(76, 11){\tiny $x_2$}% ylabel
                \put(46, 63){\tiny $x_3$}% zlabel
                \put(06, 20){\tiny $X^*(f_{\{1\}})$}
                \put(72, 18){\tiny $X^*(f_{\{2\}})$}
                \put(28, 57){\tiny $X^*(f_{\{3\}})$}
                \put(37, 08){\tiny \colorbox{white}{$X^*(f_{\{1, 2\}})$}}
                \put(08, 42){\tiny \colorbox{white}{$X^*(f_{\{1, 3\}})$}}
                \put(58, 42){\tiny \colorbox{white}{$X^*(f_{\{2, 3\}})$}}
                \put(36, 21){\tiny $X^*(f_{\{1, 2, 3\}})$}
            \end{overpic}
            \begin{overpic}[width=0.5\hsize,clip,trim=88 58 88 40]{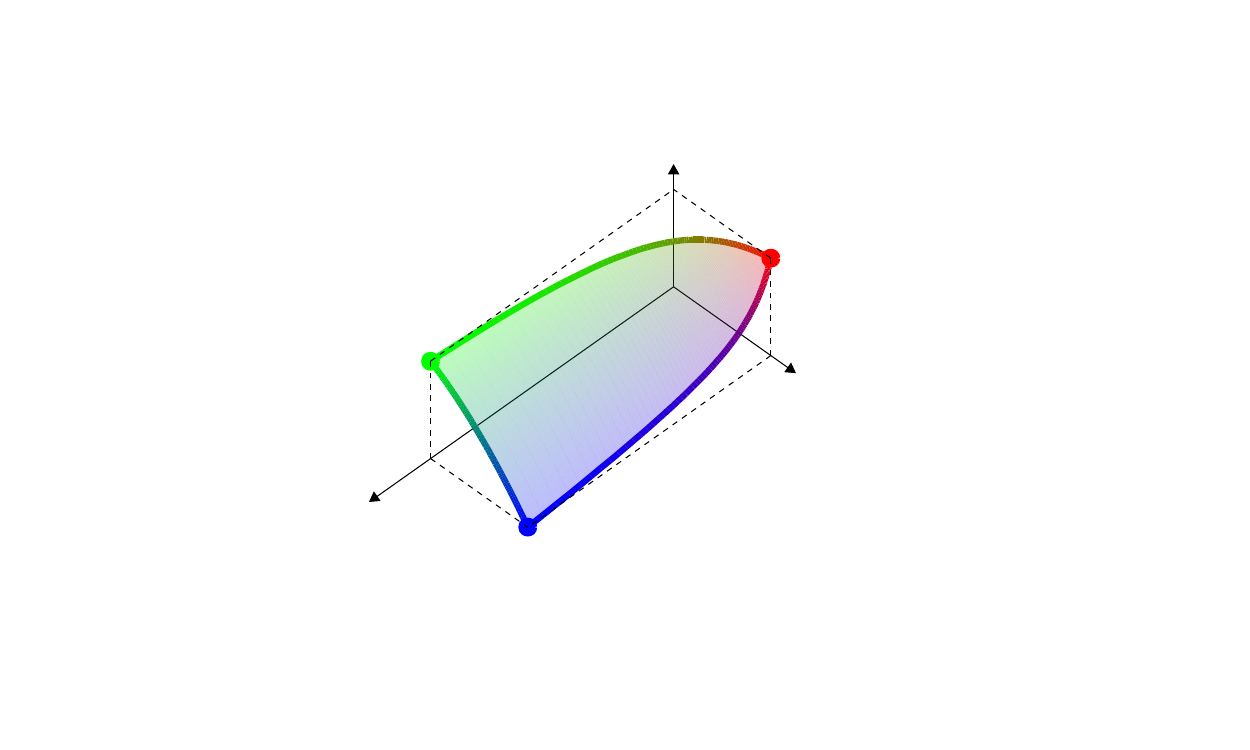}
                \put(56, 36){\tiny $0$}% origin
                \put(19, 09){\tiny $5$}% max(x)
                \put(72, 25){\tiny $2$}% max(y)
                \put(59, 56){\tiny $2$}% max(z)
                \put(05, 04){\tiny $f_1$}% xlabel
                \put(79, 25){\tiny $f_2$}% ylabel
                \put(56, 63){\tiny $f_3$}% zlabel
                \put(72, 49){\tiny $f(X^*(f_{\{1\}}))$}
                \put(00, 34){\tiny $f(X^*(f_{\{2\}}))$}
                \put(38, 02){\tiny $f(X^*(f_{\{3\}}))$}
                \put(17, 49){\tiny $f(X^*(f_{\{1, 2\}}))$}
                \put(54, 14){\tiny $f(X^*(f_{\{1, 3\}}))$}
                \put(-6, 19){\tiny \colorbox{white}{$f(X^*(f_{\{2, 3\}}))$}}
                \put(28, 26){\tiny $f(X^*(f_{\{1, 2, 3\}}))$}
            \end{overpic}
        }\\
        \subcaptionbox{Pareto set (left) and Pareto front (right) of $f$ with $a = 1/4$.\label{fig:dsq3-X}}{%
            \begin{overpic}[width=0.5\hsize,clip,trim=20 20 20 -26]{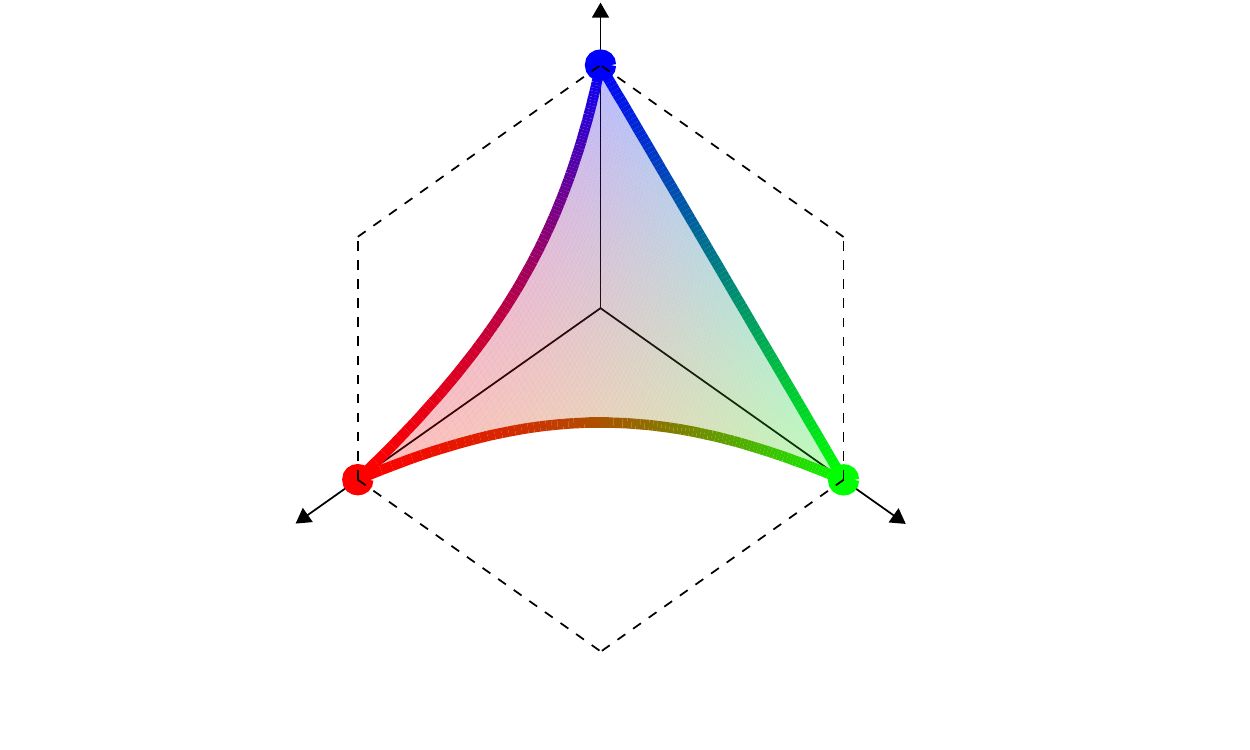}
                \put(47, 29){\tiny $0$}% origin
                \put(25, 13){\tiny $1$}% max(x)
                \put(69, 13){\tiny $1$}% max(y)
                \put(50, 56){\tiny $1$}% max(z)
                \put(14, 11){\tiny $x_1$}% xlabel
                \put(76, 11){\tiny $x_2$}% ylabel
                \put(46, 63){\tiny $x_3$}% zlabel
                \put(06, 20){\tiny $X^*(f_{\{1\}})$}
                \put(72, 18){\tiny $X^*(f_{\{2\}})$}
                \put(28, 57){\tiny $X^*(f_{\{3\}})$}
                \put(38, 18){\tiny $X^*(f_{\{1, 2\}})$}
                \put(17, 42){\tiny \colorbox{white}{$X^*(f_{\{1, 3\}})$}}
                \put(58, 42){\tiny \colorbox{white}{$X^*(f_{\{2, 3\}})$}}
                \put(38, 35){\tiny $X^*(f_{\{1, 2, 3\}})$}
            \end{overpic}
            \begin{overpic}[width=0.5\hsize,clip,trim=34 30 34 -10]{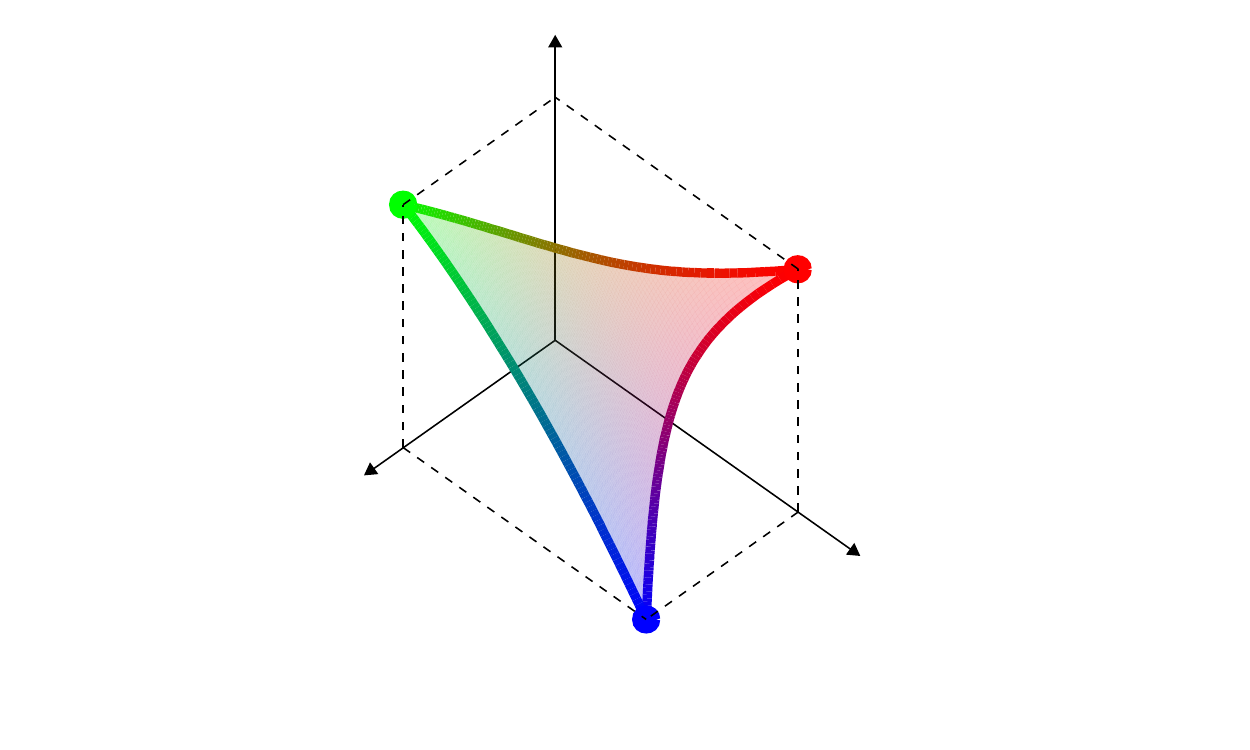}
                \put(42, 26){\tiny $0$}% origin
                \put(27, 14){\tiny $\frac{5}{4}$}% max(x)
                \put(66, 09){\tiny $2$}% max(y)
                \put(45, 54){\tiny $2$}% max(z)
                \put(20, 14){\tiny $f_1$}% xlabel
                \put(75, 07){\tiny $f_2$}% ylabel
                \put(41, 63){\tiny $f_3$}% zlabel
                \put(68, 40){\tiny $f(X^*(f_{\{1\}}))$}
                \put(04, 46){\tiny $f(X^*(f_{\{2\}}))$}
                \put(55, 02){\tiny $f(X^*(f_{\{3\}}))$}
                \put(36, 45){\tiny \colorbox{white}{$f(X^*(f_{\{1, 2\}}))$}}
                \put(57, 22){\tiny \colorbox{white}{$f(X^*(f_{\{1, 3\}}))$}}
                \put(07, 25){\tiny \colorbox{white}{$f(X^*(f_{\{2, 3\}}))$}}
                \put(34, 33){\tiny $f(X^*(f_{\{1, 2, 3\}}))$}
            \end{overpic}
        }%
        \caption{\Cref{ex:standard1} with $a = 1, 4, 1/4$.}\label{fig:dsq}
    \end{figure}
\end{example}
In \cref{ex:notC2}, we give a simple example of a strongly convex $C^1$ mapping which is not of class $C^2$.
\begin{example}\label{ex:notC2}
    Let $f = (f_1, f_2): \R \to \R^2$ be the mapping defined by
    \begin{align*}
        f_1(x) & = (x - 2)^2, \\
        f_2(x) & = \begin{cases}
            x^2             & \text{if $x < 1$},\\
            x^2 + (x - 1)^2 & \text{if $x \geq 1$}.
        \end{cases}
    \end{align*}
    Let $g_i:\R\to\R$ be the function defined by  $g_i(x)=f_i(x)-\frac{2}{2}x^2$, where $i=1,2$.
    Since $g_1$ and $g_2$ are convex, $f_1$ and $f_2$ are strongly convex functions with a convexity parameter $2$ by \cref{thm:norm_strong}, respectively.
    Hence, $f$ is strongly convex.
    Since $f_2$ is not of class $C^2$, we cannot apply \cref{thm:main-C2} to $f$.
    However, since $f$ is of class $C^1$, we can apply \cref{thm:main-C1}.
    Since $\rank df_x = 1$ for any $x \in \R$, the problem of minimizing $f$ is $C^0$ simplicial by \cref{thm:main-C1}.
\end{example}

%Next, we give remarks on \cref{thm:main-C1,thm:maingeneric} in \cref{rem:main-C1,rem:maingeneric}, respectively.
\begin{remark}\label{rem:main-C1}
We give the following remarks on \cref{thm:main-C1}. 
\begin{enumerate}
  \item 
  Note that (strict) convexity of a mapping does not necessarily imply that the problem is $C^0$ simplicial.
For example, the problem of minimizing $f:\R\to \R$ defined by $f(x)=e^x$ does not have a Pareto optimum (i.e.~a minimizer).
Thus, it is not $C^0$ simplicial although $f$ is strictly convex.
  \item
    We give an example such that \cref{thm:main-C1} does not hold without the rank assumption.
    Let $f = (f_1, f_2): \R \to \R^2$ be the mapping defined by $f(x) = (x^2, x^2)$.
    By \cref{thm:norm_strong}, the mapping $f$ is strongly convex.
    Since $0 \in \R$ is a Pareto optimum and $\rank df_0 = 0$, the mapping $f$ does not satisfy the rank assumption in \cref{thm:main-C1}.
    Since $X^*(f) = \set{0}$, the problem of minimizing $f$ is not $C^0$ simplicial.
    \end{enumerate}
\end{remark}
\begin{remark}\label{rem:maingeneric}
    We give a remark on \cref{thm:maingeneric}. 
    Let $f = (f_1, f_2, f_3): \R^3 \to \R^3$ be the mapping defined by $f_i(x) = \norm{x}^2$ for any integer $i$ $(1 \leq i \leq 3)$.
    By \cref{thm:norm_strong}, the mapping $f$ is strongly convex.
    In order to make the problem of minimizing $f$ simplicial by generic linear perturbations, it is necessary to perturb at least two components of $f$.
    
    First, we consider the case without linear perturbations.
    Since $f_1$, $f_2$ and $f_3$ have the unique minimizer $0 \in \R^3$, we have $X^*(f) = \set{0}$.
    Hence, the problem of minimizing $f$ is not $C^0$ simplicial.
    
    Next, we linearly perturb only one component $f_{s_1}$ of $f$, where $s_1$, $s_2$ and $s_3$ are three elements satisfying $\set{s_1, s_2, s_3} = \set{1, 2, 3}$.
    Set
    \begin{align*}
        \mathcal{L}(\R^3, \R^3)_{(s_2, s_3)}=\set{(\pi_1, \pi_2, \pi_3) \in \mathcal{L}(\R^3, \R^3) | \pi_{s_2} = \pi_{s_3} = 0}.
    \end{align*}
    Let $\pi = (\pi_1, \pi_2, \pi_3)$ be an arbitrary element of $\mathcal{L}(\R^3, \R^3)_{(s_2, s_3)}$.
    Since
    \begin{align*}
        (f_{s_2} + \pi_{s_2})(x) = (f_{s_3} + \pi_{s_3})(x) = \norm{x}^2,
    \end{align*}
    the origin $0 \in \R^3$ is the unique minimizer of $f_{s_2} + \pi_{s_2}$ and $f_{s_3} + \pi_{s_3}$.
    Since $f_{s_1} + \pi_{s_1}$ is a distance-squared function, $f_{s_1} + \pi_{s_1}$ has a unique minimizer.
    Let $p \in \R^3$ be the unique minimizer.
    Then, it is not hard to see that
    \begin{align*}
        X^*(f + \pi) = \set{tp \in \R^3 | t \in [0, 1]}.
    \end{align*}
    Therefore, the problem of minimizing $f + \pi$ is not $C^0$  simplicial.
    
    Finally, we consider linear perturbations of two components of $f$.
    Let $s$ be an arbitrary integer satisfying $1 \leq s \leq 3$.
    By \cref{thm:maingeneric}, there exists a Lebesgue measure zero subset $\Sigma $ of $\mathcal{L}(\R^3, \R^3)_s$ such that for any $\pi \in \mathcal{L}(\R^3, \R^3)_s - \Sigma$, the problem of minimizing $f + \pi: \R^3 \to \R^3$ is $C^\infty$ simplicial.
\end{remark}

%%%%%%%%%%%%%%%%%%%%%%%%%%%%%%%%%%%%%%%%%%%%%%%%%%%%%%%%%%%%%%%%%%%%%%%%%%%%%%%%
\section{Preliminaries for the proof of \texorpdfstring{\cref{thm:main-C1}}{Theorem 2}}\label{sec:pre}
%%%%%%%%%%%%%%%%%%%%%%%%%%%%%%%%%%%%%%%%%%%%%%%%%%%%%%%%%%%%%%%%%%%%%%%%%%%%%%%%
In this section, we prepare some lemmas for the proof of \cref{thm:main-C1}.

Let $f: U \to \R^m$ be a $C^1$ mapping, where $U$ is a non-empty open subset of $\R^n$.
A point $x \in U$ is called a \emph{critical point} of $f$ if $\rank df_x < m$.
We denote the set consisting of all critical points of $f$ by $C(f)$.
The following lemma gives a relationship between critical points and Pareto optimums.
\begin{lemma}\label{thm:critical}
    Let $f: U \to \R^m$ be a $C^1$ mapping, where $U$ is a non-empty open subset of $\R^n$.
    Then, $X^*(f) \subset C(f)$.
\end{lemma}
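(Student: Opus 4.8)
The plan is to prove the contrapositive: if a point $x \in U$ satisfies $x \notin C(f)$, that is $\rank df_x = m$, then $x \notin X^*(f)$. The geometric idea is that full rank of the differential supplies a direction along which all $m$ component functions strictly decrease to first order, which immediately violates Pareto optimality. Since $\rank df_x = m$, the linear map $df_x : \R^n \to \R^m$ is surjective, so I would first pick a vector $v \in \R^n$ with $df_x(v) = (-1\ld -1)$; in particular $(df_x(v))_i < 0$ for every $i \in M$.

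Next I would move along the ray $t \mapsto x + tv$ for small $t > 0$. Because $U$ is open and $x \in U$, we have $x + tv \in U$ for all sufficiently small $t$. By the definition of differentiability, for each $i \in M$,
\begin{align*}
    f_i(x + tv) = f_i(x) + t\,(df_x(v))_i + o(t) \quad (t \to 0).
\end{align*}
As there are only finitely many indices $i \in M$ and each coefficient $(df_x(v))_i$ is strictly negative, I can choose a single $t_0 > 0$ small enough that $f_i(x + tv) < f_i(x)$ holds simultaneously for all $i \in M$ and all $0 < t \leq t_0$. Fixing such a $t$ and setting $y = x + tv \in U$, the point $y$ satisfies $f_i(y) < f_i(x)$ for every $i \in M$; in particular $f_i(y) \leq f_i(x)$ for all $i \in M$ and $f_j(y) < f_j(x)$ for at least one $j \in M$. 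Hence $x$ is not a Pareto optimum of $f$, that is $x \notin X^*(f)$. This establishes the contrapositive and therefore the inclusion $X^*(f) \subset C(f)$.

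This is essentially a first-order necessary condition for Pareto optimality, so I do not expect a genuine obstacle. The only point that requires (minor) care is uniformity of the remainder: one must ensure that the $o(t)$ error terms can be made simultaneously negligible across the finite collection of components, so that a single step size $t$ produces a strict decrease in \emph{all} objectives at once rather than merely in each one separately. Finiteness of $M$ makes this routine.
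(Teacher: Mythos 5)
Your proof is correct, but it takes a genuinely different route from the paper's. The paper argues by contradiction via the implicit function theorem: after disposing of the case $n < m$ (where $C(f) = U$ trivially), it notes that if $x \in X^*(f)$ were not critical, then $f$ would map some neighborhood $U_x$ of $x$ onto an open neighborhood of $f(x)$; such a neighborhood contains points all of whose coordinates are strictly smaller than those of $f(x)$, and any preimage of such a point contradicts Pareto optimality. You instead construct an explicit descent direction: surjectivity of $df_x$ yields $v$ with $df_x(v) = (-1\ld -1)$, and a first-order expansion along $x + tv$ produces a nearby point at which every objective strictly decreases. Both arguments hinge on the same fact (surjectivity of $df_x$ at a non-critical point), but yours is more elementary: it uses only differentiability of $f$ at the single point $x$, so it in fact proves the lemma under the weaker hypothesis that $f$ is differentiable at each Pareto optimum, whereas the paper's appeal to the implicit function theorem genuinely uses the $C^1$ hypothesis near $x$. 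What the paper's approach buys is brevity (local openness of submersions in one stroke, with no estimates); what yours buys is weaker hypotheses and self-containedness, and your explicit attention to making the $o(t)$ remainders simultaneously negligible over the finitely many indices is exactly the point that needs care in this kind of argument.
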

\begin{proof}[Proof of \cref{thm:critical}]
    In the case $n < m$, since $C(f) = U$, \cref{thm:critical} clearly holds.
    Next, we consider the case $n \geq m$.
    Suppose that there exists $x \in X^*(f)$ such that $x \not \in C(f)$.
    Since $x \not \in C(f)$, there exists an open neighborhood $U_x$ of $x$ such that $f(U_x)$ is an open neighborhood of $f(x)$ by the implicit function theorem.
    This contradicts $x \in X^*(f)$.
\end{proof}

We give the following two lemmas (\cref{thm:sufficient,thm:necessary}) in \cite{Miettinen1999}.
\begin{lemma}[{\cite[Theorem~3.1.3~in~Part~I\hspace{-.1em}I~(p.~79)]{Miettinen1999}}]\label{thm:sufficient}
    Let $f = (f_1, \dots, f_m): \R^n \to \R^m$ be a $($not necessarily continuous$)$  mapping and let $(w_1, \dots, w_m) \in \Delta^{m - 1}$.
    If $x \in \R^n$ is the unique minimizer of the function $\sum_{i = 1}^m w_i f_i$, then $x \in X^*(f)$.
\end{lemma}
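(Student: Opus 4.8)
The plan is to argue by contradiction, exploiting the fact that nonnegative weights convert a Pareto domination relation into an inequality between the corresponding weighted sums. Suppose, contrary to the conclusion, that $x \notin X^*(f)$. By the definition of a Pareto optimum, there then exists \emph{another} point $y \in \R^n$ (in particular $y \neq x$) such that $f_i(y) \leq f_i(x)$ for every $i \in M$ and $f_j(y) < f_j(x)$ for at least one index $j \in M$.

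Next I would multiply each inequality $f_i(y) \leq f_i(x)$ by its weight $w_i \geq 0$, which preserves the direction of the inequality, and sum over $i \in M$ to obtain
\[
    \sum_{i=1}^m w_i f_i(y) \leq \sum_{i=1}^m w_i f_i(x).
\]
On the other hand, $x$ is the \emph{unique} minimizer of the function $\sum_{i=1}^m w_i f_i$, so for every point different from $x$ the value of this function is strictly larger; applying this to $y \neq x$ gives
\[
    \sum_{i=1}^m w_i f_i(x) < \sum_{i=1}^m w_i f_i(y).
\]
These two displays are contradictory, so no dominating point $y$ can exist, and therefore $x \in X^*(f)$.

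There is no genuine obstacle in this argument; the only point requiring care is that the dominating point $y$ furnished by the failure of Pareto optimality is \emph{distinct} from $x$, which is exactly what lets the uniqueness hypothesis upgrade the weak inequality into a strict one. I note in passing that the strict domination $f_j(y) < f_j(x)$ plays no role in this direction: only the weak inequalities together with $w_i \geq 0$ and $y \neq x$ are used, so the proof is essentially a one-line estimate followed by an appeal to uniqueness.
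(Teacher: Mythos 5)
Your proof is correct. The paper does not prove this lemma itself---it imports it from Miettinen's book---and your contradiction argument (multiplying the domination inequalities by the nonnegative weights, summing, and playing the result against the strict inequality that uniqueness of the minimizer forces at the distinct point $y \neq x$) is precisely the standard proof of that cited result, including the observation that the strict domination inequality is not even needed.
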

The following is a special case of the Karush--Kuhn--Tucker necessary condition for Pareto optimality.

\begin{lemma}[{\cite[Theorem~3.1.5~in~Part~I~(p.~39)]{Miettinen1999}}]\label{thm:necessary}
    Let $f = (f_1, \dots, f_m): \R^n \to \R^m$ be a $C^1$ mapping.
    If $x \in X^*(f)$, then there exists an element $(w_1, \dots, w_m) \in \Delta^{m - 1}$ satisfying $\sum_{i = 1}^m w_i (df_i)_x = 0$.
\end{lemma}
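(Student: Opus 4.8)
The plan is to prove the statement by establishing its underlying geometric content directly. I would identify each differential $(df_i)_x$ with the gradient vector $\nabla f_i(x)\in\R^n$ via the standard inner product, and observe that the desired conclusion is precisely the assertion that $0\in\R^n$ lies in the convex hull $K=\mathrm{conv}\,\set{\nabla f_1(x)\ld \nabla f_m(x)}$. Indeed, a representation $0=\sum_{i=1}^m w_i\nabla f_i(x)$ with $w_i\ge 0$ and $\sum_{i=1}^m w_i=1$ is exactly an element $(w_1\ld w_m)\in\Delta^{m-1}$ satisfying $\sum_{i=1}^m w_i(df_i)_x=0$. So it suffices to show $0\in K$, and this reduction works with no constraint relating $n$ and $m$.

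First I would argue by contradiction: suppose $0\notin K$. Since $K$ is the convex hull of finitely many points, it is compact and convex, so the strict separating hyperplane theorem yields a nonzero vector $a\in\R^n$ and a scalar $b>0$ with $\langle a,\nabla f_i(x)\rangle\ge b$ for all $i\in M$. Setting $d=-a$, this gives $\langle\nabla f_i(x),d\rangle\le -b<0$ for every $i\in M$, so $d$ is a common first-order descent direction for all $m$ functions simultaneously.

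Next I would feed this descent direction into the $C^1$ hypothesis. By first-order Taylor expansion, for each $i\in M$,
\[
    f_i(x+td)-f_i(x)=t\langle\nabla f_i(x),d\rangle+o(t)\le -tb+o(t)\qquad(t\to 0^+).
\]
Because $M$ is finite, a single step size $t_0>0$ can be chosen so that $f_i(x+t_0d)-f_i(x)<0$ holds simultaneously for all $i\in M$. Then $y=x+t_0d$ is another point (as $d\neq 0$ and $t_0>0$) with $f_i(y)<f_i(x)$ for every $i$; in particular $f_i(y)\le f_i(x)$ for all $i\in M$ with strict inequality for some $j$, so $y$ dominates $x$, contradicting $x\in X^*(f)$. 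Hence $0\in K$, which completes the proof.

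I expect the main obstacle to be the bookkeeping at the separation step: one must extract a single direction $d$ along which \emph{all} $m$ functions strictly decrease, and this is exactly what \emph{strict} separation of the point $0$ from the compact convex hull $K$ delivers. A merely nonstrict separation would only yield $\langle\nabla f_i(x),d\rangle\le 0$ and could fail to force a strict first-order decrease. The remaining care is routine: finiteness of $M$ lets the $o(t)$ terms be dominated uniformly, so one common $t_0$ suffices.
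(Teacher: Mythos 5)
Your proof is correct; the main point of comparison is that the paper offers no internal proof of this lemma at all --- it is imported verbatim from Miettinen's book \cite{Miettinen1999} (Theorem~3.1.5 in Part~I), so your argument fills in what the paper delegates to a citation. What you give is the standard proof of this Fritz John/Karush--Kuhn--Tucker-type necessary condition, essentially Gordan's theorem of the alternative: the conclusion is equivalent to $0 \in K$ with $K = \mathrm{conv}\set{\nabla f_1(x)\ld \nabla f_m(x)}$; if $0 \notin K$, strict separation of the point $0$ from the compact convex set $K$ yields $a \neq 0$ and $b>0$ with $\langle a, \nabla f_i(x)\rangle \geq b$ for all $i \in M$, so $d=-a$ is a common strict descent direction; first-order Taylor expansion (which only needs differentiability of each $f_i$ at $x$) plus finiteness of $M$ gives a single $t_0>0$ with $f_i(x+t_0 d) < f_i(x)$ for every $i$, and $y = x+t_0 d \neq x$ then violates the paper's definition of Pareto optimality. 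Each step checks out, and you correctly flag the one delicate point: weak separation would only give $\langle \nabla f_i(x), d\rangle \leq 0$, which does not force first-order decrease, so compactness of $K$ (hence strict separation) is genuinely needed. Two side benefits of your route worth noting: it imposes no relation between $n$ and $m$, and since your contradiction produces strict decrease in \emph{all} objectives simultaneously, it actually proves the stronger statement that the conclusion holds for every weakly Pareto optimal point, i.e.\ for all $x \in X^{\mathrm w}(f) \supset X^*(f)$, which is consistent with how such results are usually stated in the literature.
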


Now, we prepare the following four lemmas (\cref{thm:minimum,thm:chara,thm:weight,thm:lem-injective-strongly}) on strongly convex mappings.

\begin{lemma}[{\cite[Theorem~2.2.6~(p.~85)]{Nesterov2004}}]\label{thm:minimum}
    A strongly convex $C^1$ function $f: \R^n \to \R$ has a unique minimizer.
\end{lemma}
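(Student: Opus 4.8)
The plan is to prove uniqueness and existence of the minimizer separately, since they require quite different ingredients. Uniqueness is immediate from the defining inequality of strong convexity: if $x \neq y$ were two distinct minimizers, then taking $t = \frac{1}{2}$ gives
\begin{equation*}
    f\prn{\tfrac{x+y}{2}} \leq \tfrac{1}{2} f(x) + \tfrac{1}{2} f(y) - \tfrac{1}{8}\alpha \norm{x-y}^2 < \tfrac{1}{2} f(x) + \tfrac{1}{2} f(y),
\end{equation*}
and since $f(x) = f(y)$ equals the minimum value, the right-hand side also equals that minimum value, so the midpoint attains a strictly smaller value, which is a contradiction. Hence there is at most one minimizer.

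The substantial part is existence, which I would derive from coercivity of $f$ together with compactness of its sublevel sets. First I would invoke \cref{thm:norm_strong} to write $g(x) = f(x) - \frac{\alpha}{2}\norm{x}^2$, which is convex; since $f$ is $C^1$, so is $g$. A differentiable convex function lies above its tangent plane at $0$ (this follows by applying the one-variable first-order convexity inequality to $t \mapsto g(tx)$), so $g(x) \geq g(0) + \langle \nabla g(0), x\rangle$. Combining this with the Cauchy--Schwarz inequality yields
\begin{equation*}
    f(x) = g(x) + \tfrac{\alpha}{2}\norm{x}^2 \geq \tfrac{\alpha}{2}\norm{x}^2 - \norm{\nabla g(0)}\,\norm{x} + g(0),
\end{equation*}
whose right-hand side tends to $+\infty$ as $\norm{x} \to \infty$. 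Thus $f$ is coercive.

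To conclude, I would consider the sublevel set $S = \set{x \in \R^n | f(x) \leq f(0)}$. It is non-empty (it contains $0$), closed because $f$ is continuous, and bounded because $f$ is coercive; hence $S$ is compact. A continuous function on a compact set attains its infimum, so $f$ has a minimizer $x^* \in S$. Since $f(x) > f(0) \geq f(x^*)$ for every $x \notin S$, the point $x^*$ is in fact a global minimizer, and by the uniqueness argument above it is the only one.

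The only real obstacle is the existence step, specifically the passage from strong convexity to the quadratic lower bound $\frac{\alpha}{2}\norm{x}^2 - C\norm{x} + D$ that forces coercivity. I expect \cref{thm:norm_strong} together with the elementary tangent-plane inequality for $C^1$ convex functions to handle this cleanly, after which compactness of the sublevel set does the rest. Uniqueness, by contrast, is a direct one-line consequence of evaluating the strong convexity inequality at $t = \frac{1}{2}$.
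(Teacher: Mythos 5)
Your proof is correct. Note that the paper offers no proof of this lemma at all: it is quoted verbatim from Nesterov's textbook (Theorem 2.2.6), so there is no in-paper argument to compare against, and your write-up supplies a genuinely self-contained proof. Both halves check out: the $t=\tfrac{1}{2}$ midpoint argument gives the strict decrease $f\prn{\tfrac{x+y}{2}} \leq \min f - \tfrac{\alpha}{8}\norm{x-y}^2$, which settles uniqueness, and the coercivity-plus-compact-sublevel-set argument settles existence. One remark on economy: within the toolkit this paper already assembles, the existence step can be shortened by invoking \cref{thm:chara} (the first-order characterization of strong convexity) at the single base point $x=0$, which yields directly $f(y) \geq f(0) + df_0 \cdot y + \tfrac{\alpha}{2}\norm{y}^2 \geq f(0) - \norm{df_0}\,\norm{y} + \tfrac{\alpha}{2}\norm{y}^2$; this is exactly the quadratic lower bound you derive, but it bypasses the detour through \cref{thm:norm_strong} and the tangent-plane inequality for the convex part $g$. (Of course, \cref{thm:chara} is itself a cited result, so your route via \cref{thm:norm_strong} --- which the paper proves in its appendix --- arguably rests on more elementary foundations; the trade-off is a slightly longer argument.) Either way, coercivity makes the sublevel set $\set{x \in \R^n \mid f(x) \leq f(0)}$ compact, Weierstrass gives a minimizer there, and your observation that points outside the sublevel set cannot compete upgrades it to a global minimizer.
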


\begin{lemma}[{\cite[Theorem~2.1.9~(p.~64)]{Nesterov2004}}]\label{thm:chara}
    A $C^1$ function $f: \R^n \to \R$ is strongly convex with a convexity parameter $\alpha > 0$ if and only if
    \begin{align*}
        f(x) + df_x \cdot (y - x) + \frac{\alpha}{2} \norm{y - x}^2 \leq f(y)
    \end{align*}
    for any $x, y \in \R^n$.
\end{lemma}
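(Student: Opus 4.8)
The plan is to prove the two implications directly from the definition of strong convexity, without invoking any first-order characterization of ordinary convexity (which is not among the results stated above); at the end I note how \cref{thm:norm_strong} shortens one half.

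For the forward direction (strong convexity implies the gradient inequality), I would fix $x, y \in \R^n$ and apply the defining inequality to the convex combination $(1 - s)x + sy$ for $s \in (0, 1]$, treating it as $t x + (1-t) y$ with $t = 1 - s$. This gives
\begin{align*}
    f\big(x + s(y - x)\big) - f(x) \leq s\big(f(y) - f(x)\big) - \tfrac{1}{2}\alpha\, s(1 - s)\norm{x - y}^2.
\end{align*}
Dividing by $s$ and letting $s \to 0^+$, the left-hand difference quotient converges to the directional derivative $df_x \cdot (y - x)$ precisely because $f$ is of class $C^1$, while the right-hand side converges to $f(y) - f(x) - \tfrac{\alpha}{2}\norm{x - y}^2$. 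Rearranging yields the asserted inequality.

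For the reverse direction, I would fix $x, y \in \R^n$ and $t \in [0, 1]$, set $z = t x + (1 - t) y$, apply the assumed inequality twice with base point $z$ (once toward $x$, once toward $y$), and form the convex combination with weights $t$ and $1 - t$. Using $x - z = (1 - t)(x - y)$ and $y - z = -t(x - y)$, the linear terms $t\,df_z \cdot (x - z) + (1 - t)\,df_z \cdot (y - z)$ cancel because $t(x - z) + (1 - t)(y - z) = 0$, and the quadratic terms collapse via $t(1 - t)^2 + (1 - t)t^2 = t(1 - t)$ to give exactly $\tfrac{\alpha}{2} t(1 - t)\norm{x - y}^2$. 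This reproduces the defining inequality for strong convexity.

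I do not expect a genuine obstacle here: the argument is a standard two-point and limiting computation. The only point demanding care is the forward direction's limit, where the $C^1$ hypothesis is exactly what guarantees that the difference quotient of $f$ along the segment converges to $df_x \cdot (y - x)$; strong convexity alone, without differentiability, would only yield a one-sided directional-derivative statement. Alternatively, by \cref{thm:norm_strong} the claim reduces to the first-order characterization of convexity applied to $g(x) = f(x) - \tfrac{\alpha}{2}\norm{x}^2$, since $dg_x = df_x - \alpha x$ converts the plain gradient inequality for $g$ into the strongly convex one for $f$; but establishing that characterization requires essentially the same computation, so the direct route above is preferable.
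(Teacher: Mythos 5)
Your proof is correct, but note that there is no in-paper argument to compare it against: the paper states \cref{thm:chara} as a quoted result from Nesterov's book (Theorem~2.1.9) and never proves it. Judged on its own merits, your two-part argument is the standard, complete proof of this characterization. The forward direction is sound: dividing the strong-convexity inequality by $s$ and letting $s \to 0^+$ gives
\begin{align*}
    df_x \cdot (y - x) \leq f(y) - f(x) - \frac{\alpha}{2}\norm{y - x}^2,
\end{align*}
and you correctly identify that differentiability at $x$ (guaranteed by the $C^1$ hypothesis) is exactly what makes the difference quotient converge to $df_x \cdot (y - x)$. The reverse direction is also sound: applying the gradient inequality at $z = t x + (1 - t)y$ toward $x$ and toward $y$, weighting by $t$ and $1 - t$, the linear terms vanish since $t(x - z) + (1 - t)(y - z) = 0$, and the identity $t(1-t)^2 + (1-t)t^2 = t(1-t)$ collapses the quadratic terms to the required $\tfrac{\alpha}{2}t(1-t)\norm{x - y}^2$. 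Your closing remark is also well judged: reducing to \cref{thm:norm_strong} would trade this computation for the first-order characterization of ordinary convexity, which is likewise not proved in the paper, so the direct route is the self-contained one.
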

%By the same method as in the proof of \cite[Lemma~2.1.4~(p.~64)]{Nesterov2004}, we have \cref{thm:weight}.
\begin{lemma}[{\cite[Lemma~2.1.4~(p.~64)]{Nesterov2004}}]\label{thm:weight}
    Let $f_i: \R^n \to \R$ be a strongly convex $C^1$  function with a convexity parameter $\alpha_i > 0$, where $i$ is a positive integer $(1 \leq i \leq m)$.
    Then, for any $w = (w_1, \dots, w_m) \in \Delta^{m - 1}$, the function $\sum_{i = 1}^m w_i f_i: \R^n \to \R$ is a strongly convex $C^1$ function with a convexity parameter $\sum_{i = 1}^m w_i \alpha_i$.
\end{lemma}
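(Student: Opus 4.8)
The plan is to verify the defining inequality of strong convexity for $g=\sum_{i=1}^m w_i f_i$ directly from the corresponding inequalities for the individual $f_i$, and then to read off the convexity parameter from the resulting estimate. Since everything reduces to a nonnegative combination of inequalities, I expect no genuine obstacle; the only point requiring care is checking that the combined parameter is strictly positive, which the definition of strong convexity demands.

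First I would fix $x,y\in\R^n$ and $t\in[0,1]$ and write down the defining inequality of strong convexity for each component, namely
\begin{align*}
    f_i(tx+(1-t)y)\leq t f_i(x)+(1-t)f_i(y)-\tfrac{1}{2}\alpha_i\, t(1-t)\norm{x-y}^2.
\end{align*}
Because $w_i\geq 0$ for every $i\in M$, multiplying the $i$-th inequality by $w_i$ preserves its direction, and summing the resulting $m$ inequalities over $i\in M$ yields
\begin{align*}
    g(tx+(1-t)y)\leq t g(x)+(1-t)g(y)-\tfrac{1}{2}\Bigl(\sum_{i=1}^m w_i\alpha_i\Bigr)t(1-t)\norm{x-y}^2,
\end{align*}
where I have used $g=\sum_{i=1}^m w_i f_i$ and the linearity of the finite sum. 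This is precisely the defining inequality of strong convexity for $g$ with parameter $\sum_{i=1}^m w_i\alpha_i$.

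It remains to check the two further conditions the definition imposes. The map $g$ is of class $C^1$, being a finite linear combination of $C^1$ functions. The combined parameter $\sum_{i=1}^m w_i\alpha_i$ must be shown to be strictly positive, which is the sole subtlety: since $w\in\Delta^{m-1}$ we have $\sum_{i=1}^m w_i=1$ with all $w_i\geq 0$, so at least one index $i_0$ satisfies $w_{i_0}>0$, and as $\alpha_{i_0}>0$ we obtain $\sum_{i=1}^m w_i\alpha_i\geq w_{i_0}\alpha_{i_0}>0$. Hence $g$ is a strongly convex $C^1$ function with convexity parameter $\sum_{i=1}^m w_i\alpha_i$, as claimed. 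Should one prefer to avoid manipulating the defining inequality, an alternative route is to invoke \cref{thm:norm_strong}: setting $\alpha=\sum_{i=1}^m w_i\alpha_i$, one has
\begin{align*}
    g(x)-\tfrac{\alpha}{2}\norm{x}^2=\sum_{i=1}^m w_i\Bigl(f_i(x)-\tfrac{\alpha_i}{2}\norm{x}^2\Bigr),
\end{align*}
a nonnegative combination of convex functions and hence convex, so \cref{thm:norm_strong} again yields the strong convexity of $g$ with parameter $\alpha$.
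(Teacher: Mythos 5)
Your proof is correct. Note, however, that the paper does not prove this lemma at all: it is quoted verbatim from Nesterov's textbook (Lemma~2.1.4 there), so the paper's ``proof'' is a citation. Your direct verification --- multiplying each defining inequality by $w_i \geq 0$, summing, and reading off the parameter $\sum_{i=1}^m w_i \alpha_i$ --- is the standard argument and is complete; you also correctly handle the two points a careless write-up would omit, namely that the combination is $C^1$ and that the combined parameter is strictly positive (which uses $\sum_{i=1}^m w_i = 1$, so some $w_{i_0} > 0$). Your alternative route through \cref{thm:norm_strong} is also valid and fits the paper's toolkit nicely, since the identity
\begin{align*}
    \sum_{i=1}^m w_i f_i(x) - \tfrac{1}{2}\Bigl(\sum_{i=1}^m w_i\alpha_i\Bigr)\norm{x}^2 = \sum_{i=1}^m w_i\Bigl(f_i(x)-\tfrac{\alpha_i}{2}\norm{x}^2\Bigr)
\end{align*}
reduces everything to the fact that a nonnegative combination of convex functions is convex; what the first route buys is independence from \cref{thm:norm_strong} (whose proof the paper defers to an appendix), while the second buys brevity once that lemma is available. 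Either argument would serve as a self-contained replacement for the citation.
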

%\begin{remark}
 %   In the proof of \cref{thm:main-C1}, the function $\sum_{i = 1}^m w_i f_i: \R^n \to \R$
%    We can easily show \cref{thm:weight} by the same method as in the proof of \cite[Lemma~2.1.4~(p.~64)]{Nesterov2004} although in \cite[Lemma~2.1.4~(p.~64)]{Nesterov2004}, strongly convex functions are assumed to be of class $C^1$.
    %a convexity parameter of  the function $\sum_{i = 1}^m w_i f_i: \R^n \to \R$ does not be described.  
%\end{remark}

\begin{lemma}[\cite{Hamada2019c}]\label{thm:lem-injective-strongly}
    Let $f: X \to \R^m$ be a strongly convex $($not necessarily continuous$)$ mapping, where $X$ is a convex subset of $\R^n$.
    Then, $f|_{X^*(f)}: X^*(f) \to \R^m$ is injective.
\end{lemma}

In order to give the last lemma (\cref{thm:ine}) in this section, which is essentially used in the proof of \cref{thm:main-C1}, we prepare the following three lemmas (\cref{thm:weak,thm:closed,thm:compact}). 

Let $f: X \to \R^m$ be a mapping, where $X$ is a given arbitrary set.
A point $x \in X$ is called a \emph{weakly Pareto optimum} of $f$ if there does not exist another point $y \in X$ such that $f_i(y) < f_i(x)$ for all $i \in M$.
Then, by $X^{\mathrm w}(f)$, we denote the set consisting of all weakly Pareto optimums of $f$.

\begin{lemma}[\cite{Hamada2019c}]\label{thm:weak}
    Let $f: \R^n \to \R^m$ be a strongly convex $($not necessarily continuous$)$ mapping.
    Then, we have $X^*(f) = X^{\mathrm w}(f)$.
\end{lemma}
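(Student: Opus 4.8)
The plan is to prove the two inclusions separately. The inclusion $X^*(f) \subseteq X^{\mathrm w}(f)$ holds for an arbitrary mapping and needs no convexity: I would argue by contraposition. If $x \notin X^{\mathrm w}(f)$, then some $y$ satisfies $f_i(y) < f_i(x)$ for all $i \in M$; in particular $f_i(y) \leq f_i(x)$ for all $i$ and $f_1(y) < f_1(x)$, so $y$ dominates $x$ in the Pareto sense and hence $x \notin X^*(f)$.

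The nontrivial inclusion is $X^{\mathrm w}(f) \subseteq X^*(f)$, where strong convexity enters. Again I would argue by contraposition: suppose $x \notin X^*(f)$, so there is $y \in \R^n$ with $f_i(y) \leq f_i(x)$ for all $i \in M$ and $f_j(y) < f_j(x)$ for some $j$. The strict inequality forces $y \neq x$, hence $\norm{x - y} > 0$. The key idea is to examine the midpoint $z = \frac{1}{2}(x + y)$ and to exploit the quadratic penalty term in the definition of strong convexity. Writing $\alpha_i > 0$ for a convexity parameter of $f_i$ and applying the defining inequality at $t = 1/2$ yields
\begin{align*}
    f_i(z) \leq \tfrac{1}{2} f_i(x) + \tfrac{1}{2} f_i(y) - \tfrac{1}{8} \alpha_i \norm{x - y}^2.
\end{align*}
Since $f_i(y) \leq f_i(x)$, the first two terms are bounded above by $f_i(x)$, so $f_i(z) \leq f_i(x) - \frac{1}{8}\alpha_i \norm{x-y}^2 < f_i(x)$ for every $i \in M$. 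Thus $z$ witnesses $x \notin X^{\mathrm w}(f)$, completing the contraposition.

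The step I expect to carry the whole argument is this strict strong-convexity estimate: although $y$ only weakly dominates $x$ (equality is permitted in every coordinate), the negative term $-\frac{1}{8}\alpha_i\norm{x-y}^2$ forces a strict decrease in every component at the midpoint. Plain convexity would only give $f_i(z) \leq f_i(x)$, which is not enough; strong convexity is precisely what upgrades this to a strict inequality. I would also note that continuity of $f$ plays no role: strong convexity is a pointwise inequality valid on the convex domain $\R^n$, so the midpoint $z$ lies in the domain and the estimate applies verbatim, which is why the lemma is stated for not necessarily continuous mappings.
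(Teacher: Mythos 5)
Your proof is correct. Note that the paper itself gives no proof of this lemma --- it is quoted from \cite{Hamada2019c} --- so there is no in-paper argument to compare against; your write-up is a valid, self-contained substitute. Both inclusions are handled properly: the inclusion $X^*(f) \subseteq X^{\mathrm w}(f)$ is indeed purely formal, and your midpoint argument for $X^{\mathrm w}(f) \subseteq X^*(f)$ is the natural one. The constant checks out against the paper's definition of strong convexity: at $t = \tfrac{1}{2}$ the penalty term is $\tfrac{1}{2}\alpha_i \cdot \tfrac{1}{4}\norm{x-y}^2 = \tfrac{1}{8}\alpha_i\norm{x-y}^2$, and since the weak domination $f_i(y) \leq f_i(x)$ holds in every coordinate while $\norm{x-y} > 0$, you get the strict inequality $f_i(z) < f_i(x)$ for all $i \in M$, exactly as needed. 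One small sharpening you could note: your argument never uses the full strength of strong convexity, only that the midpoint inequality is strict when $x \neq y$; so the same proof shows $X^*(f) = X^{\mathrm w}(f)$ for any strictly convex (not necessarily continuous) mapping, with strong convexity merely supplying strictness quantitatively. This does not affect correctness; it just shows the lemma holds under a weaker hypothesis than stated.
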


\begin{lemma}\label{thm:closed}
    Let $f: X \to \R^m$ be a continuous mapping, where $X$ is a topological space.
    Then, $X^{\mathrm w}(f)$ is a closed set of $X$.
\end{lemma}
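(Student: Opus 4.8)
The plan is to prove the statement by showing that the complement $X \setminus X^{\mathrm w}(f)$ is open in $X$. By definition, a point $x$ lies in this complement precisely when it is \emph{strictly dominated}, i.e.\ there exists $y \in X$ with $f_i(y) < f_i(x)$ for all $i \in M$; note that such strict inequalities force $f(y) \neq f(x)$, hence $y \neq x$ automatically, so I need not separately track the requirement that the dominating point be distinct from $x$.

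First I would fix a point $x_0 \in X \setminus X^{\mathrm w}(f)$ together with a witness $y \in X$ satisfying $f_i(y) < f_i(x_0)$ for all $i \in M$. For each $i \in M$, the component $f_i: X \to \R$ is continuous, so the set $U_i = \set{x \in X | f_i(y) < f_i(x)}$ is the preimage under $f_i$ of the open half-line $(f_i(y), \infty)$ and is therefore open; moreover $x_0 \in U_i$ by the choice of $y$. Setting $U = \bigcap_{i \in M} U_i$, which is open because $M$ is a finite index set, I obtain an open neighborhood of $x_0$. For every $x \in U$ the same witness $y$ satisfies $f_i(y) < f_i(x)$ for all $i \in M$, so each such $x$ is strictly dominated and hence lies in $X \setminus X^{\mathrm w}(f)$. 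Thus $U \subset X \setminus X^{\mathrm w}(f)$, which proves that the complement is open and therefore that $X^{\mathrm w}(f)$ is closed in $X$.

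I do not expect any genuine obstacle here: the only ingredients are the continuity of each component $f_i$, which turns each strict-dominance condition into an open condition, and the finiteness of $M$, which keeps the intersection of the corresponding open sets open. The observation that makes the argument clean is that a \emph{single} witness $y$ works simultaneously for every point of the neighborhood $U$, so no compactness, metrizability, or further structure on $X$ is needed; only the Hausdorff-free, purely topological notion of openness is used.
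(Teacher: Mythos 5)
Your proof is correct and follows essentially the same route as the paper: both show the complement of $X^{\mathrm w}(f)$ is open by observing that a single dominating witness $y$ for $x_0$ continues to dominate every point in an open neighborhood of $x_0$ obtained from continuity of $f$. The only cosmetic difference is that the paper pulls back one open box in $\R^m$ with thresholds at the midpoints $f_i(x_0) - \ep_i$, whereas you intersect the preimages $f_i^{-1}\left(\left(f_i(y), \infty\right)\right)$ directly, which avoids the halving and is, if anything, slightly cleaner.
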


\begin{proof}[Proof of \cref{thm:closed}]
    For the proof, it is sufficient to show that $X - X^{\mathrm w}(f)$ is open.
    Let $x_0 \in X - X^{\mathrm w}(f)$ be an arbitrary element.
    Then, there exists $\wt{x}_0 \in X$ such that $f_i(\wt{x}_0) < f_i(x_0)$ for any $i \in M$, where $f=(f_1\ld f_m)$.
    Set
    \begin{align*}
        O = \set{(y_1\ld y_m) \in \R^m | f_i(x_0) - \ep_i < y_i\ \mbox{for any $i \in M$}},
    \end{align*}
    where
    \begin{align*}
        \ep_i = \D \frac{f_i(x_0) - f_i(\wt{x}_0)}{2}.
    \end{align*}
    Since $f$ is continuous and $O$ is an open neighborhood of $f(x_0)$, the set $f^{-1}(O)$ is an open neighborhood of $x_0$.
    Since $f^{-1}(O) \subset X - X^{\mathrm w}(f)$, the set $X - X^{\mathrm w}(f)$ is open in $X$.
\end{proof}

\begin{lemma}\label{thm:compact}
    Let $f: \R^n \to \R^m$ be a strongly convex $C^1$ mapping.
    Then, $X^*(f)$ is compact.
\end{lemma}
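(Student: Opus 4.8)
The plan is to prove that $X^*(f)$ is both closed and bounded in $\R^n$, from which compactness follows.

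\emph{Closedness.} This is immediate from the lemmas already established. Since $f$ is of class $C^1$ it is continuous, so $X^{\mathrm w}(f)$ is a closed set by \cref{thm:closed}. By \cref{thm:weak} we have $X^*(f) = X^{\mathrm w}(f)$, and hence $X^*(f)$ is closed.

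\emph{Boundedness.} The idea is to realize every Pareto optimum as the minimizer of a scalarized problem and then bound these minimizers uniformly. First I would note that each $f_i$ has a unique minimizer $p_i$ by \cref{thm:minimum}, which satisfies $(df_i)_{p_i}=0$. Given $x \in X^*(f)$, \cref{thm:necessary} produces $w=(w_1\ld w_m)\in\Delta^{m-1}$ with $\sum_{i}w_i(df_i)_x=0$; writing $g_w=\sum_i w_i f_i$, this says $(dg_w)_x=0$. Since $g_w$ is strongly convex and $C^1$ by \cref{thm:weight}, the inequality in \cref{thm:chara} forces $x$ to be the minimizer of $g_w$. Thus every element of $X^*(f)$ is the minimizer of some $g_w$ with $w\in\Delta^{m-1}$, and it remains to bound such minimizers independently of $w$.

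For the uniform bound I would set $\alpha_0=\min_i\alpha_i>0$, take $K$ to be the (compact) convex hull of $\set{p_1\ld p_m}$, and put $R=\max_i\norm{p_i}$ and $B=\max_i\sup_{y\in K}\abs{f_i(y)}<\infty$. Applying \cref{thm:chara} to each $f_i$ at $p_i$ gives $f_i(x)\geq f_i(p_i)+\frac{\alpha_0}{2}\norm{x-p_i}^2$; summing with the weights $w_i$ and using the elementary identity $\sum_i w_i\norm{x-p_i}^2=\norm{x-\bar p}^2+\sum_i w_i\norm{p_i-\bar p}^2$, where $\bar p=\sum_i w_i p_i\in K$, yields $g_w(x)\geq \sum_i w_i f_i(p_i)+\frac{\alpha_0}{2}\norm{x-\bar p}^2$. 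Testing the minimizer $x$ of $g_w$ against the competitor $\bar p$ gives $g_w(x)\leq g_w(\bar p)$, hence $\frac{\alpha_0}{2}\norm{x-\bar p}^2\leq \sum_i w_i\bigl(f_i(\bar p)-f_i(p_i)\bigr)\leq 2B$, so that $\norm{x}\leq R+\sqrt{4B/\alpha_0}$, a bound uniform in $w$. This shows $X^*(f)$ is bounded, and together with closedness it is compact.

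The convexity estimates and the variance-type identity are routine; the one genuinely load-bearing step is the reduction of an arbitrary Pareto optimum to a scalarized minimizer via \cref{thm:necessary} and \cref{thm:chara}, after which the uniform quadratic lower bound does the work. I do not expect a serious obstacle beyond assembling these pieces carefully.
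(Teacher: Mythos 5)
Your proof is correct, and its closedness half is exactly the paper's: both deduce closedness of $X^*(f)$ from \cref{thm:closed} and \cref{thm:weak}. For boundedness, however, you take a genuinely different route. The paper never scalarizes: writing $x_i$ for the unique minimizer of $f_i$ (your $p_i$), it introduces the compact sets $\Omega_i = \{x \in \R^n : f_i(x_i) + \frac{\alpha_i}{2}\norm{x - x_i}^2 \leq f_i(x_1)\}$ and shows $X^*(f) \subset \bigcup_{i=1}^m \Omega_i$ by contradiction: if $x'$ lay outside every $\Omega_i$, the quadratic growth estimate of \cref{thm:chara} at each $x_i$ (where $(df_i)_{x_i} = 0$) would force $f_i(x') > f_i(x_1)$ for all $i \in M$, so the single competitor $x_1$ would strictly dominate $x'$, contradicting Pareto optimality. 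That argument needs only \cref{thm:minimum}, \cref{thm:chara}, and the definition of a Pareto optimum. You instead first convert an arbitrary Pareto optimum into the minimizer of a scalarization $\sum_{i=1}^m w_i f_i$ via the KKT condition (\cref{thm:necessary}) together with \cref{thm:weight} and \cref{thm:chara} --- precisely the surjectivity argument that the paper postpones to the proof of \cref{thm:homeo}~\cref{thm:homeo1} --- and then bound all such minimizers uniformly in $w$ via the variance identity and a comparison at the barycenter $\bar p$. Both arguments are sound and yield explicit bounds; the paper's is more economical (no KKT, no weighted sums, a one-line domination contradiction), while yours proves the slightly stronger statement that the set of all weighted minimizers, i.e.\ the image of the map $x^*$ in \cref{eq:map}, is uniformly bounded in the weight, and it reuses machinery the paper must develop later anyway.
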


\begin{proof}[Proof of \cref{thm:compact}]
    By \cref{thm:closed,thm:weak}, it follows that $X^*(f)$ is closed.
    Thus, for the proof, it is sufficient to show that $X^*(f)$ is bounded.
    Let $\alpha_i > 0$ be a convexity parameter of $f_i$, where $f = (f_1\ld f_m)$ and $i \in M$.
    By \cref{thm:minimum}, the function $f_i$ has a unique minimizer for any $i \in M$.
    Let $x_i \in \R^n$ be the unique minimizer of $f_i$.
    Set
    \begin{align*}
        \Omega_i = \Set{x \in \R^n | f_i(x_i) + \frac{\alpha_i}{2} \norm{x - x_i}^2 \leq f_i(x_1)}.
    \end{align*}
    Since every $\Omega_i$ is compact, $\Omega = \bigcup_{i = 1}^m \Omega_i$ is also compact.
    Hence, in order to show that $X^*(f)$ is bounded, it is sufficient to show that $X^*(f) \subset \Omega$.
    Suppose that there exists an element $x' \in X^*(f)$ such that $x' \not \in \Omega$.
    Then, it follows that
    \begin{align}\label{eq:com}
        f_i(x_i) + \frac{\alpha_i}{2} \norm{x' - x_i}^2 > f_i(x_1)
    \end{align}
    for any $i \in M$.
    Since $(df_i)_{x_i} = 0$ for any $i \in M$, by \cref{thm:chara}, we have
    \begin{align}\label{eq:com-2}
        f_i(x_i) + \frac{\alpha_i}{2} \norm{x' - x_i}^2 \leq f_i(x').
    \end{align}
    From \cref{eq:com,eq:com-2}, it follows that $f_i(x') > f_i(x_1)$ for any $i \in M$.
    This contradicts $x' \in X^*(f)$.
\end{proof}
Now, we give a mapping from $\Delta^{m - 1}$ into $X^*(f)$, which is introduced in \cite{Hamada2019}.

Let $w = (w_1\ld w_m) \in \Delta^{m - 1}$.
Since $\sum_{i = 1}^m w_i f_i: \R^n \to \R$ is a strongly convex $C^1$ function by \cref{thm:weight}, the function $\sum_{i = 1}^m w_i f_i$ has a unique minimizer by \cref{thm:minimum}.
%By $\arg \min_{x \in \R^n} \prn{\sum_{i = 1}^m w_i f_i(x)}$, we denote the minimizer of $\sum_{i = 1}^m w_i f_i(x)$.
By \cref{thm:sufficient}, this minimizer is contained in $X^*(f)$.
Hence, we can define a mapping $x^*: \Delta^{m - 1} \to X^*(f)$ as follows:
\begin{align}\label{eq:map}
    x^*(w) = \arg \min_{x \in \R^n} \prn{\sum_{i = 1}^m w_i f_i(x)},
\end{align}
where $\arg \min_{x \in \R^n} \prn{\sum_{i = 1}^m w_i f_i(x)}$ is the minimizer of $\sum_{i = 1}^m w_i f_i$.

\begin{lemma}\label{thm:ine}
    Let $f = (f_1, \dots, f_m): \R^n \to \R^m$ be a strongly convex $C^1$ mapping.
    Let $\alpha_i > 0$ be a convexity parameter of $f_i$ and $K_i$ be the maximal value of $F_i: X^*(f) \times X^*(f) \to \R$ defined by $F_i(x, y) = \abs{f_i(x) - f_i(y)}$ for any $i \in M$.
    Then, for any $w = (w_1\ld w_m), \wt{w} = (\wt{w}_1\ld \wt{w}_m) \in \Delta^{m - 1}$, we have that
    \begin{align*}
        \norm{x^*(w) - x^*(\wt{w})} \leq \sqrt{\D \frac{K_0}{\alpha_0} \sum_{i = 1}^m \abs{w_i - \wt{w}_i}},
    \end{align*}
    where $\alpha_0 = \min \set{\alpha_1\ld \alpha_m}$ and $K_0 = \max \set{K_1\ld K_m}$.
\end{lemma}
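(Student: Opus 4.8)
The plan is to compare the two weighted objective functions $\sum_{i=1}^m w_i f_i$ and $\sum_{i=1}^m \wt{w}_i f_i$ at their respective minimizers $x^*(w)$ and $x^*(\wt{w})$, exploiting the quadratic lower bound furnished by strong convexity. By \cref{thm:weight}, both functions are strongly convex $C^1$ functions whose convexity parameters $\sum_{i=1}^m w_i \alpha_i$ and $\sum_{i=1}^m \wt{w}_i \alpha_i$ are each at least $\alpha_0$, since the weights are nonnegative and sum to one. Moreover, $x^*(w)$ and $x^*(\wt{w})$ are by definition the minimizers of these functions, so the corresponding differentials vanish there.

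The key step applies \cref{thm:chara} to each function. Writing $x = x^*(w)$ and $\wt{x} = x^*(\wt{w})$ and using that the differential of $\sum_{i=1}^m w_i f_i$ vanishes at $x$, I would take $y = \wt{x}$ in \cref{thm:chara} (and replace the true convexity parameter by the lower bound $\alpha_0$) to obtain
\begin{equation*}
    \sum_{i=1}^m w_i f_i(x) + \frac{\alpha_0}{2}\norm{\wt{x} - x}^2 \leq \sum_{i=1}^m w_i f_i(\wt{x}),
\end{equation*}
and symmetrically, using that the differential of $\sum_{i=1}^m \wt{w}_i f_i$ vanishes at $\wt{x}$,
\begin{equation*}
    \sum_{i=1}^m \wt{w}_i f_i(\wt{x}) + \frac{\alpha_0}{2}\norm{x - \wt{x}}^2 \leq \sum_{i=1}^m \wt{w}_i f_i(x).
\end{equation*}
Adding the two inequalities cancels the objective values at the minimizers, and regrouping the remaining terms by index reduces the right-hand side to $\sum_{i=1}^m (w_i - \wt{w}_i)(f_i(\wt{x}) - f_i(x))$, yielding
\begin{equation*}
    \alpha_0 \norm{x - \wt{x}}^2 \leq \sum_{i=1}^m (w_i - \wt{w}_i)\prn{f_i(\wt{x}) - f_i(x)}.
\end{equation*}

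To finish, I would bound the right-hand side by $\sum_{i=1}^m \abs{w_i - \wt{w}_i}\,\abs{f_i(\wt{x}) - f_i(x)}$. The crucial observation is that both $x$ and $\wt{x}$ lie in $X^*(f)$, so $\abs{f_i(\wt{x}) - f_i(x)} = F_i(\wt{x}, x) \leq K_i \leq K_0$ by the definitions of $K_i$ and $K_0$; here the existence of the maximal values $K_i$ is ensured by the compactness of $X^*(f)$ from \cref{thm:compact} together with the continuity of each $F_i$. Substituting gives $\alpha_0 \norm{x - \wt{x}}^2 \leq K_0 \sum_{i=1}^m \abs{w_i - \wt{w}_i}$, and dividing by $\alpha_0$ and taking square roots produces the claimed inequality.

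The argument is a H\"older-type pairing followed by a clean cancellation, so I do not anticipate a serious obstacle. The one point requiring care is the addition-and-regrouping step: one must verify that the two equal quadratic terms combine into $\alpha_0\norm{x - \wt{x}}^2$ (not half of it) and that the four component sums collapse into the single sum over $(w_i - \wt{w}_i)(f_i(\wt{x}) - f_i(x))$. Once this bookkeeping is confirmed, the bound by $K_0$ follows immediately from the Pareto optimality of both endpoints.
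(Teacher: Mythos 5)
Your proposal is correct and follows essentially the same route as the paper's proof: apply \cref{thm:chara} at the two minimizers (where the differentials vanish), add the two inequalities, regroup into $\sum_{i=1}^m (w_i - \wt{w}_i)\prn{f_i(\wt{x}) - f_i(x)}$, and bound by $K_0$ using compactness of $X^*(f)$. The only cosmetic difference is that you replace the convexity parameters $\sum_{i=1}^m w_i\alpha_i$ and $\sum_{i=1}^m \wt{w}_i\alpha_i$ by the lower bound $\alpha_0$ before adding (valid, since any smaller positive constant is also a convexity parameter), whereas the paper carries the exact parameters through the addition and only then invokes $\sum_{i=1}^m (w_i+\wt{w}_i)=2$ to obtain $\alpha_0$.
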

\begin{remark}
    In \cref{thm:ine}, the Pareto set $X^*(f)$ is compact by \cref{thm:compact}.
    Hence, for any $i \in M$, the function $F_i$ has the maximal value $K_i$.
\end{remark}

\begin{proof}[Proof of \cref{thm:ine}]
    Let $w, \wt{w} \in \Delta^{m - 1}$ be arbitrary elements.
    By \cref{thm:weight}, the function $\sum_{i = 1}^m w_i f_i: \R^n \to \R$ (resp., $\sum_{i = 1}^m \wt{w}_i f_i: \R^n \to \R$) is a strongly convex function with a convexity parameter $\sum_{i = 1}^m w_i \alpha_i$ (resp., $\sum_{i = 1}^m \wt{w}_i \alpha_i$).
    Since $x^*(w)$ (resp., $x^*(\wt{w})$) is the minimizer of the function $\sum_{i = 1}^m w_i f_i$ (resp., $\sum_{i = 1}^m \wt{w}_i f_i$), we get $d(\sum_{i = 1}^m w_i f_i)_{x^*(w)} = 0$ (resp., $d(\sum_{i = 1}^m \wt{w}_i f_i)_{x^*(\wt{w})} = 0$).
    Thus, by \cref{thm:chara}, we obtain
    {\small
    \begin{align}\label{eq:ine-1}
        \prn{\sum_{i = 1}^m w_i f_i}(x^*(w)) + \frac{\sum_{i = 1}^m w_i \alpha_i}{2}
        \norm{x^*(\wt{w}) - x^*(w)}^2
        &\leq
        \prn{\sum_{i = 1}^m w_i f_i}(x^*(\wt{w})),
        \end{align}
        \begin{align}\label{eq:ine-2}
        \prn{\sum_{i = 1}^m \wt{w}_i f_i}(x^*(\wt{w})) + \frac{\sum_{i = 1}^m \wt{w}_i \alpha_i}{2}
        \norm{x^*(w) - x^*(\wt{w})}^2
        &\leq
        \prn{\sum_{i = 1}^m \wt{w}_i f_i}(x^*(w)).
    \end{align}
    }By \cref{eq:ine-1,eq:ine-2}, we get
    {\small
    \begin{align}\label{eq:ine-3}
        \frac{\sum_{i = 1}^m w_i \alpha_i}{2}
        \norm{x^*(\wt{w}) - x^*(w)}^2
        &\leq
        \sum_{i = 1}^m w_i \prn{f_i(x^*(\wt{w})) - f_i(x^*(w))},
    \end{align}
    \begin{align}\label{eq:ine-4}
        \frac{\sum_{i = 1}^m \wt{w}_i \alpha_i}{2}
        \norm{x^*(\wt{w}) - x^*(w)}^2
        &\leq
        \sum_{i = 1}^m \wt{w}_i \prn{f_i(x^*(w)) - f_i(x^*(\wt{w}))},
    \end{align}
    }respectively.
    By \cref{eq:ine-3,eq:ine-4}, we have
    {\small
    \begin{align*}
        \frac{\sum_{i = 1}^m (w_i + \wt{w}_i) \alpha_i}{2}
        \norm{x^*(\wt{w}) - x^*(w)}^2
        &\leq
        \sum_{i = 1}^m (w_i - \wt{w}_i)
        (
        f_i(x^*(\wt{w})) - f_i(x^*(w))
        ).
    \end{align*}
    }By the inequality above and $\sum_{i = 1}^m(w_i + \wt{w}_i) = 2$, we obtain
    {\small
    \begin{align}\label{eq:ine-5}
        \alpha_0
        \norm{x^*(\wt{w}) - x^*(w)}^2
        &\leq
        \sum_{i = 1}^m (w_i - \wt{w}_i)
        (
        f_i(x^*(\wt{w})) - f_i(x^*(w))
        ).
    \end{align}
    }We also have
    {\small
    \begin{align*}%\label{eq:ine-6}
        \sum_{i = 1}^m \prn{w_i - \wt{w}_i}
        \prn{f_i(x^*(\wt{w})) - f_i(x^*(w))}
        &\leq
        \sum_{i = 1}^m \abs{w_i - \wt{w}_i}
        \abs{f_i(x^*(\wt{w})) - f_i(x^*(w))}
        \\
        &\leq
        \sum_{i = 1}^m \abs{w_i - \wt{w}_i} K_i.
        \\
        &\leq
        K_0 \sum_{i = 1}^m \abs{w_i - \wt{w}_i}.
    \end{align*}
    }By the inequality above and \cref{eq:ine-5}, we obtain
    \begin{align*}
        \alpha_0 \norm{x^*(w) - x^*(\wt{w})}^2
        \leq
        K_0 \sum_{i = 1}^m \abs{w_i - \wt{w}_i}.
    \end{align*}
    Hence, it follows that
    \begin{align*}
        \norm{x^*(w) - x^*(\wt{w})} \leq
        \sqrt{
        \D \frac{K_0}{\alpha_0} \sum_{i = 1}^m \abs{w_i - \wt{w}_i}
        }.
    \end{align*}
\end{proof}

%%%%%%%%%%%%%%%%%%%%%%%%%%%%%%%%%%%%%%%%%%%%%%%%%%%%%%%%%%%%%%%%%%%%%%%%%%%%%%%%
\section{Proof of \texorpdfstring{\cref{thm:main-C1}}{Theorem 2}}\label{sec:mainproof}
%%%%%%%%%%%%%%%%%%%%%%%%%%%%%%%%%%%%%%%%%%%%%%%%%%%%%%%%%%%%%%%%%%%%%%%%%%%%%%%%

First, we give an essential result for the proof of \cref{thm:main-C1} as follows (for the definition of $x^*:\Delta^{m-1}\to X^*(f)$ in \cref{thm:homeo}, see \cref{eq:map}).
\begin{proposition}\label{thm:homeo}
    Let $f:\R^n \to \R^m$ be a strongly convex $C^1$ mapping.
    Then, the following properties hold.
    \begin{enumerate}[$(1)$]
        \item \label{thm:homeo1}
        The mapping $x^*: \Delta^{m - 1} \to X^*(f)$ is surjective and continuous.
        Moreover, if $\rank df_x = m - 1$ for any $x \in X^*(f)$, then $x^*$ is a homeomorphism.
        \item \label{thm:homeo2}
        The mapping $f|_{X^*(f)}: X^*(f) \to \R^m$ is a homeomorphism into the image.
    \end{enumerate}
\end{proposition}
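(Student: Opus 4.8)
The plan is to prove the three assertions in order: surjectivity of $x^*$, continuity of $x^*$, and the two homeomorphism claims. For surjectivity, I would take an arbitrary $x \in X^*(f)$ and produce a weight $w \in \Delta^{m-1}$ with $x^*(w) = x$. By \cref{thm:necessary}, since $x \in X^*(f)$ there exists $w = (w_1\ld w_m) \in \Delta^{m-1}$ with $\sum_{i=1}^m w_i (df_i)_x = 0$, i.e.\ $d(\sum_{i=1}^m w_i f_i)_x = 0$. By \cref{thm:weight} the function $\sum_{i=1}^m w_i f_i$ is strongly convex $C^1$, and by \cref{thm:minimum} it has a unique minimizer; since a critical point of a strongly convex $C^1$ function must be that unique minimizer (a strongly convex $C^1$ function has no other critical points, which follows from \cref{thm:chara}), we get $x = x^*(w)$. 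This establishes surjectivity.

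For continuity, the key tool is the quantitative estimate already proved in \cref{thm:ine}: for all $w, \wt{w} \in \Delta^{m-1}$,
\begin{align*}
    \norm{x^*(w) - x^*(\wt{w})} \leq \sqrt{\D \frac{K_0}{\alpha_0} \sum_{i=1}^m \abs{w_i - \wt{w}_i}}.
\end{align*}
The right-hand side tends to $0$ as $\wt{w} \to w$, so $x^*$ is (uniformly) continuous on $\Delta^{m-1}$; this is essentially immediate from the lemma.

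The substantive part is the homeomorphism claim under the rank hypothesis. Here I would argue that $x^*$ is a continuous bijection from a compact space to a Hausdorff space, whence a homeomorphism. Continuity and surjectivity are in hand, and $\Delta^{m-1}$ is compact while $X^*(f) \subset \R^n$ is Hausdorff, so the only missing ingredient is \emph{injectivity} of $x^*$. This is where I expect the main obstacle to lie. The idea is: if $x^*(w) = x^*(\wt{w}) = x$, then both $w$ and $\wt{w}$ lie in the kernel-type condition $\sum_i w_i (df_i)_x = 0$ with $w \in \Delta^{m-1}$. The rank assumption $\rank df_x = m-1$ forces the space of solutions $\{v \in \R^m \mid \sum_i v_i (df_i)_x = 0\}$ to be one-dimensional (the differentials $(df_1)_x\ld (df_m)_x$ span an $(m-1)$-dimensional space, so their linear dependencies form a line). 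Intersecting this line with the affine hyperplane $\sum_i v_i = 1$ and the positivity constraints of $\Delta^{m-1}$ gives at most one point, forcing $w = \wt{w}$. I would need to check carefully that the solution line does meet $\Delta^{m-1}$ in a single point and handle the boundary/sign bookkeeping.

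For part \ref{thm:homeo2}, the mapping $f|_{X^*(f)}: X^*(f) \to \R^m$ is continuous (restriction of a continuous map), injective by \cref{thm:lem-injective-strongly}, and defined on the compact set $X^*(f)$ (compact by \cref{thm:compact}); a continuous injection from a compact space into a Hausdorff space is a homeomorphism onto its image, which is exactly the claim. The main work throughout is thus concentrated in the injectivity argument of part \ref{thm:homeo1}, where the rank condition must be converted into uniqueness of the KKT weight; the rest assembles standard compact-to-Hausdorff facts together with the already-established \cref{thm:ine}, \cref{thm:lem-injective-strongly}, and \cref{thm:compact}.
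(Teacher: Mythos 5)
Your proposal is correct and follows essentially the same route as the paper's proof: surjectivity via the KKT condition (\cref{thm:necessary}) combined with \cref{thm:chara}, continuity directly from the estimate of \cref{thm:ine}, injectivity under the rank hypothesis by observing that $w$ and $\wt{w}$ both lie in the one-dimensional left kernel of $df_{x^*(w)}$ intersected with $\Delta^{m-1}$ (the affine constraint $\sum_i w_i = 1$ alone already pins down a unique point on that line, so the ``boundary/sign bookkeeping'' you worried about is not needed), and part (2) as a continuous injection (\cref{thm:lem-injective-strongly}) from a compact space into a Hausdorff space. The only cosmetic difference is that you invoke \cref{thm:compact} for compactness of $X^*(f)$, whereas the paper notes it as the continuous image $x^*(\Delta^{m-1})$ from part (1); both are valid.
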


Thus, \cref{thm:main-C1} follows from \cref{thm:homeo} as follows:
Let $I=\set{i_1\ld i_k}$ $(i_1<\cdots <i_k)$ be an arbitrary non-empty subset of $M$ as in \cref{sec:intro}.
Since $f_I: \R^n \to \R^k$ is a strongly convex $C^1$ mapping, $x^*|_{\Delta_I}: \Delta_I \to X^*(f_I)$ is surjective and continuous by \cref{thm:homeo}~\cref{thm:homeo1}.
Hence, the problem of minimizing $f$ is $C^0$ weakly simplicial.
Next, suppose that $\rank df_x = m - 1$ for any $x \in X^*(f)$.
Since
\begin{align*}
    X^*(f_I) = x^*(\Delta_I) \subset x^*(\Delta^{m - 1}) = X^*(f),
\end{align*}
it follows that $\rank (df_I)_x \geq k - 1$ for any $x \in X^*(f_I)$.
By \cref{thm:critical}, it follows that $\rank (df_I)_x = k - 1$ for any $x \in X^*(f_I)$.
Therefore, by \cref{thm:homeo}~\cref{thm:homeo1}, the mapping $x^*|_{\Delta_I}: \Delta_I \to X^*(f_I)$ is a homeomorphism.
Since $X^*(f_I) \subset X^*(f)$, the mapping $f|_{X^*(f_I)}: X^*(f_I) \to \R^m$ is a homeomorphism into the image.
Thus, the problem of minimizing $f$  is $C^0$ simplicial.

By the argument above, in order to complete the proof of \cref{thm:main-C1}, it is sufficient to show \cref{thm:homeo}.
\begin{proof}[Proof of \cref{thm:homeo}~\cref{thm:homeo1}]
    Note that the bijectivity of $x^*$ is shown by the same method as in the proof of \cite{Hamada2019}.
    For the sake of readers' convenience, we give the proof in this paper.
    
    First, we show that $x^*$ is surjective.
    Let $x \in X^*(f)$ be an arbitrary point.
    By \cref{thm:necessary}, there exists $w = (w_1\ld w_m) \in \Delta^{m - 1}$ such that $\sum_{i = 1}^m w_i (df_i)_x = 0$.
    Namely, we get $d(\sum_{i = 1}^m w_i f_i)_x = 0$.
    Since the function $\sum_{i = 1}^m w_i f_i$ is strongly convex, the point $x$ is the unique minimizer of $\sum_{i = 1}^m w_i f_i$ by \cref{thm:chara}.
    This implies $x^*(w) = x$.
    Hence, $x^*$ is surjective.
    
    Second, we show that $x^*$ is continuous.
    Let $\wt{w} = (\wt{w}_1\ld \wt{w}_m) \in \Delta^{m - 1}$ be an arbitrary element.
    For the proof, it is sufficient to show that $x^*$ is continuous at $\wt{w}$.
    Let $\ep$ be an arbitrary positive real number.
    Then, there exists an open neighborhood $V$ of $\wt{w}$ in $\Delta^{m - 1}$ satisfying
    \begin{align*}
        \sqrt{\D \frac{K_0}{\alpha_0} \sum_{i = 1}^m \abs{w_i - \wt{w}_i}} < \ep
    \end{align*}
    for any $w \in V$, where $K_0$ and $\alpha_0$ are defined in \cref{thm:ine}.
    From \cref{thm:ine}, it follows that
    \begin{align*}
        \norm{x^*(w) - x^*(\wt{w})} < \ep
    \end{align*}
    for any $w \in V$.
    
    Finally, we show that $x^*$ is a homeomorphism if $\rank df_x = m - 1$ for any $x \in X^*(f)$.
    Since $x^*$ is surjective and continuous from a compact space $\Delta^{m - 1}$ into a Hausdorff space, for this proof, it is sufficient to show that $x^*$ is injective.
    
    Suppose that $x^*(w) = x^*(\wt{w})$, where $w = (w_1\ld w_m)$ and $\wt{w} = (\wt{w}_1\ld \wt{w}_m)$.
    Since $x^*(w) \in X^*(f)$ is the unique minimizer of $\sum_{i = 1}^m w_i f_i$, we have \[d \prn{\sum_{i = 1}^m w_i f_i}_{x^*(w)} = 0.\]
    Namely, we get
    \begin{align*}
        (w_1\ld w_m) df_{x^*(w)} = (0\ld 0).
    \end{align*}
    By the above argument, we also have $(\wt{w}_1\ld \wt{w}_m) df_{x^*(\wt{w})} = (0\ld 0)$.
    Since $x^*(w) = x^*(\wt{w})$, we obtain
    \begin{align*}
        (\wt{w}_1\ld \wt{w}_m) df_{x^*(w)} = (0\ld 0).
    \end{align*}
    Since $m = \dim \ke df_{x^*(w)} + \rank df_{x^*(w)}$ and $\rank df_{x^*(w)} = m - 1$, it follows that $\dim \ke df_{x^*(w)} = 1$.
    Since $w, \wt{w} \in \ke df_{x^*(w)} \cap \Delta^{m - 1}$, we obtain $w = \wt{w}$.
\end{proof}
%%%%%%%%%%%%%%%%%%%%%%%%
\begin{proof}[Proof of \cref{thm:homeo}~\cref{thm:homeo2}]
    By \cref{thm:homeo}~\cref{thm:homeo1}, $X^*(f)$ $(= x^*(\Delta^{m - 1}))$ is compact.
    By \cref{thm:lem-injective-strongly}, $f|_{X^*(f)}: X^*(f) \to \R^m$ is injective.
    Since $f|_{X^*(f)}: X^*(f) \to f(X^*(f))$ is a bijective and continuous mapping from a compact space into a Hausdorff space, the mapping $f|_{X^*(f)}$ is a homeomorphism onto the image.
\end{proof}

Finally, as supplements to this section, we give the following two remarks.
\begin{remark}
    In \cref{thm:homeo}~\cref{thm:homeo1}, the assumption that $\rank df_x = m - 1$ for any $x \in X^*(f)$ yields $m - 1 \leq n$.
    On the other hand, when $m - 1 > n$, it is impossible that $x^*: \Delta^{m - 1} \to X^*(f) (\subset \R^n)$ is a homeomorphism by the invariance of domain theorem.
    For the invariance of domain theorem, see \cite{Hatcher2002}.
\end{remark}

\begin{remark}
    The mapping $x^*$ in \cref{thm:homeo}~\cref{thm:homeo1} is not necessarily differentiable as follows.
    Let $f = (f_1, f_2): \R \to \R^2$ be the mapping defined in \cref{ex:notC2} of \cref{sec:example}.
    Let $\varphi: [0, 1] \to \Delta^1$ be the diffeomorphism defined by $\varphi(w_1) = (w_1, 1 - w_1)$.
    Since if $x^*(w_1, w_2) = x$ then $d(w_1 f_1 + w_2 f_2)_x = 0$, we can easily obtain the following:
    \begin{align*}
        x^* \circ \varphi(w_1) & =
        \begin{cases}
            2w_1 & \text{if $0 \leq w_1 < \frac{1}{2}$},\\
            \D \frac{w_1 + 1}{-w_1 + 2} & \text{if $\frac{1}{2} \leq w_1 \leq 1$}.\\
        \end{cases}
    \end{align*}
    Since
    \begin{align*}
        \lim_{h \to +0}
        \D \frac{(x^* \circ \varphi)
        \prn{\frac{1}{2} + h}
        -(x^* \circ \varphi)
        \prn{\frac{1}{2}}}{h}
        & = \D \frac{4}{3},\\
        \lim_{h \to -0} \D \frac{(x^* \circ \varphi)
        \prn{\frac{1}{2} + h} - \prn{x^* \circ \varphi}
        \prn{\frac{1}{2}}}{h}& = 2,
    \end{align*}
    the mapping $x^* \circ \varphi$ is not differentiable at $w_1 = \frac{1}{2}$.
\end{remark}
\begin{remark}\label{rem:description}
    The mapping $x^*$ in \cref{thm:homeo}~\cref{thm:homeo1} is useful for describing a Pareto set as follows.
    
    Let $f:\R^3\to \R^3$ be the mapping defined by \cref{ex:standard1}.
    Let $w=(w_1,w_2,w_3)\in \Delta^2$.
    Since $x^*(w)$ is a minimizer of $\sum_{i=1}^3w_if_i$ by the definition of $x^*$, we have $d(\sum_{i=1}^3w_if_i)_{x^*(w)}=0$.
    Thus, by simple calculations, $x^*:\Delta^2\to X^*(f)$ can be described as follows:
    \begin{align*}
        x^*(w_1,w_2,w_3)=\left(\frac{aw_1}{aw_1+(1-w_1)}, w_2, w_3\right).
    \end{align*}
    Since $x^*(\Delta^2)=X^*(f)$, the Pareto set $X^*(f)$ can be described as follows: 
    \begin{align*}
        X^*(f)=\Set{\left(\frac{aw_1}{aw_1+(1-w_1)}, w_2, w_3\right)\in \R^3|(w_1,w_2,w_3)\in \Delta^2}.
    \end{align*}
\end{remark}

%%%%%%%%%%%%%%%%%%%%%%%%%%%%%%%%%%%%%%%%%%%%%%%%%%%%%%%%%%%%%%%%%%%%%%%%%%%%%%%%
\section{Preliminaries for the proof of \texorpdfstring{\cref{thm:maingeneric}}{Theorem 4}}\label{sec:app}
%%%%%%%%%%%%%%%%%%%%%%%%%%%%%%%%%%%%%%%%%%%%%%%%%%%%%%%%%%%%%%%%%%%%%%%%%%%%%%%%
In this section, unless otherwise stated, all manifolds are without boundary and assumed to have countable bases.

The purpose of this section is to establish the specialized transversality theorem (\cref{thm:transverse}) for generically linearly perturbed strongly convex mappings, which is an essential tool for the proof of \cref{thm:maingeneric}.
First, we prepare the following two lemmas.
\begin{lemma}[{\cite[Theorem~2.1.11 (p.~65)]{Nesterov2004}}]\label{thm:hess_strong}
    Let $U$ be a convex open subset of $\R^n$ $(U \neq \emptyset)$.
    A $C^2$ function $f: U \to \R$ is strongly convex with a convexity parameter $\alpha > 0$ if and only if $m(f)_x \geq \alpha$ for any $x \in U$, where $m(f)_x$ is the minimal eigenvalue of the Hessian matrix of $f$ at $x$.
\end{lemma}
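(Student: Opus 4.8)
The plan is to strip off the quadratic term and reduce the whole statement to the classical second-order characterization of ordinary convexity, using \cref{thm:norm_strong}. By \cref{thm:norm_strong}, the function $f$ is strongly convex with convexity parameter $\alpha$ if and only if $g(x) = f(x) - \frac{\alpha}{2}\norm{x}^2$ is convex on $U$. Since $f$ is of class $C^2$, so is $g$, and its Hessian satisfies $\nabla^2 g(x) = \nabla^2 f(x) - \alpha I_n$ for every $x \in U$, where $I_n$ is the $n \times n$ identity matrix. Thus the entire problem is transferred to deciding when the $C^2$ function $g$ is convex on the convex open set $U$.

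First I would establish the standard fact that a $C^2$ function $g$ on a convex open set $U$ is convex if and only if its Hessian $\nabla^2 g(x)$ is positive semidefinite for every $x \in U$. The key reduction is to one real variable: for fixed $x \in U$ and $v \in \R^n$, the segment through $x$ in direction $v$ lies in $U$ for small parameters because $U$ is open, and $g$ is convex if and only if every one-variable restriction $\phi(t) = g(x + t v)$ is convex on its (open-interval) domain. A $C^2$ function of one variable is convex if and only if its second derivative is nonnegative, and a direct computation gives $\phi''(t) = v^\top \nabla^2 g(x + t v)\, v$. Evaluating at $t = 0$ and letting $x$ and $v$ range shows that convexity of $g$ is equivalent to $v^\top \nabla^2 g(x) v \geq 0$ for all $x \in U$ and all $v \in \R^n$, that is, to $\nabla^2 g(x) \succeq 0$ throughout $U$.

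It then remains only to rewrite this matrix inequality in terms of eigenvalues. Since $\nabla^2 f(x)$ is symmetric, the spectral theorem guarantees that $\nabla^2 g(x) = \nabla^2 f(x) - \alpha I_n \succeq 0$ holds if and only if every eigenvalue of $\nabla^2 f(x)$ is at least $\alpha$, which is exactly $m(f)_x \geq \alpha$. Chaining the equivalences, namely strong convexity of $f$ with parameter $\alpha$ $\iff$ convexity of $g$ $\iff$ $\nabla^2 g(x) \succeq 0$ on $U$ $\iff$ $m(f)_x \geq \alpha$ on $U$, yields the claim; both directions of the desired biconditional follow at once because every link in this chain is itself a biconditional.

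I expect the only delicate point to be the careful justification of the one-variable convexity criterion together with the passage between ``$g$ is convex'' and ``every line restriction of $g$ is convex.'' The former rests on the mean value theorem (or a Taylor expansion with integral remainder), and the latter uses the openness of $U$ to ensure that the relevant segments, and small neighborhoods of them in the chosen directions, stay inside the domain so that $\phi$ is genuinely defined and twice differentiable on an open interval. By contrast, the final step from positive semidefiniteness to the eigenvalue bound is routine linear algebra and should present no real difficulty.
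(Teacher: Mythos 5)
Your proof is correct, but note that the paper itself offers no proof of this lemma: it is imported verbatim from Nesterov's textbook (Theorem~2.1.11 there), so the comparison is really with the textbook argument. Nesterov's proof works directly from the first-order characterization of strong convexity (the paper's \cref{thm:chara}): one direction bounds the quadratic term in a Taylor expansion of $f$ using $\nabla^2 f \succeq \alpha I_n$ along the segment, and the converse extracts the Hessian bound from the first-order inequality by a limiting argument. Your route is genuinely different and arguably better suited to this paper: you reduce strong convexity of $f$ to ordinary convexity of $g(x)=f(x)-\frac{\alpha}{2}\norm{x}^2$ via \cref{thm:norm_strong}, apply the classical second-order criterion for convexity of a $C^2$ function on a convex open set (proved by restriction to lines, $\phi''(t)=v^\top \nabla^2 g(x+tv)\,v$, and the one-variable second-derivative test), and finish with the spectral-theorem observation that $\nabla^2 f(x)-\alpha I_n\succeq 0$ is exactly $m(f)_x\geq\alpha$. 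This makes the lemma self-contained within the paper's own toolkit: \cref{thm:norm_strong} is proved in \cref{sec:norm_strong} using only the algebraic identity of \cref{thm:norm_strong2}, so there is no circularity, and everything else is elementary calculus and linear algebra. The delicate points you flag are the right ones --- the equivalence between convexity of $g$ and convexity of all its line restrictions, and the fact that openness and convexity of $U$ make each $\phi(t)=g(x+tv)$ a $C^2$ function on an open interval --- and with those justified, every link in your chain is a biconditional, so both directions of the lemma follow at once.
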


\begin{lemma}[\cite{Hamada2019}]\label{thm:preserve_strong}
    Let $f: \R^n \to \R^m$ be a strongly convex mapping.
    Then, for any $\pi \in \mathcal{L}(\R^n, \R^m)$, the mapping $f + \pi: \R^n \to \R^m$ is also strongly convex.
\end{lemma}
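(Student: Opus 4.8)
The plan is to reduce the statement to the scalar case and then invoke the characterization of strong convexity in \cref{thm:norm_strong}. Write $f = (f_1 \ld f_m)$ and $\pi = (\pi_1 \ld \pi_m)$, where each $\pi_i : \R^n \to \R$ is a linear functional. Since strong convexity of a mapping means, by definition, strong convexity of each of its components, it suffices to show that $f_i + \pi_i : \R^n \to \R$ is strongly convex for every $i \in M$.

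So I would fix $i \in M$ and let $\alpha_i > 0$ be a convexity parameter of $f_i$. By \cref{thm:norm_strong} applied with $X = \R^n$, the function $g_i : \R^n \to \R$ given by $g_i(x) = f_i(x) - \frac{\alpha_i}{2}\norm{x}^2$ is convex. The key observation is then that
\begin{align*}
    (f_i + \pi_i)(x) - \frac{\alpha_i}{2}\norm{x}^2 = g_i(x) + \pi_i(x),
\end{align*}
and that $g_i + \pi_i$ is convex, being the sum of the convex function $g_i$ and the linear (hence convex) functional $\pi_i$. Applying \cref{thm:norm_strong} in the reverse direction shows that $f_i + \pi_i$ is strongly convex with the same convexity parameter $\alpha_i$. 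As $i \in M$ was arbitrary, this proves that $f + \pi$ is strongly convex.

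Alternatively, one could bypass \cref{thm:norm_strong} and argue directly from the definition: using the linearity $\pi_i(t x + (1 - t) y) = t \pi_i(x) + (1 - t) \pi_i(y)$, the added linear term splits off additively, and the quadratic defect term $-\frac{1}{2}\alpha_i t (1 - t)\norm{x - y}^2$ in the defining inequality for $f_i$ carries over verbatim to $f_i + \pi_i$. Either way, there is essentially no obstacle; the only point deserving a moment's care is the elementary fact that a linear functional is convex, so that adding it preserves convexity and leaves the convexity parameter unchanged.
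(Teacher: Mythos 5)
Your proof is correct. Note that the paper itself offers no proof of this lemma---it is stated as a quotation from \cite{Hamada2019}---so there is no in-paper argument to compare against; either of your two routes is a complete and valid proof. Of the two, the direct verification from the definition is the more economical: since $\pi_i(t x + (1-t)y) = t\pi_i(x) + (1-t)\pi_i(y)$, the linear term splits off on both sides of the defining inequality and the defect term $-\frac{1}{2}\alpha_i t(1-t)\norm{x-y}^2$ is untouched, which immediately shows that $f_i + \pi_i$ is strongly convex with the \emph{same} convexity parameter $\alpha_i$; the detour through \cref{thm:norm_strong} reaches the same conclusion but needs the auxiliary observation that adding a linear (hence convex) functional to a convex function preserves convexity.
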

For the statement and the proof of \cref{thm:transverse}, we prepare some definitions.
Let $U$ be a non-empty open set of $\R^n$ and $J^1(U, \R^m)$ be the space of $1$-jets of mappings of $U$ into $\R^m$.
Then, note that $J^1(U, \R^m)$ is a $C^\infty$ manifold.
For a given $C^r$ mapping $f: U \to \R^m$ $(r \geq 2)$, the mapping $j^1 f: U \to J^1(U, \R^m)$ is defined by $x \mapsto j^1 f(x)$.
Then, notice that $j^1 f: U \to J^1(U, \R^m)$ is of class $C^{r - 1}$.
Further, set
\begin{align*}
    \Sigma^k = \set{j^1 f(0) \in J^1(n, m) | \corank Jf(0) = k},
\end{align*}
where $J^1(n,m)=\set{j^1f(0)|f:(\R^n,0)\to (\R^m,0)}$, $\corank Jf(0) = \min \set{n, m} - \rank Jf(0)$ and $k = 1 \ld \min \set{n, m}$.
Set
\begin{align*}
    \Sigma^k(U, \R^m) = U \times \R^m \times \Sigma^k.
\end{align*}
Then, the set $\Sigma^k(U, \R^m)$ is a submanifold of $J^1(U, \R^m)$ satisfying
\begin{align*}
    \codim \Sigma^k(U, \R^m)
        &= \dim J^1(U, \R^m) - \dim \Sigma^k(U, \R^m)\\
        &= (n - v + k)(m - v + k),
\end{align*}
where $v = \min \set{n, m}$.
For details on $j^1 f: U \to J^1(U, \R^m)$, $\Sigma^k$ and $\Sigma^k(U, \R^m)$, see \cite{Golubitsky1974}.

Now, we recall the definition of transversality.
\begin{definition}\label{def:transverse}
    \textrm{
    Let $X$ and $Y$ be $C^r$ manifolds, and $Z$ be a $C^r$ submanifold of $Y$ ($r \geq 1$).
    Let $f: X \to Y$ be a $C^1$ mapping.
    \begin{enumerate}
        \item
        We say that $f: X \to Y$ is \emph{transverse} to $Z$ \emph{at $x\in X$} if $f(x) \not \in Z$ or in the case $f(x) \in Z$, the following holds:
        \begin{align*}
            df_x(T_xX) + T_{f(x)}Z = T_{f(x)}Y.
        \end{align*}
        \item
        We say that $f: X \to Y$ is \emph{transverse} to $Z$ if for any $x \in X$, the mapping $f$ is transverse to $Z$ at $x$.
    \end{enumerate}
    }
\end{definition}
The following is the basic transversality result, which is a key lemma for the proof of \cref{thm:transverse}.
\begin{lemma}[\cite{Golubitsky1974,Ichiki2019}]\label{thm:basic}
    Let $X$, $A$ and $Y$ be $ C^r$ manifolds, $Z$ be a $C^r$ submanifold of $Y$ and $\Gamma: X \times A \to Y$ be a $C^r$ mapping.
    If $r > \max \set{\dim X - \codim Z, 0}$ and $\Gamma$ is transverse to $Z$, then there exists a Lebesgue measure zero subset $\Sigma$ of $A$ such that for any $a \in A - \Sigma$, the $C^r$ mapping $\Gamma_a: X \to Y$ is transverse to $Z$, where $\codim Z = \dim Y - \dim Z$ and $\Gamma_a(x) = \Gamma(x, a)$, $x\in X$.
\end{lemma}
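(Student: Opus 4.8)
The plan is to recognize Lemma~\ref{thm:basic} as the classical parametric transversality theorem and to sketch its standard proof, which combines the preimage theorem with Sard's theorem. The guiding idea is that the exceptional set $\Sigma$ should be taken to be the set of critical values of the projection of $\Gamma^{-1}(Z)$ onto the parameter space $A$; the whole argument then reduces to two facts, namely that this critical-value set has measure zero, and that a regular value yields a transverse slice $\Gamma_a$.

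First I would apply the preimage theorem to $\Gamma$. Since $\Gamma: X \times A \to Y$ is transverse to $Z$, the set $W = \Gamma^{-1}(Z)$ is a $C^r$ submanifold of $X \times A$ with $\codim W = \codim Z$, so that $\dim W = \dim X + \dim A - \codim Z$. Let $\pi: W \to A$ be the restriction to $W$ of the canonical projection $X \times A \to A$; this is a $C^r$ mapping, and the dimension count gives $\dim W - \dim A = \dim X - \codim Z$.

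Next I would invoke Sard's theorem for $\pi$. This is exactly where the smoothness hypothesis is consumed: the $C^r$ form of Sard's theorem applies to a map whose domain and codomain satisfy $r > \max\set{\dim(\mathrm{domain}) - \dim(\mathrm{codomain}),\, 0}$, and by the computation above this reads $r > \max\set{\dim X - \codim Z, 0}$, precisely the assumed inequality. Hence the set $\Sigma$ of critical values of $\pi$ is a Lebesgue measure zero subset of $A$, and I would take this $\Sigma$ as the exceptional set in the statement.

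The heart of the proof, and the step I expect to be the main obstacle, is the claim that for every regular value $a \in A - \Sigma$ of $\pi$ the slice $\Gamma_a: X \to Y$ is transverse to $Z$ in the sense of \cref{def:transverse}; this requires a careful chase in tangent spaces. Fixing $x \in X$ with $\Gamma_a(x) = y \in Z$, so that $(x,a) \in W$, transversality of $\Gamma$ identifies $T_{(x,a)}W = (d\Gamma_{(x,a)})^{-1}(T_y Z)$, and for arbitrary $v \in T_y Y$ it lets me write $v = d\Gamma_{(x,a)}(\xi, \eta) + z$ with $\xi \in T_x X$, $\eta \in T_a A$, and $z \in T_y Z$. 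Regularity of $a$ makes $d\pi_{(x,a)}: T_{(x,a)}W \to T_a A$ surjective, so there is $(\xi', \eta) \in T_{(x,a)}W$ with the same $A$-component $\eta$, whence $d\Gamma_{(x,a)}(\xi', \eta) \in T_y Z$ by definition of $W$. Subtracting gives $v = d\Gamma_{(x,a)}(\xi - \xi', 0) + \prn{d\Gamma_{(x,a)}(\xi', \eta) + z}$, where the first term equals $d(\Gamma_a)_x(\xi - \xi')$ and the parenthesized term lies in $T_y Z$. Thus $v \in \img d(\Gamma_a)_x + T_y Z$, and since $v$ was arbitrary we conclude $d(\Gamma_a)_x(T_x X) + T_y Z = T_y Y$, so $\Gamma_a$ is transverse to $Z$ at $x$; as $x$ was arbitrary, $\Gamma_a$ is transverse to $Z$, which completes the proof.
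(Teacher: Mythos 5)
Your proof is correct and is essentially the argument the paper relies on: the paper does not prove \cref{thm:basic} itself but defers to \cite{Golubitsky1974,Ichiki2019}, where exactly this parametric-transversality argument (preimage theorem applied to $\Gamma$, projection of $\Gamma^{-1}(Z)$ to the parameter space, Sard's theorem, and the tangent-space chase showing that regular values yield transverse slices) is carried out. Your invocation of the $C^r$ Sard theorem is precisely where the hypothesis $r > \max \set{\dim X - \codim Z, 0}$ is consumed, matching the paper's remark that the $C^\infty$ proof of \cite{Golubitsky1974} extends to the $C^r$ case by the same method.
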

In \cite{Golubitsky1974}, \cref{thm:basic} is shown in the case that all manifolds and mappings are of class $C^\infty$.
By the same method, \cref{thm:basic} can be shown (cf.~\cite{Ichiki2019}).

%The following is a specialized transversality theorem on generic linear perturbations of strongly convex mappings.
\begin{proposition}\label{thm:transverse}
    Let $f: U \to \R^m$ be a strongly convex $C^r$ mapping, where $U$ is a convex open subset of $\R^n$ $(U \neq \emptyset)$.
    Let $s$ be an arbitrary integer satisfying $1 \leq s \leq m$, and $k$ be an arbitrary integer satisfying $1 \leq k \leq \min \set{n, m}$.
    If
    \begin{align*}
        r > \max \set{n - \codim \Sigma^k(U, \R^m), 0} + 1,
    \end{align*}
    then there exists a Lebesgue measure zero subset $\Sigma$ of $\mathcal{L}(\R^n, \R^m)_s$ such that for any $\pi \in \mathcal{L}(\R^n, \R^m)_s - \Sigma$, the mapping $j^1(f + \pi): U \to J^1(U, \R^m)$ is transverse to $\Sigma^k(U, \R^m)$.
\end{proposition}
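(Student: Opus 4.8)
The plan is to realize the conclusion as a single application of the basic transversality lemma (\cref{thm:basic}) to the family parametrized by $\mathcal{L}(\R^n,\R^m)_s$. Define $\Gamma: U \times \mathcal{L}(\R^n,\R^m)_s \to J^1(U,\R^m)$ by $\Gamma(x,\pi) = j^1(f+\pi)(x)$. Since $f$ is of class $C^r$ with $r\geq 2$, the jet extension $j^1(f+\pi)$ is of class $C^{r-1}$ in $x$, and because $f+\pi$ depends affinely on $\pi$, the map $\Gamma$ is of class $C^{r-1}$ jointly in $(x,\pi)$. Under the standard identification $J^1(U,\R^m)\cong U\times\R^m\times\mathcal{L}(\R^n,\R^m)$ one has $\Gamma(x,\pi)=(x,\ (f+\pi)(x),\ Jf(x)+\pi)$, where $\pi$ is identified with its constant Jacobian matrix, and $\Sigma^k(U,\R^m)=U\times\R^m\times\Sigma^k$.

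First I would reduce transversality of $\Gamma$ to $\Sigma^k(U,\R^m)$ to a statement about the derivative factor alone. Because $\Sigma^k(U,\R^m)$ contains the entire $U$- and $\R^m$-directions, transversality at a point $(x,\pi)$ with $\Gamma(x,\pi)\in\Sigma^k(U,\R^m)$ follows once the $\mathcal{L}(\R^n,\R^m)$-component of $d\Gamma_{(x,\pi)}$ is surjective onto all of $\mathcal{L}(\R^n,\R^m)$: indeed, any normal direction $(0,0,W)$ is then reached by subtracting from a suitable $d\Gamma$-vector an element of the $U\times\R^m$ summand of $T\Sigma^k(U,\R^m)$. So I would aim to prove this full surjectivity, which makes $\Gamma$ transverse to $\Sigma^k(U,\R^m)$ at every point.

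The crux, and the main obstacle, is that the perturbation $\pi$ has vanishing $s$-th component, so varying $\pi$ alone only reaches matrices whose $s$-th row is zero; the missing $s$-th row must be supplied by varying the source point $x$. Here strong convexity is decisive. Writing $H_i(x)$ for the Hessian of $f_i$ at $x$, a variation $\dot x\in\R^n$ changes the $i$-th row of the derivative factor by $\dot x^{\top}H_i(x)$, while a variation $\dot\pi\in\mathcal{L}(\R^n,\R^m)_s$ adds $\dot\pi$ (with zero $s$-th row). By \cref{thm:hess_strong}, the minimal eigenvalue of $H_s(x)$ is at least the convexity parameter $\alpha_s>0$, so $H_s(x)$ is positive definite and invertible. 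Thus, to hit an arbitrary target $W\in\mathcal{L}(\R^n,\R^m)$ with rows $w_1,\dots,w_m$, I would first solve $\dot x^{\top}H_s(x)=w_s$ uniquely via $\dot x^{\top}=w_s H_s(x)^{-1}$, and then set $(\dot\pi)_i=w_i-\dot x^{\top}H_i(x)$ for $i\neq s$ and $(\dot\pi)_s=0$; this $(\dot x,\dot\pi)$ is admissible and maps to $W$, giving the desired surjectivity.

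Finally I would invoke \cref{thm:basic} with $X=U$, $A=\mathcal{L}(\R^n,\R^m)_s$, $Y=J^1(U,\R^m)$, $Z=\Sigma^k(U,\R^m)$, and the $C^{r-1}$ map $\Gamma$, all of whose manifolds are $C^{\infty}$. The hypothesis $r>\max\set{n-\codim\Sigma^k(U,\R^m),0}+1$ is precisely $r-1>\max\set{\dim U-\codim Z,0}$, which is the regularity condition of \cref{thm:basic} for the map $\Gamma$ of class $C^{r-1}$. The lemma then produces a Lebesgue measure zero subset $\Sigma$ of $\mathcal{L}(\R^n,\R^m)_s$ such that $\Gamma_\pi=j^1(f+\pi)$ is transverse to $\Sigma^k(U,\R^m)$ for every $\pi\in\mathcal{L}(\R^n,\R^m)_s-\Sigma$, which is the assertion.
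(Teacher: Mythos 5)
Your proposal is correct and takes essentially the same route as the paper: the same parametrized family $\Gamma(x,\pi)=j^1(f+\pi)(x)$ on $U\times\mathcal{L}(\R^n,\R^m)_s$, the same appeal to \cref{thm:basic} under the identical regularity count $r-1>\max\set{n-\codim\Sigma^k(U,\R^m),0}$, and the same decisive use of strong convexity via \cref{thm:hess_strong}, namely that the invertible Hessian of the unperturbed component $f_s$ lets the $x$-variation supply the $s$-th row of the jet's derivative part while the perturbation supplies the remaining rows. The only differences are presentational: the paper reduces to $s=1$, treats $m=1$ separately, and verifies transversality by computing the rank of an explicit $(n+m+nm)$-row matrix $R$, whereas you argue invariantly by solving $\dot x^{\top}H_s(x)=w_s$ and absorbing the other rows into $\dot\pi$, which handles all $s$ and $m$ uniformly.
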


\begin{remark}
    We give an example such that \cref{thm:transverse} does not hold without the hypothesis of strong convexity.
    Let $f = (f_1, f_2): \R^2 \to \R^2$ be the mapping defined by $f_1(x_1, x_2) = 0$ and $f_2(x_1, x_2) = x_1^2 + x_2^2$.
    Note that $f_1$ is not strongly convex by \cref{thm:norm_strong}.
    Let $\pi = (\pi_1, \pi_2) \in \mathcal{L}(\R^2, \R^2)_1$ be an arbitrary element.
    Then, it follows that $j^1(f + \pi)(p) \in \Sigma^2(\R^2, \R^2)$ and $\rank d(j^1(f + \pi))_p \leq 2$, where $p$ is the unique minimizer of $f_2 + \pi_2$.
    Since $\codim \Sigma^2(\R^2, \R^2) = 4$, the mapping $j^1(f + \pi)$ is not transverse to $\Sigma^2(\R^2, \R^2)$.
\end{remark}

\begin{proof}[Proof of \cref{thm:transverse}]
    In the case $m=1$, \cref{thm:transverse} clearly holds by \cref{thm:hess_strong}.
    
    Hence, we will consider the case $m\geq 2$.
    For a positive integer $\ell$, we denote the $\ell \times \ell$ unit matrix by $E_\ell$.
    For simplicity, set
    \begin{align*}
    A = \mathcal{L}(\R^n, \R^m)_s.
    \end{align*}
    In order to show \cref{thm:transverse}, it is sufficient to give the proof in the case $s = 1$.
    
    Let $\Gamma: U \times A \to J^1(U, \R^m)$ be the $C^{r - 1}$ mapping defined by
    \begin{align*}
        \Gamma(x, \pi) = j^1(f + \pi)(x).
    \end{align*}
    Note that $r - 1 > \max \set{n - \codim \Sigma^k(U, \R^m), 0}$.
    If $\Gamma$ is transverse to $\Sigma^k(U, \R^m)$, then there exists a Lebesgue measure zero subset $\Sigma$ of $A$ such that for any $\pi \in A-\Sigma$, the mapping $\Gamma_\pi: U \to J^1(U, \R^m)$ is transverse to $\Sigma^k(U, \R^m)$ by \cref{thm:basic}, where $\Gamma_\pi(x) = \Gamma(x, \pi)$, $x\in U$.
    Thus, in order to finish the proof, it is sufficient to show that $\Gamma$ is transverse to $\Sigma^k(U, \R^m)$.
    Let $(\wt{x}, \wt{\pi}) \in U \times A$ be an arbitrary element satisfying $\Gamma(\wt{x}, \wt{\pi}) \in \Sigma^k(U, \R^m)$.
    Then, it is sufficient to show that
    \begin{align}\label{eq:transverse1}
        \dim \prn{d \Gamma_{\prn{\wt{x}, \wt{\pi}}} \prn{T_{\prn{\wt{x}, \wt{\pi}}}(U \times A)} + T_{\Gamma(\wt{x}, \wt{\pi})}\Sigma^k(U, \R^m)}
        = n + m + nm.
    \end{align}
    
    Let $(a_{ij})_{1 \leq i \leq m, 1 \leq j \leq n}$ be a representing matrix of a linear mapping $\pi \in A$.
    Since $s = 1$, note that $a_{1j} = 0$ for any $j$ $(1 \leq j \leq n)$.
    Thus, $f + \pi: U \to \R^m$ is given as follows:
    \begin{align*}%\label{eq:jet2}
        (f + \pi)(x) = \prn{
            f_1(x), f_2(x) + \sum_{j = 1}^n a_{2j} x_j
            \ld
            f_m(x) + \sum_{j = 1}^n a_{m j} x_j
        },
    \end{align*}
    where $f = (f_1\ld f_m)$, $x = (x_1\ld x_n)$ and $(a_{21}\ld a_{2n}\ld a_{m 1}\ld a_{m n}) \in (\R^n)^{m - 1}$.
    
    Hence, the mapping $\Gamma$ is given by
    \begin{align*}
    &\Gamma(x, \pi)
    \\
    &=\left(x, (f + \pi)(x), \frac{\partial f_1}{\partial x_1}(x)\ld \frac{\partial f_1}{\partial x_n}(x), \right.
    \\
    &\qquad \left. \frac{\partial f_2}{\partial x_1}(x) + a_{21}
    \ld
    \frac{\partial f_2}{\partial x_n}(x) + a_{2n}
    \lld
    \frac{\partial f_m}{\partial x_1}(x) + a_{m1}
    \ld
    \frac{\partial f_m}{\partial x_n}(x) + a_{mn}
    \right).
    \end{align*}
    The Jacobian matrix of $\Gamma$ at $\prn{\wt{x}, \wt{\pi}}$ is as follows:
    \begin{align*}
    J \Gamma_{(\wt{x}, \wt{\pi})} =
    \left(
    \begin{array}{@{\,}c@{\,\,}|@{\,\,}c@{\,\,\,}c@{\,\,\,}c@{\,\,\,}c@{\,\,\,}c}
    E_n       &        & &            \\
     \ast         &        &         \bigzerol        &      \\
    H(f_1)_{\wt{x}}       &           &          & \\
    \\[-3.3mm] \hline \\[-3.3mm]
     \ast    &  E_n             &    & \bigzerol        \\
     \vdots   &   \bigzerol  & \ddots &      \\
       \ast     &  &  &   E_n \\
    \end{array}
    \right),
    %_{(t, \alpha)=(\varphi_{\widetilde{\lambda}}(\widetilde{q}), \widetilde{\alpha})},
    \end{align*}
    where $H(f_1)_{\wt{x}}$ is the Hessian matrix of $f_1$ at $\wt{x}$.
    Notice that there are $m - 1$ copies of $E_n$ in the lower right partition of the above description of $J \Gamma_{(\wt{x}, \wt{\pi})}$.
    Since $\Sigma^k(U, \R^m)$ is a sub-bundle of $J^1(U, \R^m)$ with the fiber $\Sigma^k$, in order to show \cref{eq:transverse1}, it is sufficient to show that the matrix $R$ has rank $n + m + nm$:
    \begin{align*}
    R=
    \left(
    \begin{array}{@{\,}c@{\,\,}|@{\,\,}c@{\,\,}|@{\,\,\,}c@{\,\,\,}c@{\,\,\,}c@{\,\,\,}c}
    E_{n + m}     &    \ast    &             &   0       &   \\
    \hline    0  &   H(f_1)_{\wt{x}}  & & 0 &  \\
     \hline     
      &      \ast        & E_n      &       &   \bigzerol \\
   \bigzerol     &  \vdots  & \bigzerol &\ddots &      \\
          & \ast   &  &  &   E_n \\
    \end{array}
    \right).
    \end{align*}
    Notice that there are $m - 1$ copies of $E_n$ in the above description of $R$.
    Note that for any $i$ $(1 \leq i \leq nm)$, the $(n + m + i)$-th column vector of $R$ coincides with the $i$-th column vector of $J \Gamma_{(\wt{x}, \wt{\pi})}$.
    Since $f_1$ is a strongly convex $C^2$ function, we have $\rank H(f_1)_{\wt{x}} = n$ by \cref{thm:hess_strong}.
    Hence, it follows that $\rank R = n + m + nm$.
    Therefore, we obtain \cref{eq:transverse1}.
\end{proof}

%%%%%%%%%%%%%%%%%%%%%%%%%%%%%%%%%%%%%%%%%%%%%%%%%%%%%%%%%%%%%%%%%%%%%%%%%%%%%%%%
\section{Proof of \texorpdfstring{\cref{thm:maingeneric}}{Theorem 4}}\label{sec:maingenericproof}
%%%%%%%%%%%%%%%%%%%%%%%%%%%%%%%%%%%%%%%%%%%%%%%%%%%%%%%%%%%%%%%%%%%%%%%%%%%%%%%%
Since \cref{thm:maingeneric} clearly holds by combining the following result (\cref{thm:transversecoro}) and \cref{thm:main-C2}, in order to show \cref{thm:maingeneric}, it is sufficient to prove \cref{thm:transversecoro}.
\begin{corollary}\label{thm:transversecoro}
    Let $f: \R^n \to \R^m$ $(n \geq m)$ be a strongly convex $C^r$ mapping $(r \geq 2)$.
    Let $s$ be an arbitrary integer satisfying $1 \leq s \leq m$.
    If $n - 2m + 4 > 0$, then there exists a Lebesgue measure zero subset $\Sigma$ of $\mathcal{L}(\R^n, \R^m)_s$ such that for any $\pi \in \mathcal{L}(\R^n, \R^m)_s - \Sigma$ and any $x \in \R^n$, we have $\rank d(f + \pi)_x \geq m - 1$.
\end{corollary}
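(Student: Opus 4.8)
The plan is to reinterpret the rank condition as an avoidance condition for the jet singularity sets $\Sigma^k$ and then feed it into the specialized transversality theorem \cref{thm:transverse} via a codimension count. Since $n \geq m$, we have $v = \min\set{n, m} = m$, so for every $x \in \R^n$ the corank of $d(f + \pi)_x$ equals $m - \rank d(f + \pi)_x$. Consequently, $\rank d(f + \pi)_x \geq m - 1$ for all $x$ if and only if $j^1(f + \pi)(x) \notin \Sigma^k(\R^n, \R^m)$ for every $x \in \R^n$ and every integer $k$ with $2 \leq k \leq m$. If $m = 1$ there is no such $k$, and the conclusion $\rank d(f+\pi)_x \geq 0$ is trivial with $\Sigma = \emptyset$; hence I may assume $m \geq 2$.

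First I would record the relevant codimension. By the formula given in \cref{sec:app} with $U = \R^n$ and $v = m$, we have
\begin{align*}
    \codim \Sigma^k(\R^n, \R^m) = (n - m + k)k.
\end{align*}
Regarded as the function $k^2 + (n - m)k$, this is strictly increasing in $k$ for $k \geq 1$, because $n \geq m$. Thus, for every $k$ with $2 \leq k \leq m$,
\begin{align*}
    \codim \Sigma^k(\R^n, \R^m) \geq \codim \Sigma^2(\R^n, \R^m) = 2n - 2m + 4.
\end{align*}
The hypothesis $n - 2m + 4 > 0$ is precisely the inequality $2n - 2m + 4 > n$, so $\codim \Sigma^k(\R^n, \R^m) > n$ for every such $k$.

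Next I would apply \cref{thm:transverse} for each $k$ with $2 \leq k \leq m$. Because $\codim \Sigma^k(\R^n, \R^m) > n$, we have $\max\set{n - \codim \Sigma^k(\R^n, \R^m), 0} = 0$, and since $r \geq 2 > 1$ the hypothesis $r > \max\set{n - \codim \Sigma^k(\R^n, \R^m), 0} + 1$ of \cref{thm:transverse} holds. Therefore, for each such $k$ there is a Lebesgue measure zero subset $\Sigma_k$ of $\mathcal{L}(\R^n, \R^m)_s$ such that $j^1(f + \pi)$ is transverse to $\Sigma^k(\R^n, \R^m)$ whenever $\pi \in \mathcal{L}(\R^n, \R^m)_s - \Sigma_k$. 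The point I would stress is that transversality to a submanifold of codimension strictly larger than $\dim \R^n = n$ forces the image to miss that submanifold: if $j^1(f + \pi)(x) \in \Sigma^k(\R^n, \R^m)$, then the transversality identity would require the at most $n$-dimensional space $d(j^1(f+\pi))_x(T_x \R^n)$ together with the tangent space of $\Sigma^k(\R^n, \R^m)$ to span the whole tangent space of the jet space, which is impossible since $n < \codim \Sigma^k(\R^n, \R^m)$.

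Finally I would set $\Sigma = \bigcup_{k = 2}^m \Sigma_k$, which is a finite union of measure zero sets and hence has Lebesgue measure zero. For any $\pi \in \mathcal{L}(\R^n, \R^m)_s - \Sigma$ and any $x \in \R^n$, the jet $j^1(f + \pi)(x)$ avoids every $\Sigma^k(\R^n, \R^m)$ with $2 \leq k \leq m$, so $\corank d(f + \pi)_x \leq 1$, that is $\rank d(f + \pi)_x \geq m - 1$, as required. The only delicate point is the codimension bookkeeping, which shows that the single numerical hypothesis $n - 2m + 4 > 0$ simultaneously verifies the hypothesis of \cref{thm:transverse} for all $k \geq 2$ and makes every relevant $\Sigma^k(\R^n, \R^m)$ too high in codimension to be met; the rest is a direct application of the transversality machinery, after which \cref{thm:maingeneric} follows by combining this with \cref{thm:main-C2}.
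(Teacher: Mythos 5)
Your proposal is correct and follows essentially the same route as the paper's proof: reduce to the case $m \geq 2$, verify via the codimension formula and the hypothesis $n - 2m + 4 > 0$ that $\codim \Sigma^k(\R^n,\R^m) > n$ for all $2 \leq k \leq m$, apply \cref{thm:transverse} to each such $k$, take the finite union $\Sigma = \bigcup_{k=2}^m \Sigma_k$, and conclude that transversality to a submanifold of codimension exceeding $n$ forces the jet to avoid it. The only cosmetic difference is that you make the monotonicity of $k \mapsto (n-m+k)k$ explicit where the paper simply asserts the inequality $n - \codim \Sigma^k \leq n - \codim \Sigma^2 < 0$, and you state the avoidance principle up front where the paper phrases it as a concluding contradiction.
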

\begin{proof}[Proof of \cref{thm:transversecoro}]
    In the case $m = 1$, \cref{thm:transversecoro} clearly holds.
    
    Hence, we consider the case $m \geq 2$.
    Since $n \geq m$, we have
    \begin{align*}
        \codim \Sigma^2(\R^n, \R^m) = 2(n - m + 2).
    \end{align*}
    Since $n - 2m + 4 > 0$, we also have $\codim \Sigma^2(\R^n, \R^m) > n$.
    
    Let $k$ be an arbitrary integer satisfying $2 \leq k \leq m$.
    It follows that
    \begin{align}\label{eq:codim}
        n - \codim \Sigma^k(\R^n, \R^m) \leq n - \codim \Sigma^2(\R^n, \R^m) < 0.
    \end{align}
    Furthermore, we have
    \begin{align*}
        r \geq 2 > \max \set{n - \codim \Sigma^k(\R^n, \R^m), 0} + 1.
    \end{align*}
    By \cref{thm:transverse}, there exists a Lebesgue measure zero subset $\Sigma_k$ of $\mathcal{L}(\R^n, \R^m)_s$ such that for any $\pi \in \mathcal{L}(\R^n, \R^m)_s - \Sigma_k$, the mapping $j^1(f + \pi)$ is transverse to $\Sigma^k(\R^n, \R^m)$.
    Set $\Sigma = \bigcup_{k = 2}^m \Sigma_k $.
    Then, $\Sigma$ has Lebesgue measure zero in $\mathcal{L}(\R^n, \R^m)_s$.
    
    Let $\pi \in \mathcal{L}(\R^n, \R^m)_s - \Sigma$ and $x \in \R^n$ be arbitrary elements.
    Suppose $\rank d(f + \pi)_x \leq m - 2$.
    Then, there exists an integer $k$ $(2 \leq k \leq m)$ satisfying
    %\begin{align*}
        $j^1(f + \pi)(x) \in \Sigma^k(\R^n, \R^m)$.
    %\end{align*}
    Since the mapping $j^1(f + \pi)$ is transverse to $\Sigma^k(\R^n, \R^m)$, we obtain
    \begin{align*}
        d(j^1(f + \pi))_x(T_{x} \R^n) + T_{j^1(f + \pi)(x)} \Sigma^k(\R^n, \R^m) = T_{j^1(f + \pi)(x)} J^1(\R^n, \R^m).
    \end{align*}
    This equation implies that
    \begin{align*}
        \dim d(j^1(f + \pi))_x(T_{x} \R^n) \geq \codim \Sigma^k(\R^n, \R^m).
    \end{align*}
    This contradicts \cref{eq:codim}.
\end{proof}

%%%%%%%%%%%%%%%%%%%%%%%%%%%%%%%%%%%%%%%%%%%%%%%%%%%%%%%%%%%%%%%%%%%%%%%%%%%%%%%%
%\appendix 
\section{Appendix}\label{sec:appendix}
\subsection{On \texorpdfstring{\cref{rem:weak}}{Remark 1}}\label{sec:weak}
%As in \cref{def:simplicial}, let $f = (f_1\ld f_m): X \to \R^m$ be a mapping, where $X$ is a subset of $\R^n$.
%In \cite{Hamada2019}, the problem of minimizing $f:X\to \R^m$ is said to be $C^r$ \emph{weakly simplicial} if there exists a $C^r$ mapping $\phi: \Delta^{m - 1} \to f(X^*(f))$ satisfying $\phi(\Delta_I) = f(X^*(f_I))$ for any non-empty subset $I$ of $M$, which is slightly different from the definition in this paper.

%On the other hand, a surjective mapping of $\Delta^{m-1}$ into $X^*(f)$ (for example, $x^*:\Delta^{m-1}\to X^*(f)$) is important to describe $X^*(f)$.
%Hence, in this paper, we adopt the definition of weak simpliciality in \cref{def:simplicial}.
%Under the new definition in this paper, we will show the following result.
As described in \cref{rem:weak}, we show that the problem of minimizing a strongly convex $C^r$ mapping $f:\R^n\to\R^m$ ($2\leq r\leq \infty$) becomes $C^{r-1}$ weakly simplicial in the sense of \cref{def:simplicial} as follows. 
\begin{theorem}\label{thm:weakly_ simplicial}
 Let $f: \R^n \to \R^m$ be a strongly convex $C^r$ mapping, where $2\leq r\leq \infty$.
    Then, the problem of minimizing $f$ is $C^{r-1}$ weakly simplicial.
    %Moreover, this problem is $C^{r-1}$ simplicial if the rank of the differential $df_x$ is equal to $m - 1$ for any $x \in X^*(f)$.
\end{theorem}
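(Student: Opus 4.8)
The plan is to take the very same mapping $x^*:\Delta^{m-1}\to X^*(f)$ defined in \cref{eq:map} as the witness $\phi$ for weak simpliciality, and to upgrade its regularity from $C^0$ to $C^{r-1}$ using the hypothesis $r\geq 2$. The combinatorial requirement $\phi(\Delta_I)=X^*(f_I)$ for every non-empty $I\subset M$ is already in hand: since $f_I$ is again a strongly convex $C^r$ mapping, \cref{thm:homeo}~\cref{thm:homeo1} applied to $f_I$ shows that the restriction $x^*|_{\Delta_I}$ coincides with the corresponding minimizer map for $f_I$ (for $w\in\Delta_I$ one has $\sum_{i=1}^m w_i f_i=\sum_{i\in I} w_i f_i$) and is surjective onto $X^*(f_I)$; hence $x^*(\Delta_I)=X^*(f_I)$. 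Thus the only thing left to establish is that $x^*$ is of class $C^{r-1}$ on $\Delta^{m-1}$.

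For the regularity I would argue via the implicit function theorem. Define the map $G:\R^m\times\R^n\to\R^n$ by $G(w,x)=\sum_{i=1}^m w_i(\nabla f_i)(x)$; it is $C^{r-1}$ jointly in $(w,x)$ because each $f_i$ is $C^r$ and $G$ is affine in $w$. Fix $w_0\in\Delta^{m-1}$. By construction $x^*(w_0)$ is the minimizer of the strongly convex function $\sum_{i=1}^m (w_0)_i f_i$, so $G(w_0,x^*(w_0))=0$. The partial derivative of $G$ with respect to $x$ at $(w_0,x^*(w_0))$ is the Hessian of $\sum_{i=1}^m (w_0)_i f_i$ at $x^*(w_0)$. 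By \cref{thm:weight} this function is strongly convex with convexity parameter $\sum_{i=1}^m (w_0)_i\alpha_i\geq\alpha_0>0$, where $\alpha_0=\min\set{\alpha_1\ld\alpha_m}$ and $\alpha_i$ is a convexity parameter of $f_i$; so \cref{thm:hess_strong} gives that every eigenvalue of this Hessian is at least $\alpha_0$, and in particular the Hessian is invertible there. The implicit function theorem then yields a neighborhood $W_0$ of $w_0$ in $\R^m$ and a $C^{r-1}$ map $\psi:W_0\to\R^n$ with $G(w,\psi(w))=0$, uniquely so locally. Because $x^*$ is continuous (by \cref{thm:homeo}~\cref{thm:homeo1}) and satisfies $G(w,x^*(w))=0$ on $\Delta^{m-1}$, uniqueness forces $x^*=\psi$ on $W_0\cap\Delta^{m-1}$. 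Hence $x^*$ agrees locally with a $C^{r-1}$ map at every point of $\Delta^{m-1}$, so $x^*$ is $C^{r-1}$, and $\phi=x^*$ witnesses $C^{r-1}$ weak simpliciality.

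The step that requires care — and which is the reason the whole argument runs — is that the implicit function theorem must apply at every $w_0\in\Delta^{m-1}$, including the boundary faces and corners where some coordinates $w_i$ vanish. This is exactly where strong convexity is essential rather than mere convexity: \cref{thm:weight} together with \cref{thm:hess_strong} gives the uniform lower bound $\alpha_0$ on the minimal eigenvalue of the combined Hessian for all $w\in\Delta^{m-1}$ and all $x$, so the invertibility needed for the implicit function theorem never degenerates on the boundary. The continuity of $x^*$ is the second ingredient: it is what lets me identify the global minimizer map $x^*$ with the locally defined implicit-function branch $\psi$, and I would invoke it (rather than reprove it) from \cref{thm:homeo}~\cref{thm:homeo1}, which holds for every strongly convex $C^1$ mapping and therefore in particular here. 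Note that, unlike \cref{thm:main-C2}, no rank assumption is needed, since weak simpliciality asks only for surjectivity onto the faces together with $C^{r-1}$ regularity of $\phi$, both of which the argument above supplies.
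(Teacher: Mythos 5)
Your proposal is correct, and it reaches the statement by a more self-contained route than the paper does. The paper's own proof has the same skeleton as yours — take $\phi = x^*$ from \cref{eq:map}, restrict to a face $\Delta_I$, and use that $\sum_{i=1}^m w_i f_i = \sum_{i\in I} w_i f_i$ for $w\in\Delta_I$ to identify $x^*|_{\Delta_I}$ with the minimizer map of the strongly convex $C^r$ mapping $f_I$ — but it then obtains both surjectivity onto $X^*(f_I)$ and the $C^{r-1}$ regularity in a single stroke by citing \cref{thm:Pareto_convex}, a result imported from \cite{Hamada2019}; the whole proof is essentially that citation. You instead prove the regularity from scratch: the implicit function theorem applied to $G(w,x)=\sum_{i=1}^m w_i (\nabla f_i)(x)$, with invertibility of $\partial G/\partial x$ guaranteed uniformly over $\Delta^{m-1}$ (corners included) by \cref{thm:weight} together with \cref{thm:hess_strong}, and you take surjectivity from \cref{thm:homeo}~\cref{thm:homeo1} of this paper. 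This is almost certainly the argument hiding behind the cited proposition, so your proof buys self-containedness and makes explicit where the hypothesis $r\geq 2$ enters (one needs $C^1$ gradient fields and Hessians), at the cost of length. One refinement: the continuity of $x^*$ is not actually needed to identify $x^*$ with the implicit branch $\psi$. For $w\in\Delta^{m-1}$ the zero of $G(w,\cdot)$ is \emph{globally} unique, since a critical point of the strongly convex function $\sum_{i=1}^m w_i f_i$ is its unique minimizer by \cref{thm:chara}; hence $\psi(w)=x^*(w)$ on $W_0\cap\Delta^{m-1}$ outright. If you do route the identification through continuity and the local uniqueness in the implicit function theorem, you should shrink $W_0$ so that $x^*(w)$ lies in the uniqueness neighborhood $V_0$ of $x^*(w_0)$ — a detail your write-up glosses over, though it is harmless either way.
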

In order to show \cref{thm:weakly_ simplicial}, we prepare the following result in \cite{Hamada2019}.
\begin{proposition}[\cite{Hamada2019}]\label{thm:Pareto_convex}
Let $f = (f_1, \dots, f_m): \R^n \to \R^m$ be a strongly convex $C^r$ mapping $(2 \le r \le \infty)$.
Then, $x^*: \Delta^{m - 1} \to X^*(f)$ is a surjective mapping of class $C^{r - 1}$.
\end{proposition}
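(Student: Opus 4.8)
The plan is to establish the two claims separately. Surjectivity is exactly the argument already carried out in the proof of \cref{thm:homeo}~\cref{thm:homeo1}: given an arbitrary $x \in X^*(f)$, \cref{thm:necessary} furnishes $w = (w_1\ld w_m) \in \Delta^{m-1}$ with $d\prn{\sum_{i=1}^m w_i f_i}_x = 0$; since $\sum_{i=1}^m w_i f_i$ is strongly convex by \cref{thm:weight}, \cref{thm:chara} shows that $x$ is its unique minimizer, whence $x^*(w) = x$. This proves that $x^*$ is onto $X^*(f)$.

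For the regularity, I would encode the minimizer as the solution of the first-order optimality equation and invoke the implicit function theorem. Define $G: \Delta^{m-1} \times \R^n \to \R^n$ by
\begin{align*}
    G(w, x) = \sum_{i=1}^m w_i \nabla f_i(x),
\end{align*}
so that $G(w, x^*(w)) = 0$ for every $w$, this being precisely the condition $d\prn{\sum_{i=1}^m w_i f_i}_{x^*(w)} = 0$. Since each $f_i$ is of class $C^r$ and the dependence on $w$ is linear, $G$ is of class $C^{r-1}$ (and $r - 1 \geq 1$ as $r \geq 2$). The partial derivative of $G$ in $x$ at $(w, x)$ is the Hessian of $\sum_{i=1}^m w_i f_i$ at $x$. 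Letting $\alpha_i > 0$ be a convexity parameter of $f_i$, by \cref{thm:weight} the function $\sum_{i=1}^m w_i f_i$ is strongly convex with convexity parameter $\sum_{i=1}^m w_i \alpha_i \geq \min\set{\alpha_1\ld \alpha_m} > 0$, so by \cref{thm:hess_strong} this Hessian has minimal eigenvalue at least $\min\set{\alpha_1\ld \alpha_m}$ and is in particular invertible at every point of $\Delta^{m-1} \times \R^n$.

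I would then fix an arbitrary $\wt{w} \in \Delta^{m-1}$ and parametrize the affine hyperplane $\set{w \in \R^m | \sum_{i=1}^m w_i = 1}$ near $\wt{w}$ by an open subset of $\R^{m-1}$. Applying the implicit function theorem to $G = 0$ at the point $(\wt{w}, x^*(\wt{w}))$ produces a $C^{r-1}$ map $\psi$ on a neighborhood of $\wt{w}$ with $G(w, \psi(w)) = 0$. For $w \in \Delta^{m-1}$ the strongly convex function $\sum_{i=1}^m w_i f_i$ has a unique critical point, namely $x^*(w)$; since $\psi(w)$ is a zero of $G(w, \cdot)$, it follows that $\psi(w) = x^*(w)$ there. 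Hence $x^*$ is $C^{r-1}$ on a neighborhood of each point of $\Delta^{m-1}$, and therefore $C^{r-1}$ on all of $\Delta^{m-1}$.

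The main obstacle I anticipate is the bookkeeping at the boundary of the simplex: $\Delta^{m-1}$ is a manifold with corners, so one must check that the invertibility of the $x$-partial of $G$ survives on faces where some $w_i$ vanish, and that the implicit function theorem delivers $C^{r-1}$ regularity in the sense appropriate to a domain with corners (namely via a $C^{r-1}$ extension to an open neighborhood in the hyperplane, as used in \cref{def:simplicial}). Both points are settled by the observation that the lower bound $\min\set{\alpha_1\ld \alpha_m}$ on the convexity parameter is independent of $w$, so the Hessian remains uniformly positive definite over the entire closed simplex, including its boundary.
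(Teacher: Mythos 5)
Your proposal is correct and takes essentially the expected route: the paper states \cref{thm:Pareto_convex} without proof (citing \cite{Hamada2019}), but your surjectivity argument is verbatim the one the paper uses in the proof of \cref{thm:homeo}~\cref{thm:homeo1}, and your regularity argument---applying the implicit function theorem to $\sum_{i=1}^m w_i \nabla f_i(x) = 0$, where the $x$-partial is invertible because the Hessian of $\sum_{i=1}^m w_i f_i$ has minimal eigenvalue at least $\min\set{\alpha_1\ld \alpha_m}$ by \cref{thm:weight} and \cref{thm:hess_strong}---is the standard proof from the cited source. Your boundary bookkeeping (uniform positive definiteness over the closed simplex, identification of the implicit solution with $x^*$ via uniqueness of critical points of strongly convex functions, and extension to a neighborhood in the ambient hyperplane, matching the paper's definition of $C^{r-1}$ maps on manifolds with corners) is exactly what is needed.
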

\begin{proof}[Proof of \cref{thm:weakly_ simplicial}]
%By the same method as in the proof of \cref{thm:main-C1} by \cref{thm:homeo}, we can show \cref{thm:weak} by \cref{thm:Pareto_convex} as follows.
Let $I=\set{i_1\ld i_k}$ $(i_1<\cdots <i_k)$ be an arbitrary non-empty subset of $M$ as in \cref{sec:intro}.
Since $f_I: \R^n \to \R^k$ is a strongly convex $C^r$ mapping, $x^*|_{\Delta_I}: \Delta_I \to X^*(f_I)$ is a surjective mapping of class $C^{r-1}$ by \cref{thm:Pareto_convex}, where $2\leq r\leq \infty$.
Hence, the problem of minimizing $f$ is $C^{r-1}$ weakly simplicial.
\end{proof}
%, which is a result on weak \cref{thm:main-C2} also holds as follows.
%Since the target space of the mapping $\Phi$ in the definition of simpliciality is the Pareto set, 
\subsection{Proof of \texorpdfstring{\cref{thm:norm_strong}}{Lemma 1}}\label{sec:norm_strong}In order to show \cref{thm:norm_strong}, we prepare the following lemma.
\begin{lemma}\label{thm:norm_strong2}
    For any $t\in\R$ and any $x, y\in \R^n$, we have 
    \begin{align*}
        t\norm{x}^2+(1-t)\norm{y}^2-\norm{tx+(1-t)y}^2=t(1-t)\norm{x-y}^2.
    \end{align*}
\end{lemma}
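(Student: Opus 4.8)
The final statement to prove is \cref{thm:norm_strong2}, which is a purely algebraic identity:
\[
t\norm{x}^2+(1-t)\norm{y}^2-\norm{tx+(1-t)y}^2=t(1-t)\norm{x-y}^2.
\]

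The plan is to expand the middle squared-norm term on the left-hand side using the bilinearity of the inner product and then collect terms. First I would write $\norm{tx+(1-t)y}^2 = t^2\norm{x}^2 + 2t(1-t)\langle x,y\rangle + (1-t)^2\norm{y}^2$, so that the left-hand side becomes
\[
\bigl(t-t^2\bigr)\norm{x}^2 + \bigl((1-t)-(1-t)^2\bigr)\norm{y}^2 - 2t(1-t)\langle x,y\rangle.
\]
Observing that $t-t^2 = t(1-t)$ and $(1-t)-(1-t)^2 = (1-t)\bigl(1-(1-t)\bigr) = t(1-t)$, each coefficient equals $t(1-t)$, so the expression factors as $t(1-t)\bigl(\norm{x}^2 - 2\langle x,y\rangle + \norm{y}^2\bigr)$. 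The parenthesized quantity is exactly $\norm{x-y}^2$, giving the claimed right-hand side.

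There is essentially no obstacle here, since the identity holds for all real $t$ and arbitrary $x,y\in\R^n$ by direct computation; the only point requiring care is the bookkeeping of the coefficients of $\norm{x}^2$ and $\norm{y}^2$, where one must verify that both reduce to the common factor $t(1-t)$. An alternative approach that avoids introducing the inner product explicitly is to treat both sides as polynomials in $t$ with vector-valued coefficients: both are quadratic in $t$, and one can check that they agree by comparing the coefficients of $t^0$, $t^1$, and $t^2$, or equivalently by evaluating at $t=0$, $t=1$, and one further value such as $t=\tfrac12$. I would present the direct expansion, as it is the most transparent and mirrors the standard proof of the parallelogram-type identity underlying strong convexity.
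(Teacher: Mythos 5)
Your proposal is correct and follows essentially the same route as the paper's proof: a direct expansion of $\norm{tx+(1-t)y}^2$ followed by collecting the coefficients of $\norm{x}^2$, $\norm{y}^2$, and the cross term into the common factor $t(1-t)$. The only cosmetic difference is that you expand via the inner product $\langle x,y\rangle$ while the paper carries out the identical computation coordinate-wise with $\sum_{i=1}^n x_i y_i$.
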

\begin{proof}[Proof of \cref{thm:norm_strong2}]
We have
{\small
    \begin{align*}
        t\norm{x}^2+(1-t)\norm{y}^2-\norm{tx+(1-t)y}^2&=t\sum_{i=1}^nx_i^2+(1-t)\sum_{i=1}^ny_i^2-\sum_{i=1}^n(tx_i+(1-t)y_i)^2
        \\
        &=t(1-t)\sum_{i=1}^n(x_i^2+y_i^2-2x_iy_i)
        \\
        &=t(1-t)\norm{x-y}^2,
    \end{align*}
    }where $x=(x_1\ld x_n)$ and $y=(y_1\ld y_n)$.
\end{proof}

Now, we will prove \cref{thm:norm_strong}.
%Let $t\in [0,1]$ and $x, y\in X$ be arbitrary elements. 
A mapping $f: X \to \R$ is strongly convex with a convexity parameter $\alpha > 0$ if and only if for all $t\in [0,1]$ and all $x, y\in X$, we have 
    \begin{align}\label{eq:ap1}
    f(t x + (1 - t) y) &\leq t f(x) + (1 - t) f(y) - \frac{1}{2} \alpha t (1 - t) \norm{x - y}^2.
 %   \\
  %  &\leq t f(x) + (1 - t) f(y) - \frac{1}{2} \alpha\left( t\norm{x}^2+(1-t)\norm{y}^2-\norm{tx+(1-t)y}^2\right).
    \end{align}
By \cref{thm:norm_strong2}, the inequality \cref{eq:ap1} holds for all $t\in [0,1]$ and all $x, y\in X$ if and only if we have 
{\small
    \begin{align}\label{eq:ap2}
    f(t x + (1 - t) y) 
   &\leq t f(x) + (1 - t) f(y) - \frac{1}{2} \alpha\left( t\norm{x}^2+(1-t)\norm{y}^2-\norm{tx+(1-t)y}^2\right),
    \end{align}
    }for all $t\in [0,1]$ and all $x, y\in X$.
 The inequality \cref{eq:ap2} holds for all $t\in [0,1]$ and all $x, y\in X$ if and only if we have 
 {\small
\begin{align}\label{eq:ap3}
    f(t x + (1 - t) y)-\frac{1}{2} \alpha\norm{tx+(1-t)y}^2 \leq t \left(f(x)-\frac{1}{2} \alpha\norm{x}^2\right) +(1-t)\left(f(y)-\frac{1}{2} \alpha\norm{y}^2\right)
    \end{align}
    }for all $t\in [0,1]$ and all $x, y\in X$. 
The inequality \cref{eq:ap3} holds for all $t\in [0,1]$ and all $x, y\in X$ if and only if the function $g: X \to \R$ defined by $g(x) = f(x) - \frac{\alpha}{2} \norm{x}^2$ is convex.\QED

%%%%%%%%%%%%%%%%%%%%%%%%%%%%%%%%%%%%%%%%%%%%%%%%%%%%%%%%%%%%%%%%%%%%%%%%%%%%%%%%
\section*{Acknowledgements}
%%%%%%%%%%%%%%%%%%%%%%%%%%%%%%%%%%%%%%%%%%%%%%%%%%%%%%%%%%%%%%%%%%%%%%%%%%%%%%%%
The authors are most grateful to the anonymous reviewer for his/her careful
reading of the first manuscript of this paper and invaluable suggestions.
They are grateful to \mbox{Kenta~Hayano}, \mbox{Yutaro~Kabata} and \mbox{Hiroshi~Teramoto} for their kind comments.
\mbox{Shunsuke~Ichiki} was supported by JSPS KAKENHI Grant Numbers JP19J00650 and JP17H06128.
This work is based on the discussions at 2018 IMI Joint Use Research Program, Short-term Joint Research ``Multiobjective optimization and singularity theory: Classification of Pareto point singularities'' in Kyushu University.
This work was also supported by the Research Institute for Mathematical Sciences, a Joint Usage/Research Center located in Kyoto University.

%%%%%%%%%%%%%%%%%%%%%%%%%%%%%%%%%%%%%%%%%%%%%%%%%%%%%%%%%%%%%%%%%%%%%%%%%%%%%%%%
\bibliographystyle{plain}
\bibliography{main}
\end{document}